\newcommand{\abs}[1]{\left\vert#1\right\vert}
\newcommand{\Abs}[1]{\big\vert#1\big\vert}
\newcommand{\norm}[1]{\left\|#1\right\|}  
\newcommand{\set}[1]{\left\{ #1 \right\}}
\newcommand{\brak}[1]{\left\langle #1 \right\rangle}
\newcommand{\dd}{\, {\rm d}}
\newcommand{\R}{\ensuremath{{\mathbb R}}}
\newcommand{\weak}{\ensuremath{{\rightharpoonup}}}
\renewcommand{\S}{\ensuremath{{\mathcal S}}}
\newcommand{\N}{\ensuremath{{\mathbb N}}}
\DeclareMathOperator{\eps}{\varepsilon}
\DeclareMathOperator{\embeds}{\hookrightarrow}
\newcommand{\beq}{\begin{equation}}
\newcommand{\eeq}{\end{equation}}
\newcommand{\beqs}{\begin{equation*}}
\newcommand{\eeqs}{\end{equation*}}
\newcommand{\bal}{\begin{equation}\begin{aligned}}
\newcommand{\eal}{\end{aligned}\end{equation}}
\newcommand{\bals}{\begin{equation*}\begin{aligned}}
\newcommand{\eals}{\end{aligned}\end{equation*}}
\newcounter{num} \numberwithin{num}{section}
\newtheorem{theorem}[num]{Theorem}
\newtheorem{proposition}[num]{Proposition}
\newtheorem{lemma}[num]{Lemma}
\newtheorem{corollary}[num]{Corollary}
\theoremstyle{definition}
\theoremstyle{remark}
\newtheorem{remark}[num]{Remark}
\numberwithin{equation}{section}
\author{William Golding, Maria Pia Gualdani, Am\'elie Loher}
\title[Nonlinear regularization estimates]{Nonlinear regularization estimates and global well-posedness for the Landau-Coulomb equation near equilibrium}
\date{\today}							
\address[William Golding]{\newline Department of Mathematics, \newline The University of Texas at Austin, Austin, TX 78712, USA}
\email{wgolding@utexas.edu}
\address[Maria Pia Gualdani]{\newline Department of Mathematics, \newline The University of Texas at Austin, Austin, TX 78712, USA}
\email{gualdani@math.utexas.edu}
\address[Am\'{e}lie Loher]{\newline Department of Pure Mathematics and Mathematical Statistics, \newline University of Cambridge, Cambridge, CB3 0WA, United Kingdom}
\email{ajl221@cam.ac.uk}
\subjclass[2020]{35BXX, 35K59, 35K55, 35P15, 35Q84, 82C40, 82D10}
\keywords{Landau-Coulomb equation, kinetic theory, small data, De Giorgi method, global in time existence, uniqueness.}
\thanks{\textbf{Acknowledgements:} W. Golding is partially supported by NSF grant DMS 1840314. M. Gualdani is partially supported by DMS-2206677 and DMS-1514761. A. Loher is funded by the Cambridge Trust and Newnham College scholarship. The authors would like to thank the Isaac Netwon Institute at the University of Cambridge for their kind hospitality.}
\begin{document}

\begin{abstract}
 We consider the Landau equation with Coulomb potential in the spatially homogeneous case.  We show short time propagation of smallness in $L^p$ norms for $p>3/2$ and instantaneous regularization in Sobolev spaces. This yields new short time quantitative a priori estimates that are unconditional near equilibrium. We combine these estimates with existing literature on global well-posedness for regular data to extend the well-posedness theory to small $L^p$ data with $p$ arbitrarily close to $3/2$. The threshold $p = 3/2$ agrees with previous work on conditional regularity for the Landau equation in the far from equilibrium regime. 
 
In light of the monotonicity of the Fisher information shown in the recent preprint [arXiv:2311.09420], our primary nonlinear regularization estimate holds even in the far-from-equilibrium regime. As a consequence, we obtain exponential convergence to equilibrium for suitably localized solutions in every Sobolev norm.
 
\end{abstract}

\maketitle

\tableofcontents

\section{Introduction}
The Landau-Coulomb equation is a fundamental model in plasma physics, describing the statistical behavior of particles in a collisional plasma. Despite its widespread use, the mathematical understanding of this equation has been limited, particularly with regards to the existence of global-in-time smooth solutions. In this paper, we consider the spatially homogeneous version of the Landau-Coulomb, written as
\begin{equation}
	\partial_t f(t, v) = \mathcal{Q}(f,f)(t, v) \qquad (t, v) \in (0, \infty) \times \R^3,
\label{eq:landau1}
\end{equation}
where $f:\R^+ \times \R^3 \rightarrow \R^+$ is the unknown distribution and $\mathcal Q$ is the collision operator given by 
\begin{equation}\label{eq:q}
	\mathcal Q(f,f)(v) := \frac{1}{8\pi}\nabla_v \cdot \left(\int_{\R^3} \frac{\Pi(v - v_*)}{\abs{v-v_*}} \left\{f_*\nabla f - f \nabla f_*\right\}\dd v_*\right), \qquad \Pi(z) := \rm{Id} - \frac{z \otimes z}{\abs z^2},
\end{equation}
with the standard notation $f = f(v), ~ f_* = f(v_*)$. While \eqref{eq:landau1} is not as physically relevant as the full inhomogeneous model, the study of \eqref{eq:landau1} already provides several mathematical challenges, but is more tractable. Indeed, \eqref{eq:landau1} may be rewritten as the following quasi-linear, divergence-form parabolic equation with non-local coefficients:
\begin{equation}
	\partial_t f(t, v) = \nabla \cdot \left(A[f]\nabla f - \nabla a[f] f\right),
\label{eq:landau}
\end{equation}
where the coefficients $A$ and $a$ are given by 
\begin{equation}\label{eq:coefficients}
A[f] = \frac{\Pi(v)}{8\pi |v|} \ast f, \qquad a[f] = (-\Delta)^{-1}f = \frac{1}{4\pi |v|} \ast f.
\end{equation}
It is also possible to rewrite \eqref{eq:landau} in non-divergence form as:
\begin{equation*}
\partial_t f(t,v) = A[f]:\nabla^2 f + f^2,
\end{equation*}
which highlights the main challenge in studying this equation, namely the competition between the reaction term, which is known to cause blow-up, and the nonlinear, non-local diffusion term. 

Complete existence theory for solutions to \eqref{eq:landau1} has been established only for weak solutions. In \cite{VillaniHsols}, Villani constructed global-in-time very weak solutions (often referred to as H-solutions) to \eqref{eq:landau1} for general initial datum. Subsequently, in \cite{desvillettesHsols}, Desvillettes used propagation of $L^1$ moments to clarify that Villani's solutions satisfy \eqref{eq:landau1} in a standard weak sense. The moment estimates were then improved in \cite{DCH} and used to provide quantitative rates of decay to equilibrium in $L^1$. Notably, the question of uniqueness of these solutions remains open, even for regular initial datum.

The uniqueness of weak solutions with bounded mass, momentum, energy, and entropy which further lie in $L^1(0,T;L^\infty)$ was established by Fournier in \cite{Fournier} using probabilistic methods. On the other hand, the authors of \cite{ChernGualdani} relaxed the boundedness assumption to $L^\infty(0,T;L^p)$ for $p > 3/2$, assuming the initial datum decays sufficiently fast. However, these results do not necessarily apply to the global solutions constructed by Villani, as it is currently unknown whether general weak solutions belong to either integrability class. 

Few unconditional regularity results exist for general weak solutions of \eqref{eq:landau}. The authors of \cite{PartialRegularity1} showed partial regularity in time through the use of a De Giorgi-type iteration, which takes advantage of the control of entropy to determine a dimensional bound on the set of singular times. The results were then extended in \cite{PartialRegularity2} to provide a bound on the dimension of the set of singular points in space-time. In a separate work, \cite{DHJ} demonstrated the existence of a monotone functional that leads to the eventual smoothness of general weak solutions. If the initial value is  close to the equilibrium in weighted $H^1$ space, the authors were able to show global-in-time well-posedness. Furthermore, the article \cite{KleberMischler} discusses a global-in-time existence result for the inhomogeneous Landau equation, assuming closeness to the equilibrium in a weighted $H^1_xL^2_v$-space. The proof in \cite{KleberMischler} uses spectral methods, which are significantly more complicated for general $L^p$ spaces when $p \neq 2$. We would also like to mention the related result in \cite{KimGuoHwang}: there, the authors  proved that smooth solutions for inhomogeneous Landau-Coulomb  equation exist globally in time for initial data close to equilibrium in a weighted $L^\infty$ sense. The method relies upon pointwise estimates using a De Giorgi iteration to close their estimates. 

While unconditional regularity results remain elusive, various conditional results have been established based on classical elliptic-parabolic iteration methods that highlight the smoothing effects of the diffusion in \eqref{eq:landau}. In \cite{GualdaniGuillen1}, the second author and Guillen used barrier arguments to obtain pointwise bounds for radially symmetric solutions, conditional to the $L^\infty(0,T; L^p)$ norm with $p > 3/2$. This is a strong enough estimate to build global-in-time solutions to a related model, the isotropic Landau equation introduced in \cite{KriegerStrain} (see also \cite{GressmanKriegerStrain}), but not for \eqref{eq:landau}. In \cite{Silvestre}, Silvestre treated \eqref{eq:landau} as a non-divergence form parabolic equation and applied arguments inspired by Krylov-Safanov to deduce a priori pointwise estimates for smooth solutions to \eqref{eq:landau}. This provides unconditional bounds in the case of moderately soft potentials, but only provides bounds conditional on a weighted $L^\infty(0,T;L^p)$-norm for $p > 3/2$ of the solution for Coulomb potential. In a subsequent work, \cite{GualdaniGuillen2}, the authors used the theory of $A_p$ weights and a Moser-type iteration to obtain pointwise bounds for weak solutions, provided solutions belong to $L^\infty(0,T;L^p)$-norm for $p > 3/2$. 

\medskip

The purpose of this paper is to show unconditional regularization effects and pointwise bounds when initial data are close to equilibrium in $L^p$ with $p$ arbitrarily close to $3/2$. Such unconditional, quantitative estimates are strong enough to prove existence of  global in time classical solutions. This provides the first proof of global well-posedness near equilibrium for initial data in low $L^p$ spaces and without smallness in weighted norms. Previous results require initial data near equilibrium in a weighted $L^2$ space \cite{KleberMischler}, a weighted $H^1$ space \cite{DHJ}, or a weighted $L^\infty$ space \cite{KimGuoHwang}. The restriction $p > 3/2$ is essential to our arguments, which hinge upon short-time smoothing estimates. Our smoothing estimates hold when the diffusion dominates on short time scales. This occurs in \emph{subcritical} norms; the coefficient $A[f]$ in \eqref{eq:coefficients} is bounded for $f \in L^1 \cap L^p$ with $p> \frac{3}{2}$.
Moreover, by Gagliardo-Sobolev-Nirenberg inequality, the reaction term for $g\ge 0$ can be bounded as 
\begin{equation*}
\begin{aligned}
\mathrm{REACTION} = \int_{\R^3} g^{p+1} \dd v \lesssim   \left(\int_{\R^3} g^{3/2} \dd v\right)^{2/3} \int_{\R^3} \abs{\nabla g^{p/2}}^2 \dd v = \norm{g}_{L^{3/2}}  \times \mathrm{``DIFFUSION"}.
\end{aligned}
\end{equation*}
This shows that the diffusion will control the reaction if $\norm{g}_{L^{3/2}}$ is small. We will derive an inequality similar to the preceding one for $g = f-\mu$ with the Landau diffusion term (which involves weights), and use it to quantitatively track the time evolution of the $L^p$ norm for a short time. An iteration argument inspired by the De Giorgi method will yield smoothing estimates. Qualitatively, this implies that if the initial data is small in some $L^p$ spaces ($p>3/2$), the corresponding solution will be regular and small (in all Sobolev norms) after some time passes. Uniqueness will allow us to piece together the local-in-time solution constructed here with the global-in time solution for small smooth data constructed in \cite{DHJ}, or alternatively in \cite{KleberMischler} or \cite{KimGuoHwang}. This yields a single global in time solution for small $L^p$ data with $p$ arbitrarily close to $3/2$. Let us also stress that we require smallness only for $f$, not for moments of $f$.

Before presenting our main result, we first establish some notation and revisit the some basic properties of \eqref{eq:landau1}.
We use the bracket convention $\langle v\rangle := (1+\abs{v}^2)^{\frac{1}{2}}$ and define the $L^p_m$ norm for $p \in [1, \infty)$ and $m \in \R$, as follows:
\begin{equation*}
    \norm{f}_{L^p_m(\R^3)}^p :=  \int_{\R^3} \abs{f(v)}^p\langle v\rangle^{m} \dd v, \qquad \norm{f}_{L^\infty_m(\R^3)} := \sup_{v \in \R^3} \abs{f(v)}\langle v \rangle^m.
\end{equation*}
We define 
the weighted Sobolev norm $H^k_m(\R^3)$  as
\begin{equation*}
    \norm{f}_{H^k_m(\R^3)}^2 := \sum_{\abs{\alpha} \leq k} \int_{\R^3} \Abs{\partial^\alpha\big(\langle v\rangle^{\frac{m}{2}} f\big)}^2 \dd v.
\end{equation*}
Next, we recall that \eqref{eq:landau} formally conserves mass, momentum, and energy:
\begin{equation*}
    \frac{\dd}{\dd t}\int_{\R^3} \begin{pmatrix} 1 \\ v \\ |v|^2 \end{pmatrix} f(t,v) \;\dd v = 0.
\end{equation*}
Moreover, solutions to \eqref{eq:landau} have decreasing Boltzmann entropy:
\begin{equation*}
    \frac{\dd}{\dd t}\int_{\R^3} f \log(f) \;\dd v \le 0.
\end{equation*}
Consequently, the steady states of \eqref{eq:landau} may be characterized explicitly: $f \in L^1_2$ is a steady state of \eqref{eq:landau} if and only if $f$ is a Maxwellian distribution, given by
\beqs
    \mu_{\rho, u, \theta}(v) = \frac{\rho}{(2\theta\pi)^{3/2}}e^{-\frac{\abs{v-u}^2}{2\theta}},
\eeqs
where $\rho \geq 0$ is the mass, $u \in \R^3$ is the mean velocity, and $\theta >0$ is the temperature of the plasma. These parameters are constant for any reasonable solution of \eqref{eq:landau} due to conservation of mass, momentum, and energy. Further, because \eqref{eq:landau} is translation invariant and enjoys a two-parameter scaling invariance, we will assume that our initial datum $f_0(v)$ satisfies the normalization
\begin{equation}\label{eq:normalization}
    \int_{\R^3} \begin{pmatrix} 1 \\ v \\ |v|^2\end{pmatrix} f_0(v) \;\dd v = \begin{pmatrix} 1 \\ 0 \\ 3\end{pmatrix},
\end{equation}
which fixes the corresponding Maxwellian with same mass, momentum and energy as
\begin{equation*}
    \mu(v) := (2\pi)^{-\frac{3}{2}}e^{-\frac{\abs{v}^2}{2}}.
\end{equation*}
We now present the main result: 
\begin{theorem}\label{thm:existence}
Fix $p > \frac{3}{2}$ and let $m$ be any positive number greater than 
$$
\max\left(\frac{9}{2}\frac{p-1}{p - \frac{3}{2}}\max\left(1,\frac{p(p- 3/2)}{p^2 - 2p + 3/2}\right),55\right).
$$
For any given $M$, $H$ and $f_{in} \in L^1_m 
\cap L^p(\R^3)$ satisfying 
 \begin{equation*}
        \norm{f_{in}}_{L^1_m} \le M \qquad \text{and} \qquad \int_{\R^3} f_{in} \abs{\log(f_{in})} \;dv \le H, 
    \end{equation*}
there exists $\delta = \delta(p,m, M,H) > 0$ such that if 
$$
 \int_{\R^3} \abs{f_{in} - \mu}^p \le \delta,
$$
equation \eqref{eq:landau} admits a unique global classical solution on $(0,\infty) \times \R^3$ with initial datum $f_{in}$. This solution satisfies, for $t>0$, the smoothing estimate
\begin{equation}\label{inst_reg}
\|f(t)\|_{L^\infty} \le C(p,m,M,H)\left(1 + t^{-\frac{1}{1 + \gamma}}\right),
\end{equation}
for some $\gamma >0$ that only depends on $p$ and $m$:
\beqs
    \gamma := \frac{2(p-3/2)}{3m}\left[m - \frac{9}{2}\frac{p-1}{p-3/2}\right] \in \left(0, \frac{2}{3}\left(p - \frac{3}{2}\right)\right). 
\eeqs
\end{theorem}
Theorem \ref{thm:existence} provides further evidence that all $L^p$ norms for $p > 3/2$ are sub-critical in the sense that diffusive effects dominate. Combined with the conditional regularity results in \cite{Silvestre, GualdaniGuillen1, GualdaniGuillen2} mentioned above, this emphasizes the role of the $L^{3/2}$ norm as a sort of critical norm for the equation. If one relies on the analogy to semi-linear parabolic problems, one could expect blow-up in $L^p$ norms for $1 < p < 3/2$, since the semi-linear heat equation is known to cause blow-up in finite time for super-critical norms (e.g. $L^p$ with $p < 3/2$), see \cite{GigaKohn}. Finite time blow-up in $L^p$ norms for $1 < p < 3/2$, however, seems unrealistic for \eqref{eq:landau}, since the $L^1$ norm is conserved. Theorem \ref{thm:existence} also clarifies when the analogy of \eqref{eq:landau} to semi-linear equations remains valid and provides useful intuition. On the other hand, we believe that Theorem  \ref{thm:existence} should be true also for $p$ arbitrarily close to one, but the proof seems at the moment out of reach.


Since we are in the sub-critical regime, heuristically, the solution obtained in Theorem \ref{thm:existence} is expected to behave similarly to solutions to the heat equation, with a correction that takes into account the degeneracy of the diffusion coefficient $A[f]$ at large velocities. In fact, ignoring the $m$ dependence of the constant $C$ and letting $m \to \infty$ in the exponent $\gamma$, the smoothing estimate in (\ref{inst_reg}) approaches the well-known $L^p-L^\infty$ estimate for the heat equation on $\R^3$:
\begin{equation*}
    \norm{e^{t\Delta}f_{in}}_{L^\infty} \le C(p)t^{-\frac{3}{2p}}\|f_{in}\|_{L^p}.
\end{equation*}
The nonlinear dependence of the small parameter $\delta$ from the size of the initial data in  $L^1_m$ and $L \log L$ should be interpreted as follows:  if a sequence of initial data $f_{in}^k$ converges to $\mu$ in $L^p$ and remains equi-bounded in $L^1_m$ and $L \log L$, then for sufficiently large $k$, these initial data will admit global classical solutions according to Theorem \ref{thm:existence}.

Unlike the results of \cite{DHJ}, \cite{KimGuoHwang} and \cite{KleberMischler}, Theorem \ref{thm:existence} does not require smallness for the moments of the initial data. This distinction is subtle but non-trivial. Our method uses weighted spaces only to quantify the strength of the diffusion in our energy estimates, not to bound our energy functional. This is possible thanks to several weighted Sobolev inequalities combined with moment estimates.

After the completion of this manuscript, a breakthrough result appeared in \cite{GuillenSilvestre}, where the authors show monotonicity of the Fisher information for smooth solutions to \eqref{eq:landau} with Gaussian decay at infinity. Since the Fisher information directly controls the $L^3$ norm, a version of the non-linear regularization estimate in Theorem \ref{thm:existence} for $p \leq 3$ applies even in the far-from-equilibrium regime and uniformly in time. Indeed, combining the $p = 2$ regularization estimate of Proposition \ref{prop:degiorgi} with the monotonicity of the Fisher information in \cite[Theorem 1.2]{GuillenSilvestre} and the convergence to equilibrium in weighted $L^1$ norms \cite[Theorem 2]{DCH}, we are able to obtain long-time asymptotics for a large class of solutions in strong norms:
\begin{corollary}\label{cor:long_time}
    Let $f_{in}$ satisfy the conditions of \cite[Theorem 1.2]{GuillenSilvestre}; in particular, $f_{in} \in C^1$, has bounded Fisher information and is bounded above by a Maxwellian. Then, there is a constant $C_1 > 0$ depending only on the Fisher information of the initial data such that for any $1 < t$ and $m > 9$,
    \begin{equation}\label{eq:long_time1}
        \norm{f(t)-\mu}_{L^\infty} \leq C_1 \norm{f(t)-\mu}_{L^1_m}^{\frac{9}{7m-9}}.
    \end{equation}
    In particular, for some constants $C_2,\,\lambda_0 > 0$ depending only on the initial Fisher information and initial $L^1(\exp(\frac{1}{2}\brak{v}^{1/2})\dd v)$ norm, 
    \begin{equation}\label{eq:long_time2}
        \norm{f(t)-\mu}_{L^\infty} \leq C_2 \exp\left(- \lambda_0 \frac{t^{1/7}}{(\log(1+t))^{6/7}}\right).
    \end{equation}
\end{corollary}

We remark that the uniform bound on the Fisher information of $f$ implies that the $L^3$ norm of $f - \mu$ is uniformly bounded in time, but not necessarily small. Therefore, while the ODE argument used in Lemma \ref{lem:perthame} does not directly apply, our standard $L^2$ energy estimate used frequently below yields
\begin{equation*}
    \sup_{T-1 < t < T} \int_{\R^3} f^2(t) \dd v + \int_{T-1}^T\int_{\R^3} \brak{v}^{-3} \abs{\nabla f}^2 \dd v\dd t \le C\int_{T-1}^T\int_{\R^3} f^3 \dd v,
\end{equation*}
where the right hand side is bounded uniformly in time by the Fisher information of $f$. Consequently, the same quantity is bounded uniformly in time for $h = f - \mu$, which bounds the functional $E_0$ appearing in the estimate in Proposition \ref{prop:degiorgi}. Therefore, applying Proposition \ref{prop:degiorgi} with $p = 2$ and $m > 9$ on the time interval $[T-1,T]$---which can be done using translation invariance in time---one obtains the long-time bound \eqref{eq:long_time1}, where only moments appear on the right hand side. Applying the asymptotic behavior of the $L^1$ norm shown in \cite[Theorem 2]{DCH}, one deduces \eqref{eq:long_time2}. Using standard $L^2$ estimates and the asymptotic $L^\infty$ bound in \eqref{eq:long_time2}, the convergence also holds in $H^k_m$ for all $k \ge 0$ and all $m \ge 0$, with the same rate (see Lemma \ref{lem:higher_regularity} for the $k = 1$ case, which is easily generalized). Many variants of Corollary \ref{cor:long_time} are possible by applying the various convergence to equilibrium results present in the literature (e.g. \cite{KleberMischler} and the references therein).

\medskip

Let us state the precise definition of solution found in Theorem \ref{thm:existence}: the function $f:\R^+ \times \R^3 \rightarrow \R^+$ is a global solution to \eqref{eq:landau} with initial datum $f_{in}$ in the following sense:
\begin{itemize}
    \item For each $T > 0$, $f \in C(0,T; L^1 \cap L^p) \cap L^\infty(0,T; L^1_m) \cap L^1(0,T;L^\infty)$;
    \item For each $0 < t < T$, $f\in L^\infty(t,T;L^\infty \cap H^1_2)$;
    \item This $f$ solves \eqref{eq:landau} in the sense of distributions on $[0,\infty)\times \R^3$, i.e. for each  $\varphi \in C^\infty_c([0,\infty)\times \R^3)$,
    \begin{equation*}
        \int_{\R^3} f_{in}\varphi(0) \dd v - \int_0^\infty\int_{\R^3} f\partial_t \varphi \dd v \dd t = \int_0^\infty\int_{\R^3}\nabla^2 \varphi : A[f]f + 2\nabla \varphi \cdot \nabla a[f]f \dd v\dd t.
    \end{equation*}
    \item This $f$ is a classical solution of \eqref{eq:landau} on $(0,\infty) \times \R^3$ and satisfies the short time smoothing estimate (\ref{inst_reg}).
\end{itemize}

The nonlinear nature of \eqref{eq:landau} and the low regularity of the initial data $f_{in}$ cause some minor difficulties in stating precisely what notion of solution holds up to time $0$. To illustrate the main ideas of Theorem \ref{thm:existence} and provide the main step of its proof, we isolate the result of Theorem \ref{thm:existence} for Schwartz class initial data in the following proposition:
\begin{proposition}\label{prop:smooth_data}
    Fix $p$, $m$, $H$, $M$, and $f_{in}$ as in Theorem \ref{thm:existence} and suppose additionally that $f_{in} \in \S(\R^3)$. Then, there is an $\delta = \delta(p,m,H,M) > 0$ such that $\|f_{in} -\mu\|_{L^p} < \delta$ implies there is a unique global-in-time solution $f:\R^+ \times \R^3 \rightarrow \R^+$ to \eqref{eq:landau} satisfying $f\in C([0,\infty);L^p)$. Furthermore, $f$ is regular in the sense that $f \in C^\infty(\R^+;\S(\R^3))$.
\end{proposition}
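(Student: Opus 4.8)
The plan is to run a continuity/bootstrap argument on the quantity $u := f - \mu$, exploiting that $\mu$ is an exact steady state and that the Landau operator linearized about $\mu$ has good smoothing properties. First I would set up local well-posedness: for Schwartz initial data close to $\mu$ in $L^p$, standard parabolic theory for the quasilinear equation \eqref{eq:landau} (the coefficient $A[f]$ is uniformly elliptic as long as $f$ has bounded mass, entropy and a moment, and is smooth when $f$ is) gives a unique maximal smooth solution $f\in C^\infty((0,T_{max});\S(\R^3))$ with $f\in C([0,T_{max});L^p)$, propagating the normalization \eqref{eq:normalization} and the bounds $\|f\|_{L^1_m}\le 2M$, $\int f|\log f|\le 2H$ on a short time interval by continuity. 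The heart of the matter is to show $T_{max}=\infty$ and that closeness to $\mu$ in $L^p$ persists. To that end I would introduce the bootstrap hypothesis: on $[0,T]$, $\sup_t\|f(t)-\mu\|_{L^p}\le \eta$ for a small $\eta$ to be fixed (with $\delta\ll\eta$), together with the a priori moment and entropy bounds above. The short-time smoothing estimate \eqref{inst_reg} — which is exactly the kind of estimate the paper advertises as its core tool and which should be available as a proposition earlier in the text — then furnishes, conditionally on these bounds, a quantitative $L^p\to L^\infty$ bound $\|f(t)\|_{L^\infty}\le C(1+t^{-1/(1+\gamma)})$ on $[0,T]$ with $C$ independent of $T$.

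Next I would upgrade this to an improved $L^p$-difference estimate that closes the bootstrap. Writing the equation for $u=f-\mu$ in divergence form,
\begin{equation*}
\partial_t u = \nabla\cdot\big(A[f]\nabla u - \nabla a[f]\,u\big) + \nabla\cdot\big(A[u]\nabla\mu - \nabla a[u]\,\mu\big),
\end{equation*}
I test against $p\,|u|^{p-2}u$ and integrate. The first (linear-in-$u$, variable-coefficient) block produces a good dissipation term $-c\int A[f]\nabla|u|^{p/2}\cdot\nabla|u|^{p/2}$ plus lower-order terms controlled using $\|a[f]\|$ and the $L^\infty$ bound on $f$; the degeneracy of $A[f]$ at large $v$ is handled by the weighted moment bound $f\in L^1_m$, which is precisely why $m$ must be taken large and why $\gamma>0$ appears. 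The inhomogeneous block is linear in $u$ with Schwartz coefficients coming from $\mu$, so it contributes a term bounded by $C\|u\|_{L^p}^p$ (or better, using the spectral gap of the linearized operator, a strictly dissipative contribution). Altogether one obtains a differential inequality of the form $\frac{d}{dt}\|u\|_{L^p}^p \le -\lambda\|u\|_{L^p}^p + (\text{controlled error})$ — or at worst $\le C(t)\|u\|_{L^p}^p$ with $\int_0^\infty C(t)\,dt<\infty$ thanks to \eqref{inst_reg} and $\gamma>0$ making $t^{-1/(1+\gamma)}$ integrable near $0$ — and Grönwall then gives $\sup_{[0,T]}\|u(t)\|_{L^p}\le C_0\delta$ with $C_0$ independent of $T$. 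Choosing $\delta$ small enough that $C_0\delta<\eta/2$ strictly improves the bootstrap hypothesis; similarly the moment and entropy bounds are propagated (moments via the classical homogeneous Landau moment estimates, entropy via its monotonicity). By continuity of $t\mapsto\|f(t)-\mu\|_{L^p}$ and a standard open–closed argument, the bootstrap set is all of $[0,T_{max})$, and since all norms stay bounded the local solution extends, forcing $T_{max}=\infty$.

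The main obstacle I anticipate is the dissipation/degeneracy bookkeeping in the $L^p$ energy estimate: controlling the terms involving $\nabla a[f]\,u$ and the non-local, velocity-degenerate $A[f]$ uniformly in time requires that the gain from the (weighted) diffusion beats the reaction-type contribution $\sim\int f u|u|^{p-2}u$, which is exactly where subcriticality $p>3/2$ and the large-$m$ hypothesis enter, and where the precise exponent $\gamma$ in \eqref{inst_reg} must be tracked carefully; a convenient route is to first prove the conditional $L^\infty$ bound via the De Giorgi iteration underlying \eqref{inst_reg}, and only then feed it back into the $L^p$ estimate so that the reaction term is handled by Hölder as $\|f\|_{L^\infty}\|u\|_{L^p}^p$ with an integrable-in-time prefactor. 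A secondary, more technical point is justifying the testing and integration by parts rigorously — this is where the Schwartz hypothesis on $f_{in}$ (hence Schwartz regularity of $f(t)$ for $t>0$, and rapid decay allowing all weighted integrals to converge) removes the need for the approximation/truncation machinery that Theorem \ref{thm:existence} will require in the general $L^1_m\cap L^p$ case.
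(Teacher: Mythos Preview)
Your bootstrap scheme has a genuine gap at the step where you claim the $L^p$ differential inequality closes for all time. The estimate \eqref{inst_reg} reads $\|f(t)\|_{L^\infty}\le C(1+t^{-1/(1+\gamma)})$; this is integrable as $t\to 0^+$, but it does \emph{not} decay as $t\to\infty$, so the prefactor $C(t)\sim\|f(t)\|_{L^\infty}$ in your Gr\"onwall inequality satisfies $\int_0^\infty C(t)\,dt=\infty$, and you only recover exponential growth of $\|u\|_{L^p}$. Worse, the inhomogeneous block you single out, $\int |u|^{p-2}u\,(A[u]:\nabla^2\mu)$, produces after H\"older and the bound $\|A[u]\|_{L^\infty}\lesssim\|u\|_{L^p}^{p/(3(p-1))}$ a \emph{sublinear} term $C\|u\|_{L^p}^{p\alpha}$ with $\alpha=1-\tfrac{1}{p}+\tfrac{1}{3(p-1)}\in(0,1)$; this term is larger than $\|u\|_{L^p}^p$ when $u$ is small and cannot be absorbed by any putative spectral gap in flat $L^p$. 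The resulting ODE $y'\lesssim y+y^\alpha$ forces $y$ to leave any prescribed smallness level after time of order $\eps^{1-\alpha}$, so the bootstrap interval is intrinsically short. Your aside about the linearized spectral gap does not rescue this: the known coercivity is in weighted $L^2(\mu^{-1})$-type spaces, not in flat $L^p$, and establishing an $L^p$ gap strong enough to beat the sublinear forcing would be a separate, nontrivial result.

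The paper resolves this by abandoning the global bootstrap entirely. It proves the $L^p$ propagation only on a short interval $[0,T_0(\eps)]$ with $T_0\gtrsim\eps^{1-\alpha}$ (Lemma~\ref{lem:perthame}); this already forces $T^*>T_0$ via the De Giorgi bound (Proposition~\ref{prop:degiorgi}). The crucial extra step is that, because $\alpha+\beta_1>1$ under the stated constraint on $m$, the De Giorgi estimate evaluated on $[T_0/4,T_0]$ yields $\|h(T_0/2)\|_{L^\infty}\to 0$ as $\eps\to 0$. A separate weighted $H^1$ energy estimate (Lemma~\ref{lem:higher_regularity}) then upgrades this to smallness of $\|h(T_0/2)\|_{L^2_1}+\|\nabla h(T_0/2)\|_{L^2_2}$, which is exactly the hypothesis of the Desvillettes--He--Jiang global result (Theorem~\ref{thm:desvillettes_he_jiang}). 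That theorem furnishes the global continuation from time $T_0/2$; Fournier's uniqueness (Theorem~\ref{thm:Fournier}) glues it to the short-time solution. In short, the long-time part is outsourced to an existing $H^1$-based Lyapunov functional rather than obtained from any flat-$L^p$ mechanism.
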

For $h := f - \mu$ a Schwartz class solution, we show that  $h(t,v)$ is small in $L^\infty$ after some time $t_0$, where $t_0$ is explicitly computed. The method used here is a variation of an ODE argument introduced by Corrias and Perthame in \cite{PC} for the Keller-Segel model. Higher regularity estimates then imply the smallness of $h(t)$ in a weighted Sobolev norm for $t\in (t_0,3t_0)$. This quantitative control over higher regularity of $h$ enables us to apply a result from \cite{DHJ} (summarized in Theorem \ref{thm:desvillettes_he_jiang} below) to conclude that there is a global solution to \eqref{eq:landau} with initial datum $f(2t_0)$.   The uniqueness result of \cite{Fournier}  (or \cite{ChernGualdani}) allows us to piece together the local-in time solution constructed in the time interval $(0,2t_0)$ with the global solution on $(2t_0, +\infty)$ to obtain a single global-in-time solution with initial datum $f_{in}$. Finally,  we remove the Schwartz class assumption using a compactness argument and obtain Theorem \ref{thm:existence}. 

The manuscript is organized as follows: 
In Section \ref{sec:prelims}, we list known results that are central to our proof.  In Section \ref{sec:loc}, we analyze the evolution of $f - \mu$ in $L^p$ and show that the $L^p$ norm \emph{remains small} for a short time.  In Section \ref{sec:DeGiorgi}, we use the De Giorgi method to prove short time smoothing estimates. Qualitatively, this implies that if the initial datum has small $L^p$ norm, the solution will have small $L^\infty$ norm after some time passes.  In Section \ref{sec:smooth_data}, we prove Proposition \ref{prop:smooth_data}, which concludes the argument for Schwartz class initial data.  In Section \ref{sec:compactness}, we deduce Theorem \ref{thm:existence} from Proposition \ref{prop:smooth_data} using a compactness argument  to remove the Schwartz class assumption on the initial data.

\section{Preliminaries}
\label{sec:prelims}
\subsection{Preliminary results}
We collect here known results that are used without proof. We begin with a local-in-time well-posedness result of Henderson, Snelson, and Tarfulea \cite{HendersonSnelsonTarfulea}. We present the result in a simplified form, suitable for our purposes:
\begin{theorem}[\protect{\cite[Theorem 1.1 and Theorem 1.2]{HendersonSnelsonTarfulea}}]\label{thm:henderson_snelson_tarfulea}
Suppose $f_{in} \in \S(\R^3)$ is non-negative. Then, there is a time $0 < T^* \le \infty$ and a function $f:[0,T^*)\times \R^3 \rightarrow \R^+$ with $f\in C^\infty(0,T^*;\S(\R^3))$ such that $[0,T^*)$ is the maximal time interval on which $f$ is the unique classical solution to \eqref{eq:landau} with initial datum $f_{in}$. Furthermore, if $T^* < \infty$, then $f$ satisfies
\begin{equation*}
\lim_{t \nearrow T^*} \|f(t)\|_{L^\infty} = +\infty.
\end{equation*}
\end{theorem}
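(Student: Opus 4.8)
The plan is to build the solution locally in time by a contraction-mapping argument in a weighted Sobolev space, upgrade it to a smooth, rapidly decaying solution by parabolic regularity, and then extract the continuation criterion from a closed hierarchy of weighted energy estimates. The first ingredient is the mapping properties of $g\mapsto A[g]$ and $g\mapsto a[g]$: since $A[g]=\tfrac{\Pi(v)}{8\pi|v|}\ast g$ and $a[g]=\tfrac{1}{4\pi|v|}\ast g$ have a kernel of order $-1$, convolution and singular-integral estimates give, for $g\ge 0$ with mass $m_0>0$ and energy $e_0$, the bounds $\|A[g]\|_{L^\infty}\lesssim \|g\|_{L^1}+\|g\|_{L^\infty}$, control of $\partial^\alpha A[g]$ and $\nabla a[g]$ by low-order weighted norms of $g$, and — crucially — the degenerate ellipticity bound
\begin{equation*}
\xi\cdot A[g](v)\,\xi \ \ge\ c_0\big(\|g\|_{L^\infty},m_0,e_0\big)\,\langle v\rangle^{-3}\,|\xi|^2,\qquad \xi\in\R^3.
\end{equation*}
I would also use the identities $\nabla\cdot A[g]=\nabla a[g]$ and $-\Delta a[g]=g$, so that \eqref{eq:landau} may be written as $\partial_t f=\nabla\cdot(A[f]\nabla f)-\nabla a[f]\cdot\nabla f+f^2$.

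\textbf{Local existence, positivity, uniqueness.} For $g$ ranging over a ball of $C([0,T];H^k_m)$ (with $k$ and $m$ large, the weight $m$ chosen to absorb the degeneracy $\langle v\rangle^{-3}$), one solves the \emph{linear} parabolic problem $\partial_t f=\nabla\cdot(A[g]\nabla f)-\nabla a[g]\cdot\nabla f+gf$, $f(0)=f_{in}$, using the classical theory for linear parabolic equations with smooth, locally uniformly elliptic coefficients. Weighted energy estimates of the form $\tfrac{d}{dt}\|f\|_{H^k_m}^2\le P(\|g\|_{H^k_m})\|f\|_{H^k_m}^2$, plus an analogous estimate on differences, show the solution map is a self-map and a contraction for $T$ small depending on $\|f_{in}\|_{H^k_m}$; its fixed point is a local classical solution. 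Nonnegativity propagates by testing against $f_-=\min(f,0)$: the reaction term yields $\int f^2 f_-\,dv=\int_{\{f<0\}}f_-^3\,dv\le 0$ and the remaining terms are handled by the maximum principle for the diffusion-plus-transport part. Uniqueness among classical (or $H^1$-energy) solutions follows from a Gronwall estimate on $f_1-f_2$ in $L^2$ or $H^{-1}$, using $A[f_1]-A[f_2]=A[f_1-f_2]$.

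\textbf{Smoothing and Schwartz class.} A parabolic bootstrap — Schauder estimates, or iterated weighted energy estimates exploiting that the coefficients inherit the smoothness and decay of $f$ — shows $f(t)$ instantaneously gains velocity derivatives; combined with propagation of polynomial moments (testing \eqref{eq:landau} against $\langle v\rangle^N$ and using the structure of $\mathcal Q$), this yields $f\in C^\infty(0,T^*;\S(\R^3))$, and since $f_{in}\in\S(\R^3)$ these bounds in fact persist up to $t=0$. Define $T^*$ as the supremum of existence times of the classical Schwartz solution.

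\textbf{Continuation criterion, and the main obstacle.} Suppose $T^*<\infty$ and $K:=\sup_{[0,T^*)}\|f(t)\|_{L^\infty}<\infty$; I must reach a contradiction. Mass and energy are conserved, so by the first paragraph $c_0(K,m_0,e_0)\langle v\rangle^{-3}\le A[f]\le C(K,m_0)$ uniformly on $[0,T^*)$, and $\partial^\alpha A[f]$, $\nabla a[f]$ are controlled by $K$, $m_0$, $e_0$ and lower-order weighted norms of $f$. Feeding these into weighted energy estimates for $\partial^\alpha f$ and inducting on $|\alpha|$ and on the weight, one closes a hierarchy $\tfrac{d}{dt}\|f\|_{H^k_m}^2\le C_k\big(K,m_0,e_0,\|f\|_{H^{k-1}_{m'}}\big)\big(1+\|f\|_{H^k_m}^2\big)$; Gronwall then keeps every $\|f(t)\|_{H^k_m}$ bounded on $[0,T^*)$, so the local existence step applied near $T^*$ extends $f$ past $T^*$, contradicting maximality. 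Thus $T^*<\infty$ forces $\limsup_{t\nearrow T^*}\|f(t)\|_{L^\infty}=\infty$, promoted to a genuine limit by reapplying the local theory on short intervals approaching $T^*$. \emph{The main obstacle} is precisely obtaining this closed hierarchy: because $A[f]$ degenerates like $\langle v\rangle^{-3}$ at large velocities, the energy estimates must be arranged so that the degeneracy is compensated by the weight $\langle v\rangle^m$ and by moment bounds, and so that the transport term $\nabla a[f]\cdot\nabla f$ and the reaction term $f^2$ are absorbed using only $\|f\|_{L^\infty}$ and the conserved quantities — never the higher norms being estimated. This is exactly what pins down $\|f\|_{L^\infty}$ as the right quantity in the blow-up criterion.
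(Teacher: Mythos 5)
First, a point of comparison: the paper itself does not prove this statement at all — it is imported without proof from Henderson–Snelson–Tarfulea \cite{HendersonSnelsonTarfulea} (Section \ref{sec:prelims} explicitly collects "known results that are used without proof"). So there is no internal argument to measure you against; your sketch is instead to be compared with the cited work, and in outline it follows the same route: coefficient bounds for $A[\cdot]$ and $a[\cdot]$ including the degenerate lower bound $c_0\langle v\rangle^{-3}$, a frozen-coefficient linearization and fixed point in weighted Sobolev spaces $H^k_m$, propagation of moments plus parabolic smoothing to remain in $\S(\R^3)$, and a hierarchy of weighted energy estimates closed using only $\|f\|_{L^\infty}$, mass and energy, which yields the continuation criterion. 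At that level the strategy is correct.

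There is, however, one step that fails as written: the promotion of $\limsup_{t\nearrow T^*}\|f(t)\|_{L^\infty}=\infty$ to a genuine limit "by reapplying the local theory on short intervals approaching $T^*$." Your local existence time is quantified by $\|f(t_j)\|_{H^k_m}$, not by $\|f(t_j)\|_{L^\infty}$; along a sequence $t_j\nearrow T^*$ where only the $L^\infty$ norm stays bounded, the weighted Sobolev norms may (and, if $\limsup=\infty$, must) blow up, so restarting the contraction at $t_j$ gives no uniform extension time and no contradiction. To upgrade $\limsup$ to $\lim$ one needs an additional short-time bound on $\|f\|_{L^\infty}$ depending only on $\|f(t_j)\|_{L^\infty}$ — e.g.\ the comparison-principle estimate $\frac{\dd}{\dd t}\|f\|_{L^\infty}\le\|f\|_{L^\infty}^2$ obtained from the non-divergence form $\partial_t f=A[f]:\nabla^2 f+f^2$ — combined with your conditional a priori estimates; note also that the present paper only ever uses the weaker $\limsup$ form (boundedness of $\|h\|_{L^\infty}$ on $(0,\min(T_0,T^*))$ forcing $T^*>T_0$). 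Two smaller points: in the positivity argument you invoke the nonlinear reaction term $\int f^2 f_-\le 0$, but inside the iteration the zeroth-order term is $gf$, which gives $\int g f_-^2\ge 0$ with the unfavorable sign and must instead be absorbed by Gr\"onwall using $\|g\|_{L^\infty}$ (with the transport term handled via $-\Delta a[g]=g$); and the linear solve is genuinely degenerate as $|v|\to\infty$ since $A[g]$ decays, so "classical theory for locally uniformly elliptic coefficients" must be supplemented by a regularization (vanishing viscosity or Galerkin) compatible with your weighted estimates — standard, but not automatic.
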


We continue with a global-in-time existence result for small data, which is a simplified version of a result shown by Desvillettes, He and Jiang in \cite{DHJ}: 
\begin{theorem}[\protect{\cite[Proposition 1.2]{DHJ}}]\label{thm:desvillettes_he_jiang}
Suppose $\|f_{in}\|_{L\log L} + \|f_{in}\|_{L^1_{55}} \le K$ and $f_{in}$ is a non-negative initial datum. Let $\mu$ be the corresponding Maxwellian. Then, there is an $\eps_1 = \eps_1(K) > 0$ and $C = C(K) > 0$ such that if 
\begin{equation*}
\norm{(\nabla f_{in} - \nabla\mu)\langle \cdot \rangle^2}_{L^2} + \norm{(f_{in} - \mu)\langle \cdot \rangle^2}_{L^2} < \eps_1,
\end{equation*}
then there is a global solution $f:\R^+ \times \R^3 \rightarrow \R^+$ to \eqref{eq:landau}, satisfying
\begin{equation*}
\norm{(\nabla f(t) - \nabla\mu)\langle \cdot \rangle^2}_{L^2} + \norm{(f(t) - \mu)\langle \cdot \rangle^2}_{L^2} \le \frac{C}{(1 + t)^{15/4}}.
\end{equation*}
\end{theorem}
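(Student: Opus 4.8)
The plan is to prove Theorem~\ref{thm:desvillettes_he_jiang} by a perturbative energy method around the Maxwellian $\mu$, combining the spectral gap of the linearized Landau--Coulomb operator with propagation of moments and entropy, and then upgrading the resulting a priori estimate to a global solution via a continuation criterion. First I would reduce to smooth data: mollify $f_{in}$ so that Theorem~\ref{thm:henderson_snelson_tarfulea} yields a smooth local solution, derive every estimate below with constants depending only on $K$ and $\eps_1$, and pass to the limit at the end. Throughout, $h := f - \mu$ is the unknown; by the normalization \eqref{eq:normalization}, $h(t)$ is orthogonal in $L^2(\dd v)$ to $1, v_1, v_2, v_3, \abs v^2$, i.e.\ to the kernel of the linearized operator, for all $t$.

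The heart of the argument is an a priori estimate on a weighted $H^1$ energy $\mathcal E(t) := \norm{h(t)}_{H^1_2}^2$ (or, as in \cite{DHJ}, a slightly adjusted functional that builds in the anisotropic weight of $A[\mu]$). Decomposing $\mathcal Q(f,f) = \mathcal L h + \mathcal Q(h,h)$ with $\mathcal L h := \nabla\cdot(A[\mu]\nabla h - \nabla a[\mu]\, h) + \nabla\cdot(A[h]\nabla\mu - \nabla a[h]\,\mu)$ the linearized Landau--Coulomb operator, one computes
\begin{equation*}
\tfrac12\tfrac{\dd}{\dd t}\mathcal E(t) = \brak{\mathcal L h, h}_{H^1_2} + \brak{\mathcal Q(h,h), h}_{H^1_2}.
\end{equation*}
The linear term is bounded above by $-\lambda \mathcal D(t)$, where $\mathcal D(t)$ is the natural anisotropic dissipation functional (a weighted $L^2$ norm of $\langle v\rangle^{-1/2}\nabla_{\mathrm{tang}} h$ and $\langle v\rangle^{-3/2}\partial_r h$ against a $\langle v\rangle^{3}$-type weight): this is the weighted coercivity estimate for $\mathcal L$ on $(\ker\mathcal L)^\perp$, the linearized form of the Landau entropy--dissipation inequality, see the coercivity estimates of Guo, Baranger--Mouhot and Carrapatoso--Tristani--Wu. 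The quadratic term is controlled using weighted Sobolev product estimates for the Landau bilinear form, giving a bound of the form $\abs{\brak{\mathcal Q(h,h),h}_{H^1_2}} \le C\, \norm{h}_{H^1_2}^{1/2}\,\mathcal D(t)$, so that once $\norm{h}_{H^1_2}$ lies below a fixed threshold $\eps_\ast(K)$ the quadratic term is absorbed and
\begin{equation*}
\tfrac{\dd}{\dd t}\mathcal E(t) + \lambda\, \mathcal D(t) \le 0.
\end{equation*}
Choosing $\eps_1 < \eps_\ast(K)$ and using $\mathcal E(0) < \eps_1^2$, a continuity argument keeps $\mathcal E(t) < \eps_1^2$ for all time.

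Next I would extract the algebraic decay rate. Since $\mathcal D$ controls a strictly higher weight than $\mathcal E$ (the Coulomb diffusion degenerates like $\langle v\rangle^{-1}$ tangentially and $\langle v\rangle^{-3}$ radially), there is no spectral gap for $\mathcal E$ itself; instead one interpolates $\norm{h}_{H^1_2}$ between $\mathcal D^{1/2}$ and a high-weight norm $\norm{h}_{H^1_m}$, and the latter is bounded \emph{uniformly in $t$} by combining the $L^1$-moment propagation of \cite{desvillettesHsols,DCH} (which is exactly where $\norm{f_{in}}_{L^1_{55}}\le K$ and $\norm{f_{in}}_{L\log L}\le K$ are used) with short-time parabolic regularization. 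This yields $\mathcal D(t)\ge c\,\mathcal E(t)^{1+\sigma}$ for an explicit $\sigma = \sigma(m)>0$, hence $\mathcal E(t)\le C(1+t)^{-1/\sigma}$; tracking the exponents with $m\approx 55$ produces the stated rate $\norm{h(t)}_{H^1_2}\le C(1+t)^{-15/4}$. Finally, a higher-order bootstrap — running the same energy identity in $H^k_m$ for all $k,m$, with the nonlinear terms absorbed using the now-established smallness of $h$ in $H^1_2$ — shows $\norm{f(t)}_{H^k_m}$ is bounded uniformly in $t$; in particular $\sup_{t\ge 0}\norm{f(t)}_{L^\infty} < \infty$, so the blow-up criterion in Theorem~\ref{thm:henderson_snelson_tarfulea} is never triggered and the solution is global, and uniqueness in this regularity class is standard (e.g.\ via \cite{Fournier}, since $f\in L^\infty$).

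The main obstacle is the coercivity step for the Coulomb potential: Landau--Coulomb is the most degenerate case, so one must work with the correct anisotropic weighted dissipation norm $\mathcal D$ and carefully track the loss of weight between $\mathcal D$ and $\mathcal E$ — both to close the nonlinear absorption in the a priori estimate (matching the smallness hypothesis to the threshold $\eps_\ast(K)$, which depends on the moment and entropy bounds) and to run the interpolation that converts the differential inequality into the explicit rate $(1+t)^{-15/4}$. A secondary, bookkeeping-heavy obstacle is verifying that the high-weight norms $\norm{h}_{H^1_m}$ with $m$ large genuinely remain bounded for all time, which requires combining the moment generation/propagation estimates with the parabolic smoothing of \eqref{eq:landau} in a way that is uniform in $t$.
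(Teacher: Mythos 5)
First, a point of comparison: the paper does not prove this statement at all. Theorem~\ref{thm:desvillettes_he_jiang} sits in Section~\ref{sec:prelims} among ``known results that are used without proof''; it is quoted verbatim from \cite[Proposition 1.2]{DHJ} and used as a black box (its only role here is to furnish the global continuation once smallness of $\|h(T_0/2)\|_{L^2_1}+\|\nabla h(T_0/2)\|_{L^2_2}$ has been engineered in Section~\ref{sec:smooth_data}). So there is no internal proof to measure your sketch against; what you have written is an attempted reconstruction of the argument of \cite{DHJ} itself.

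As such a reconstruction, your outline captures the right general shape (perturbative energy estimate, degenerate coercivity, interpolation with moments to extract an algebraic rate), but two steps that you treat as available tools are in fact the substantive content of the cited result, and as stated they have genuine gaps. First, the coercivity step: the standard spectral-gap/coercivity estimates for the linearized Landau--Coulomb operator (Guo, Baranger--Mouhot, Carrapatoso et al.) are formulated for the $\mu$-normalized perturbation $g=(f-\mu)/\sqrt{\mu}$ in $\mu$-weighted $L^2$-type spaces, with orthogonality to $\operatorname{span}\{\sqrt\mu, v\sqrt\mu,\abs{v}^2\sqrt\mu\}$; your unknown is the unweighted $h=f-\mu$ measured in polynomially weighted $H^1_2$, and the conservation laws give $\int h\,(1,v,\abs v^2)\,\dd v=0$, which is not the orthogonality in the inner product in which coercivity is known. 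Moreover $\brak{\mathcal L h,h}_{H^1_2}$ is not simply bounded by $-\lambda\mathcal D(t)$: differentiating the equation and commuting with the weight $\brak{v}^2$ produces commutator and coefficient-derivative terms that must themselves be absorbed, and handling these for the most degenerate (Coulomb) case is precisely where \cite{DHJ} works hard. Second, the decay step: you interpolate $\norm{h}_{H^1_2}$ between the dissipation and $\norm{h}_{H^1_m}$ and assert that the latter is bounded \emph{uniformly in time} for large $m$. Nothing in the available estimates gives this; what propagates is only $L^1$ moments, and even those grow linearly in time (Lemma~\ref{lem:moments_h}). Claiming uniform $H^1_m$ bounds for all $m$ before the decay is established is close to assuming the conclusion; the actual argument must interpolate against slowly growing $L^1$ moments (this is exactly why the hypothesis $\norm{f_{in}}_{L^1_{55}}\le K$ appears and why the exponent $15/4$ emerges), and your exponent bookkeeping (including the ad hoc bound $\abs{\brak{\mathcal Q(h,h),h}_{H^1_2}}\le C\norm{h}_{H^1_2}^{1/2}\mathcal D$) is not justified as written. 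In short, the proposal is a reasonable roadmap but does not constitute a proof of the cited proposition; within this paper the correct move is simply to invoke \cite{DHJ}.
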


We conclude this section by recalling a uniqueness result due to Fournier:
\begin{theorem}[\protect{\cite[Theorem 2]{Fournier}}]\label{thm:Fournier}
    Suppose $f,\ g:[0,T]\times \R^3 \rightarrow \R^+$ are solutions to \eqref{eq:landau} in the sense of distributions with the same initial datum $f_{in} \in L^1_2$ and both belong to $L^\infty(0,T;L^1_2) \cap L^1(0,T;L^\infty)$. Then, $f(t) = g(t)$ for all $0 < t < T$.
\end{theorem}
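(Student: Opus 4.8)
\emph{Proof proposal.} I would argue probabilistically, following Fournier's method, which is tailored to exactly these two hypotheses: the $L^1(0,T;L^\infty)$ bound neutralises the local $\abs z^{-1}$-singularity of the Landau kernels, while the $L^\infty(0,T;L^1_2)$ bound keeps the associated process at finite energy and controls the far field of the relevant convolutions. The first step is to recast each distributional solution as the family of time-marginals of a nonlinear process. Writing \eqref{eq:landau} in Fokker--Planck form, $\partial_t f = \partial_{ij}\bigl(A_{ij}[f]f\bigr) - \partial_i\bigl(b_i[f]f\bigr)$ with drift $b[f] := 2\nabla a[f] = -\tfrac{1}{2\pi}\tfrac{v}{\abs v^3}\ast f$ (the equivalence with \eqref{eq:landau} being a consequence of $\nabla\cdot A[f] = \nabla a[f]$ and $-\Delta a[f] = f$), the coefficients $A[f_t]$ and $b[f_t]$ are, for a.e.\ $t$, bounded functions of $v$ with sup-norms controlled by $\norm{f_t}_{L^1\cap L^\infty}$, hence integrable in $t$ on $(0,T)$. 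By a superposition principle for Fokker--Planck equations with such coefficients there is a probability space carrying a process $(V_t)_{0\le t\le T}$ with $\mathcal L(V_t) = f_t$ solving the associated martingale problem; and since $\Pi(z)$ is an orthogonal projection one may factor $\tfrac{1}{4\pi}\tfrac{\Pi(z)}{\abs z} = \sigma(z)\sigma(z)^\top$ with $\sigma(z) := \tfrac{1}{2\sqrt\pi}\abs z^{-1/2}\Pi(z)$, so that $V$ may be realised as a solution of the white-noise driven nonlinear SDE $\dd V_t = \int_{\R^3}\sigma(V_t - v_*)\,\mathcal W_f(\dd v_*,\dd t) + b[f_t](V_t)\,\dd t$, where $\mathcal W_f$ is a white noise of intensity $f_t(v_*)\,\dd v_*\,\dd t$. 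Build the analogous process $(W_t)$ from $g$, with $W_0 = V_0 \sim f_{in}$.

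The second step is a synchronous coupling: fix a measurable family $(\pi_t)$ of optimal quadratic couplings of $(f_t, g_t)$ and drive $V$ and $W$ through $\pi_t$ by a common noise, so that $\dd(V_t - W_t) = \dd M_t + \bigl(b[f_t](V_t) - b[g_t](W_t)\bigr)\dd t$ with $\dd\langle M\rangle_t = \bigl(\int_{\R^3\times\R^3}\norm{\sigma(V_t - v_*) - \sigma(W_t - w_*)}_{\mathrm{HS}}^2\,\pi_t(\dd v_*,\dd w_*)\bigr)\dd t$. Itô's formula applied to $\abs{V_t - W_t}^2$, followed by taking expectations, gives
\beqs
\frac{\dd}{\dd t}\,\mathbb E\abs{V_t - W_t}^2 = 2\,\mathbb E\bigl[(V_t - W_t)\cdot(b[f_t](V_t) - b[g_t](W_t))\bigr] + \mathbb E\!\int_{\R^3\times\R^3}\!\norm{\sigma(V_t - v_*) - \sigma(W_t - w_*)}_{\mathrm{HS}}^2\,\pi_t(\dd v_*,\dd w_*),
\eeqs
which is meaningful since $\mathbb E\abs{V_t}^2 = \int_{\R^3}\abs v^2 f_t\dd v < \infty$ by the $L^1_2$ bound.

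The heart of the matter is a pair of quantitative estimates valid for a.e.\ $t$. Splitting $\sigma(V_t - v_*) - \sigma(W_t - w_*)$ into a ``same-kernel'' term $\sigma(V_t - v_*) - \sigma(W_t - v_*)$ and a ``transported'' term $\sigma(W_t - v_*) - \sigma(W_t - w_*)$ (and likewise for the drift), one bounds each by decomposing the integral into the region near the singularity --- where the pointwise bounds $\norm{\sigma(z)} \lesssim \abs z^{-1/2}$ and $\abs{K(z)} \lesssim \abs z^{-2}$, with $K(z) := z/\abs z^3$ the drift kernel, make the relevant integrals converge at the cost of a factor $\norm{f_t}_{L^\infty}$ --- and the far region --- where a mean-value bound together with the $0$-homogeneity of $\Pi$ produces an integrand $\lesssim \abs{V_t - W_t}^2\abs z^{-3}$ (respectively with $\abs{v_* - w_*}^2$), integrable up to a single logarithm in $\abs{V_t - W_t}$ (respectively in $W_2(f_t, g_t)$), with slowly decaying tails absorbed by $\norm{f_t}_{L^1}$. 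The conclusion is a log-Lipschitz-in-law estimate: the right-hand side of the identity above is $\lesssim C(t)\,\mathbb E\bigl[\omega(\abs{V_t - W_t}^2) + \omega\bigl(\abs{V_t - W_t}^2 + W_2^2(f_t, g_t)\bigr)\bigr]$, with $\omega(u) := u\bigl(1 + \log_+(1/u)\bigr)$ and $C(t) := C\bigl(1 + \norm{f_t}_{L^\infty} + \norm{g_t}_{L^\infty}\bigr)\bigl(1 + \norm{f_t}_{L^1_2} + \norm{g_t}_{L^1_2}\bigr) \in L^1(0,T)$.

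Since $(V_t, W_t)$ is itself a coupling of $(f_t, g_t)$ and $\pi_t$ is optimal, $W_2^2(f_t, g_t) \le \mathbb E\abs{V_t - W_t}^2$, and by Jensen's inequality (concavity of $\omega$) the right-hand side is $\lesssim C(t)\,\omega\bigl(\mathbb E\abs{V_t - W_t}^2\bigr)$. As $\omega$ is concave with $\int_{0^+}\tfrac{du}{\omega(u)} = +\infty$ and $C \in L^1(0,T)$, Osgood's uniqueness criterion together with $\mathbb E\abs{V_0 - W_0}^2 = 0$ forces $\mathbb E\abs{V_t - W_t}^2 \equiv 0$ on $(0,T)$, whence $f_t = \mathcal L(V_t) = \mathcal L(W_t) = g_t$. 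I expect the main obstacle to be precisely the two log-Lipschitz estimates above: one must quantify the genuine failure of Lipschitz continuity of the Coulomb-singular nonlocal maps $v \mapsto \sigma(v - \cdot)$ and $v \mapsto b[f](v)$ after averaging against the merely $L^\infty$ (not smooth) densities --- this is the borderline case in which the logarithm really appears --- while the construction of a measurable family of optimal couplings, though standard, is a further technical point. A purely PDE-based proof is also conceivable --- a weighted $L^2$-stability estimate for $h = f - g$ exploiting parabolic smoothing of $f$ and $g$ on $(0,T)$ to give meaning to the quasilinear coupling term $A[h]:\nabla^2 g$ --- but it appears considerably more delicate near $t = 0$, and I would not pursue it.
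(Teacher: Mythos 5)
The paper does not prove this statement: it is imported verbatim from Fournier's work and listed in Section \ref{sec:prelims} among the results used without proof. Your proposal is, in outline, exactly Fournier's original argument --- representing each solution as the law of a white-noise-driven nonlinear Landau process, synchronously coupling the two processes through optimal couplings of the marginals, splitting the kernel estimates into a near-singularity part absorbed by the $L^1(0,T;L^\infty)$ bound and a far part giving a logarithmic modulus, and closing with an Osgood-type lemma --- so it is consistent with the cited source, with the caveat (which you yourself flag) that the two log-Lipschitz estimates you leave unproved are precisely where essentially all of Fournier's work lies.
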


Very frequently in the rest of the manuscript we will use the following  weighted Lebesgue interpolation result, which is a simple consequence of H\"older's inequality.
\begin{lemma}\label{lem:interpolation}
Suppose $\theta \in (0,1)$, $1 \le q, p_1, p_2 \le \infty$, and $\beta, \alpha_1,\alpha_2 \in \R$ satisfy the relations
\begin{equation*}
\frac{1}{q} = \frac{\theta}{p_1} + \frac{1-\theta}{p_2} \quad \text{and} \quad \beta = \theta \alpha_1 + (1-\theta)\alpha_2.
\end{equation*}
Then, for any $f\in L^{p_1}_{\alpha_1} \cap L^{p_2}_{\alpha_2}$, $f\in L^q_\beta$ and
\begin{equation*}
\big\|\brak{v}^\beta f\big\|_{L^q} \leq \big\|\brak{v}^{\alpha_1}f\big\|_{L^{p_1}}^\theta\big\|\brak{v}^{\alpha_2}f\big\|_{L^{p_2}}^{1-\theta}.
\end{equation*}
\end{lemma}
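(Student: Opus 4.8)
The plan is to reduce the whole statement to a single application of H\"older's inequality preceded by an elementary pointwise factorization. First I would dispose of the degenerate cases. If either $\big\|\brak{v}^{\alpha_1}f\big\|_{L^{p_1}}$ or $\big\|\brak{v}^{\alpha_2}f\big\|_{L^{p_2}}$ is infinite there is nothing to prove, and if one of them vanishes then $f = 0$ almost everywhere and the inequality is trivial; so we may assume both quantities are finite and positive. If $q = \infty$, then $\tfrac{\theta}{p_1} + \tfrac{1-\theta}{p_2} = 0$ forces $p_1 = p_2 = \infty$ (since $\theta, 1-\theta > 0$), and the relation $\beta = \theta\alpha_1 + (1-\theta)\alpha_2$ gives the desired bound pointwise. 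If exactly one of $p_1, p_2$ is infinite, say $p_1 = \infty$, then $\tfrac1q = \tfrac{1-\theta}{p_2}$, so $p_2 = q(1-\theta)$, and the pointwise bound $\big|\brak{v}^\beta f\big| \le \big\|\brak{v}^{\alpha_1}f\big\|_{L^\infty}^{\theta}\, \big|\brak{v}^{\alpha_2}f\big|^{1-\theta}$, followed by taking the $L^q$ norm of both sides, finishes this case.

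It remains to treat $p_1, p_2, q < \infty$. Here the two linear relations on the exponents let me write, pointwise,
\begin{equation*}
\big|\brak{v}^\beta f\big|^q = \big|\brak{v}^{\alpha_1}f\big|^{\theta q}\,\big|\brak{v}^{\alpha_2}f\big|^{(1-\theta)q},
\end{equation*}
using $\beta q = \theta\alpha_1 q + (1-\theta)\alpha_2 q$ and $q = \theta q + (1-\theta) q$. I would then apply H\"older's inequality with exponents $r_1 = \tfrac{p_1}{\theta q}$ and $r_2 = \tfrac{p_2}{(1-\theta) q}$, which are conjugate since $\tfrac{1}{r_1} + \tfrac{1}{r_2} = q\big(\tfrac{\theta}{p_1} + \tfrac{1-\theta}{p_2}\big) = 1$. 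The only point requiring a word is that $r_1, r_2 \ge 1$: this holds because $\tfrac{1}{q} = \tfrac{\theta}{p_1} + \tfrac{1-\theta}{p_2} \ge \tfrac{\theta}{p_1}$ gives $\theta q \le p_1$, and symmetrically $(1-\theta) q \le p_2$. Applying H\"older and extracting $q$-th roots yields
\begin{equation*}
\big\|\brak{v}^\beta f\big\|_{L^q} \le \Big(\int \big|\brak{v}^{\alpha_1}f\big|^{p_1}\Big)^{\frac{\theta}{p_1}} \Big(\int \big|\brak{v}^{\alpha_2}f\big|^{p_2}\Big)^{\frac{1-\theta}{p_2}} = \big\|\brak{v}^{\alpha_1}f\big\|_{L^{p_1}}^{\theta}\big\|\brak{v}^{\alpha_2}f\big\|_{L^{p_2}}^{1-\theta},
\end{equation*}
which is the claim.

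There is no genuine obstacle: this is exactly the weighted analogue of the classical Lyapunov/H\"older interpolation inequality, and the weight $\brak{v}^\beta$ comes along for free precisely because $\beta$ is the affine combination of $\alpha_1$ and $\alpha_2$ with the same weights $\theta, 1-\theta$ that govern the Lebesgue exponents. The only mildly delicate step worth spelling out in the writeup is the admissibility ($\ge 1$) of the H\"older exponents, which, as noted, is automatic from the harmonic-mean relation between $q$, $p_1$, and $p_2$.
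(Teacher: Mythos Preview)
Your proposal is correct and follows exactly the approach the paper indicates: the paper does not give a proof but simply states that the lemma ``is a simple consequence of H\"older's inequality,'' which is precisely what you carry out via the pointwise factorization and the conjugate exponents $r_1 = p_1/(\theta q)$, $r_2 = p_2/((1-\theta)q)$.
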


The following weighted Sobolev-type inequality will be crucial in the proof of regularization.  The proof is in the Appendix.  
\begin{lemma}\label{lem:poincare}
Suppose $g: \R^3 \rightarrow \R$ is Schwartz class and let $1 \le s \le 6$. Then, there are universal constants $C_1(s)$ and $C_2(s)$ such that
\begin{equation*}
\left(\int_{\R^3} |g|^6 \brak{v}^{-9}\;dv\right)^{1/3} \le C_1\int_{\R^3} |\nabla g(v)|^2 \brak{v}^{-3} \;\dd v + C_2\left(\int_{\R^3} \abs{g}^s\dd v\right)^{2/s}.
\end{equation*}
\end{lemma}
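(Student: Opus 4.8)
The plan is to prove the weighted inequality
\[
\Bigl(\int_{\R^3} |g|^6 \brak{v}^{-9}\;dv\Bigr)^{1/3} \le C_1\int_{\R^3} |\nabla g|^2 \brak{v}^{-3}\;dv + C_2\Bigl(\int_{\R^3}|g|^s\;dv\Bigr)^{2/s}
\]
by reducing it to the unweighted Sobolev inequality $\|u\|_{L^6(\R^3)}^2 \lesssim \|\nabla u\|_{L^2(\R^3)}^2$ via the substitution $u = \brak{v}^{-3/2} g$. The key algebraic observation is that the weight $\brak{v}^{-9}$ in the left-hand side is exactly $\brak{v}^{-3}$ raised to the power $6/2 = 3$ (accounting for $|g|^6 = |u|^6 \brak{v}^{9}$), so that $\int |g|^6\brak{v}^{-9} = \int|u|^6\,dv$ is genuinely an unweighted $L^6$ norm of $u$. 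Thus $\|u\|_{L^6}^2 \le C_{\mathrm{Sob}}\|\nabla u\|_{L^2}^2$, and it remains to control $\|\nabla u\|_{L^2}^2$ in terms of the right-hand side.

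The main work is the gradient bound. Computing $\nabla u = \brak{v}^{-3/2}\nabla g - \tfrac{3}{2}\brak{v}^{-7/2} v\, g$, so that $|\nabla u|^2 \le 2\brak{v}^{-3}|\nabla g|^2 + C\brak{v}^{-5}|g|^2$ (using $|v|^2\brak{v}^{-7} \le \brak{v}^{-5}$). The first term integrates to exactly $2\int|\nabla g|^2\brak{v}^{-3}$, which is the first term on the right-hand side. The second term requires controlling $\int |g|^2 \brak{v}^{-5}\,dv$. I would handle this by splitting $\R^3 = B_R \cup B_R^c$ for a fixed large radius $R$ (or just $R = 1$): on $B_R^c$ the weight $\brak{v}^{-5}$ is small, but more to the point, since we only need \emph{some} bound, I would instead interpolate. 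Specifically, the term $\int|g|^2\brak{v}^{-5}$ must ultimately be absorbed or dominated — but note there is no small parameter here, so absorption into the left side is not available directly. Instead, the cleaner route: use the interpolation/Hölder inequality to bound $\int|g|^2\brak{v}^{-5}\,dv \le \bigl(\int|g|^6\brak{v}^{-9}\bigr)^{1/3}\bigl(\int\brak{v}^{-9/2}\,dv\bigr)^{2/3}$ — wait, one checks the weight exponents: $\brak{v}^{-5} = \brak{v}^{-3\cdot(1/3)}\cdot\brak{v}^{-4}$, and $\brak{v}^{-4}$ with exponent conjugate $3/2$ gives $\brak{v}^{-6}$, which is integrable on $\R^3$. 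So by Hölder with exponents $3$ and $3/2$,
\[
\int_{\R^3}|g|^2\brak{v}^{-5}\,dv \le \Bigl(\int_{\R^3}|g|^6\brak{v}^{-9}\,dv\Bigr)^{1/3}\Bigl(\int_{\R^3}\brak{v}^{-6}\,dv\Bigr)^{2/3} = C\,\mathrm{LHS}.
\]
This is dangerous because it reproduces the left-hand side with a constant that is \emph{not} small. The fix is the standard one: apply this Hölder bound only on $B_R^c$ where $\int_{B_R^c}\brak{v}^{-6}$ can be made as small as we like by taking $R$ large, so $C\varepsilon\,\mathrm{LHS}$ is absorbed; on the bounded region $B_R$ we bound $\int_{B_R}|g|^2\brak{v}^{-5} \le \int_{B_R}|g|^2 \le |B_R|^{1-2/s}\bigl(\int_{B_R}|g|^s\bigr)^{2/s}$ by Hölder (valid since $s\ge 2$; if $s < 2$ we can still interpolate $L^2(B_R)$ between $L^s$ and $L^6(\brak{v}^{-9})$ on $B_R$ using another small-parameter Young's inequality, since $2$ lies strictly between $1\le s$ and $6$). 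This gives the $C_2(\int|g|^s)^{2/s}$ term, completing the proof.

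I expect the \textbf{main obstacle} to be organizing the absorption argument cleanly so that the constant multiplying the left-hand side is strictly less than $1$: one must choose $R$ large enough that $C_{\mathrm{Sob}}\cdot C\cdot(\int_{B_R^c}\brak{v}^{-6})^{2/3} < \tfrac12$, then absorb, and only afterwards does the bounded-region term (which depends on $R$, hence on the universal data) produce the final constant $C_2(s)$. A secondary technical point is the case $1\le s < 2$, where the elementary Hölder bound on $B_R$ fails and one instead uses Lebesgue interpolation (Lemma~\ref{lem:interpolation}, localized to $B_R$) together with Young's inequality $ab \le \eta a^3 + C_\eta b^{3/2}$ to split off a small multiple of the $L^6$ term plus a power of the $L^s$ term; choosing $\eta$ small lets this also be absorbed. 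Everything else — the substitution, the product rule, the Sobolev inequality, the weight bookkeeping — is routine.
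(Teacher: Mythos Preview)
Your substitution $u = \brak{v}^{-3/2}g$ is a clean and genuinely different route from the paper's proof, which instead covers $\R^3$ by unit cubes, invokes a local two-weight Sobolev--Poincar\'e inequality of Sawyer--Wheeden type on each cube, and controls the cube averages by $\|g\|_{L^s}$ via Jensen. However, as written your argument contains a weight-arithmetic error that breaks the absorption step.

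The error is in the H\"older bound for $\int |g|^2\brak{v}^{-5}\dd v$. To produce the factor $\bigl(\int |g|^6\brak{v}^{-9}\dd v\bigr)^{1/3}$ you must split $|g|^2\brak{v}^{-5} = (|g|^2\brak{v}^{-3})\cdot\brak{v}^{-2}$, and H\"older with exponents $3,\,3/2$ then leaves $\bigl(\int \brak{v}^{-3}\dd v\bigr)^{2/3}$, not $\bigl(\int\brak{v}^{-6}\dd v\bigr)^{2/3}$. (Your alternative factorization $\brak{v}^{-5}=\brak{v}^{-1}\cdot\brak{v}^{-4}$ does give $\brak{v}^{-6}$ in the second factor, but the first factor becomes $\bigl(\int|g|^6\brak{v}^{-3}\dd v\bigr)^{1/3}$, which is not the left-hand side.) Since $\int_{B_R^c}\brak{v}^{-3}\dd v$ diverges logarithmically for every $R$, the $B_R^c$ piece cannot be made small and the absorption fails: the weight $\brak{v}^{-5}$ coming from the crude estimate $|a+b|^2 \le 2|a|^2+2|b|^2$ is exactly critical.

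The fix is to expand $\int|\nabla u|^2$ and integrate the cross term by parts instead. Writing $\omega=\brak{v}^{-3/2}$,
\[
\int_{\R^3}|\nabla(\omega g)|^2\dd v = \int_{\R^3}\omega^2|\nabla g|^2\dd v - \tfrac12\int_{\R^3} g^2\,\Delta(\omega^2)\dd v + \int_{\R^3} g^2|\nabla\omega|^2\dd v,
\]
and a direct computation gives $-\tfrac12\Delta(\brak{v}^{-3}) + |\nabla\brak{v}^{-3/2}|^2 = -\tfrac34\brak{v}^{-5}+\tfrac{21}{4}\brak{v}^{-7}$. The $\brak{v}^{-5}$ contribution has a favourable sign and may be dropped, leaving the lower-order term $\tfrac{21}{4}\int g^2\brak{v}^{-7}\dd v$. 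With weight $\brak{v}^{-7}$ your H\"older now correctly yields $\bigl(\int|g|^6\brak{v}^{-9}\dd v\bigr)^{1/3}\bigl(\int\brak{v}^{-6}\dd v\bigr)^{2/3}$, the second factor is finite and small on $B_R^c$, and the remainder of your splitting/absorption argument closes exactly as you outlined.
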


The following lemma collects standard estimates for the coefficients $A$ and $\nabla a$ defined in \eqref{eq:coefficients}. The following lemma controls the degenerate diffusion associated to \eqref{eq:landau} and quantifies the parabolic nature associated to the Landau equation when written in the form \eqref{eq:landau}.
\begin{lemma}[From \protect{\cite[Lemma 3.1]{BedrossianGualdaniSnelson}}, \protect{\cite[Lemma 2.1]{GGZ}}, \protect{\cite[Lemma 3.2]{Silvestre}}]\label{lem:Alower}
Suppose $f\in L^1_2(\R^3)$ is non-negative, satisfies the normalization \eqref{eq:normalization}, and has finite Boltzmann entropy, $\int f\log(f) \le H$. Then, there is a constant $c_0 = c_0(H)$ such that $A[f]$ satisfies the pointwise bound,
\begin{equation*}
\abs{A[f]} \ge \frac{c_0}{1 + |v|^3}.
\end{equation*}
Furthermore, if $f\in L^p$ for $3/2 < p \le \infty$, then
\begin{equation*}
\|A[f]\|_{L^\infty} \le C\|f\|_{L^1}^{\frac{2}{3}\frac{p - \frac{3}{2}}{p - 1}}\|f\|_{L^p}^{\frac{1}{3} \frac{p}{p-1}} \qquad \text{and} \qquad \|\nabla a[f]\|_{L^3} \le C\|f\|_{L^1}^{\frac{2}{3}\frac{p - \frac{3}{2}}{p - 1}}\|f\|_{L^p}^{\frac{1}{3} \frac{p}{p-1}}.
\end{equation*}
Moreover, if $3 < p \le \infty$ and $1 < q < \infty$, then
\begin{equation*}
    \norm{\nabla a[f]}_{L^\infty} \le C\|f\|_{L^1}^{\frac{p-3}{3(p-1)}}\|f\|_{L^p}^{\frac{2p}{3(p - 1)}} \qquad \text{and} \qquad \norm{\nabla^2 a[f]}_{L^q} \le C \|f\|_{L^q}.
\end{equation*}
\end{lemma}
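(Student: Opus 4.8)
The plan is to prove Lemma \ref{lem:Alower} in several pieces, treating the lower bound separately from the upper bounds, since the lower bound uses the entropy while the upper bounds are pure interpolation/Calder\'on--Zygmund statements.

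\textbf{Lower bound on $A[f]$.} First I would exploit the structure $A[f] = \frac{\Pi(v)}{8\pi|v|} \ast f$. Since $\Pi(z)$ is a nonnegative symmetric matrix (it is the orthogonal projection onto $z^\perp$), one has for any fixed unit vector $e$ that $e^T A[f](v) e = \frac{1}{8\pi}\int \frac{|\Pi(v-v_*)e|^2}{|v-v_*|} f(v_*)\dd v_*\ge 0$. To get the quantitative decay rate $\langle v\rangle^{-3}$ one picks $e$ perpendicular to $v$ (or more carefully, picks the direction realizing the smallest eigenvalue and estimates it): for $|v|$ large, $|\Pi(v-v_*)e|$ is bounded below on a fixed-size region of $v_*$, and the weight $|v-v_*|^{-1}$ contributes a factor $\sim |v|^{-1}$. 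The difficulty is that $f$ could concentrate near $v_*=v$, which would make the projection $\Pi(v-v_*)$ degenerate and kill the lower bound. This is exactly where the entropy bound enters: $\int f\log f \le H$ together with the mass and energy normalization \eqref{eq:normalization} prevents concentration, via a standard argument (e.g.\ for any set $E$, $\int_E f \le \frac{H + C + \int_E \langle v\rangle^2 f}{\log(1/|E|)}$ by the Legendre-type inequality $ab \le a\log a - a + e^b$). So one splits the convolution integral into the region $\{|v_* - v| \le r\}$, which carries negligible mass uniformly by entropy, a far region $\{|v_*| \ge |v|/2\}$ controlled by energy, and a bulk region where $\Pi$ is nondegenerate and which carries mass bounded below (again by entropy, since the total mass is $1$ and the other two regions carry little). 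Choosing $r$ a small universal constant gives $c_0 = c_0(H)$.

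\textbf{Upper bounds.} For $\|A[f]\|_{L^\infty}$: since $|\Pi| \le 1$, $|A[f](v)| \le \frac{1}{8\pi}\int \frac{f(v_*)}{|v-v_*|}\dd v_* = \frac{1}{2}|\nabla a[f]|$-type quantity, i.e.\ it is bounded by the Riesz potential $I_2 f$ (convolution with $|v|^{-1}$). One splits $|v|^{-1} = |v|^{-1}\mathbbm{1}_{|v|\le R} + |v|^{-1}\mathbbm{1}_{|v|>R}$; the first piece lies in $L^{p'}$ precisely when $p' < 3$, i.e.\ $p > 3/2$, giving $\|\cdot\|_{L^{p'}(B_R)} \sim R^{3/p' - 1}$, and pairs with $\|f\|_{L^p}$; the second lies in $L^\infty$ with norm $R^{-1}$ and pairs with $\|f\|_{L^1}$. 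Optimizing over $R$ yields $R \sim (\|f\|_{L^1}/\|f\|_{L^p})^{\ldots}$ and the stated exponents $\frac{2}{3}\frac{p-3/2}{p-1}$ and $\frac{1}{3}\frac{p}{p-1}$ (which indeed sum to $1$, consistent with the scaling of $|v|^{-1}\ast$). The bound $\|\nabla a[f]\|_{L^3}$ is the Hardy--Littlewood--Sobolev inequality: $\nabla a[f] = \nabla(|v|^{-1}\ast f)/4\pi = c\, (v/|v|^3)\ast f$ is the Riesz transform-type convolution with a $-2$-homogeneous kernel, so $\|\nabla a[f]\|_{L^3} \lesssim \|f\|_{L^{3/2}}$ by HLS, and then one interpolates $\|f\|_{L^{3/2}} \le \|f\|_{L^1}^{\theta}\|f\|_{L^p}^{1-\theta}$ with $\theta$ chosen so $\frac{2}{3} = \theta + \frac{1-\theta}{p}$, which gives the claimed exponents. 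Similarly $\|\nabla a[f]\|_{L^\infty}$: split the kernel $|v|^{-2}$ at radius $R$, the inner part in $L^{p'}$ with $p'<3/2$ i.e.\ $p>3$, pair with $\|f\|_{L^p}$, outer part in $L^\infty$ scaling like $R^{-2}$, pair with $\|f\|_{L^1}$, optimize in $R$. Finally $\|\nabla^2 a[f]\|_{L^q} \le C\|f\|_{L^q}$ is the Calder\'on--Zygmund estimate: $\nabla^2 a[f] = \nabla^2(-\Delta)^{-1} f$ is a zeroth-order singular integral (Riesz transforms composed), bounded on $L^q$ for $1 < q < \infty$.

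\textbf{Main obstacle.} The only genuinely delicate point is the entropy-based lower bound on $A[f]$, specifically ruling out concentration of $f$ near $v_* = v$ where $\Pi(v-v_*)$ degenerates; everything else is bookkeeping with H\"older, Young, and HLS/Calder\'on--Zygmund. Since the statement cites \cite{BedrossianGualdaniSnelson}, \cite{GGZ}, and \cite{Silvestre}, I would in practice just quote those references for the lower bound and reproduce the interpolation computations for the upper bounds, double-checking that the exponents there are stated for the same normalization of $\mathcal Q$ used in \eqref{eq:q}.
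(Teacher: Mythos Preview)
Your proposal is correct. The paper does not actually prove this lemma: it is stated as a preliminary result and attributed directly to \cite{BedrossianGualdaniSnelson}, \cite{GGZ}, and \cite{Silvestre}, with no argument given in the text. Your sketch matches the standard arguments in those references: the entropy-based nonconcentration argument for the lower bound on $A[f]$ (which is indeed the only delicate point), the split-the-kernel-at-radius-$R$ interpolation for the $L^\infty$ bounds on $A[f]$ and $\nabla a[f]$, Hardy--Littlewood--Sobolev plus interpolation for $\|\nabla a[f]\|_{L^3}$, and Calder\'on--Zygmund for $\|\nabla^2 a[f]\|_{L^q}$. One small correction: for large $|v|$ the smallest eigenvalue of $A[f](v)$ is realized in the direction \emph{parallel} to $v$ (since $\Pi(v-v_*) \approx \Pi(v)$ annihilates $v$ when $v_*$ is bounded), not perpendicular; your parenthetical ``the direction realizing the smallest eigenvalue'' is the right formulation.
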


We will frequently use that \eqref{eq:landau} propagates $L^1$-moments of any order. More precisely, $L^1$-moments of any order $s > 2$ grow at most linearly in time.
\begin{lemma}[\protect{\cite[Lemma 2.1]{DHJ}}]\label{lem:moments_h}
Let $\ell > \frac{19}{2}$. Fix a non-negative initial datum $f_{in} \in L^1_\ell \cap L\log L$ such that $\int f_{in} \dd v = 1$, $\int f_{in} \abs{v}^2 \dd v = 3$. Suppose $f:\R^3 \rightarrow \R$ is any weak solution of \eqref{eq:landau} and let $h := f- \mu$. 
Then for all $\theta \in [0, \ell]$ and $ q< q_{\ell, \theta}$ with
\begin{equation}\label{eq:q_ltheta}
    q_{\ell, \theta} = -\frac{2\ell^2 - 25\ell +57}{18(\ell-2)} \Big(1-\frac{\theta}{\ell}\Big) + \frac{\theta}{\ell},
\end{equation}
there exists a constant $C_3 = C_3(\theta, \ell, K)$, where $K$ is such that $\norm{f_{in}}_{L^1_\ell} + \norm{f_{in}}_{L\log L} \leq K$,
such that 
\begin{equation}\label{eq:momentbound_h}
    \forall t \geq 0, \qquad \norm{h(t, \cdot)}_{L^1_\theta} \leq C_3(1+t)^q.
\end{equation}
\end{lemma}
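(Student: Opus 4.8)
The plan is to split \eqref{eq:momentbound_h} into a top-order bound and a bottom-order bound for $f$, and then interpolate between them. Since $\mu\in\S(\R^3)$ has finite $L^1$-moments of every order, the triangle inequality gives $\|h(t)\|_{L^1_\theta}\le\|f(t)\|_{L^1_\theta}+\|\mu\|_{L^1_\theta}$, so the top-order estimate reduces to a moment bound for $f$ itself, whereas the bottom-order estimate must come from the genuine relaxation of $f$ toward $\mu$. Concretely, I would establish: (i) the top-order growth $\|f(t)\|_{L^1_l}\le C(1+t)$, hence $\|h(t)\|_{L^1_l}\le C(1+t)$; and (ii) the bottom-order decay $\|h(t)\|_{L^1}\le C_\eps(1+t)^{-\frac{2l^2-25l+57}{18(l-2)}+\eps}$ for every $\eps>0$. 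Granting these, Lemma \ref{lem:interpolation} applied with $p_1=p_2=q=1$, $\alpha_1=l$, $\alpha_2=0$, and interpolation parameter $\theta/l$ gives
\[
\|h(t)\|_{L^1_\theta}\le\|h(t)\|_{L^1_l}^{\theta/l}\,\|h(t)\|_{L^1}^{1-\theta/l}\le C_\eps(1+t)^{\frac{\theta}{l}+\left(-\frac{2l^2-25l+57}{18(l-2)}+\eps\right)\left(1-\frac{\theta}{l}\right)},
\]
and the exponent here is exactly $q_{l,\theta}+\eps(1-\theta/l)$; letting $\eps\downarrow 0$ yields \eqref{eq:momentbound_h} for every $q<q_{l,\theta}$. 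The strictness of the inequality $q<q_{l,\theta}$ is precisely the price of the $\eps$-loss in (ii).

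Step (i) is the known propagation of $L^1$-moments for the Landau--Coulomb equation. Testing the weak form of \eqref{eq:landau1} against $\langle v\rangle^l$ and symmetrizing in $(v,v_*)$, one uses that the Coulomb kernel $|v-v_*|^{-1}$ is locally integrable and that the matrix $\Pi(v-v_*)$ suppresses the leading radial contribution, so that, together with conservation of mass and energy, the gain term is controlled by weights of strictly lower order; this produces $\frac{\dd}{\dd t}\|f\|_{L^1_l}\le C_l$, hence $\|f(t)\|_{L^1_l}\le\|f_{in}\|_{L^1_l}+C_l t$. This is the moment estimate of \cite{desvillettesHsols,DCH}, which I would quote rather than reprove.

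Step (ii) is the heart of the matter, and is where the hypothesis $l>\frac{19}{2}$ and the precise algebraic form of $q_{l,0}$ in \eqref{eq:q_ltheta} enter. Here I would run the nonlinear entropy method of \cite{DCH,DHJ}: combine the $H$-theorem $\frac{\dd}{\dd t}H(f\mid\mu)=-D(f)$, where $H(f\mid\mu)=\int f\log(f/\mu)\,\dd v$ is the relative entropy and $D(f)\ge0$ the Landau entropy dissipation, with a Desvillettes-type entropy-production inequality $D(f)\ge c\,\kappa(f)\,H(f\mid\mu)^{1+\eps}$ valid for every $\eps>0$, in which $\kappa(f)$ is bounded below by a negative power of a sufficiently high $L^1$-moment of $f$. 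Feeding in Step (i) turns this into a closed differential inequality $\frac{\dd}{\dd t}H(f\mid\mu)\le-c(1+t)^{-\beta(\eps)}H(f\mid\mu)^{1+\eps}$, which integrates to $H(f\mid\mu)(t)\le C(1+t)^{-\rho(\eps,l)}$; optimizing over the admissible $\eps$ produces the exponent $\frac{2l^2-25l+57}{18(l-2)}$ up to an $\eps$-loss, and the Csisz\'ar--Kullback--Pinsker inequality $\|f(t)-\mu\|_{L^1}^2\le2H(f\mid\mu)(t)$ converts this into the $L^1$-decay of $h$ claimed in (ii).

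The genuine obstacle is Step (ii): carefully tracking the competition between the feedback exponent $1+\eps$ and the polynomial-in-$t$ loss $\beta(\eps)$ inherited from the moment growth, so that the optimization returns exactly the quadratic-in-$l$ rate with threshold $l=\frac{19}{2}$. By contrast, the reduction to $f$, the moment estimate of Step (i), and the final interpolation are all routine once the cited results are in hand.
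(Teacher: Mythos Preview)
The paper does not give its own proof of this lemma; it appears in Section~\ref{sec:prelims} among the preliminary results quoted without proof from \cite[Lemma 2.1]{DHJ}. Your outline---linear-in-time growth of the top moment $\|f(t)\|_{L^1_l}$ from \cite{desvillettesHsols,DCH}, entropy--entropy-dissipation decay of $\|h(t)\|_{L^1}$ via the $H$-theorem and Csisz\'ar--Kullback--Pinsker, then weighted $L^1$-interpolation---is precisely the strategy carried out in \cite{DHJ}, so there is nothing substantive to compare.
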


\section{Local-in-Time Propagation of \texorpdfstring{$L^p$}{Lp} Norms}\label{sec:loc}

In this section, we study the evolution of $L^p$-norms for $\frac{3}{2} < p < \infty$ of $h = f - \mu$, where $f$ is a solution to \eqref{eq:landau} and $\mu$ is the corresponding Maxwellian. We show that the $L^p$-norms of initial datum $h_{in}$ are propagated by smooth solutions of \eqref{eq:landau}, at least for some short time interval $[0,T_0]$. Since we work with smooth solutions, the calculations in this section are rigorous estimates. We note that for $p = 2$ a version of the following estimate has previously appeared as an a-priori bound in \cite{AlexandreLiaoLin}. 

\begin{lemma}\label{lem:perthame}
    Fix $\frac{3}{2} < p < \infty$, $m > \frac{9}{2}\frac{p-1}{p - \frac{3}{2}}$, $H \in \R$ and $M \in \R^+$. Suppose $f_{in}$ is a non-negative Schwartz class function, satisfying the normalization \eqref{eq:normalization} and the bounds
    \begin{equation*}
        \norm{f_{in}}_{L^1_m} \le M \qquad \text{and} \qquad \int_{\R^3} f_{in} \abs{\log(f_{in})} \;dv \le H.
    \end{equation*}
    Let $f:[0,T^*)\rightarrow \R^+$ be the unique Schwartz class solution to \eqref{eq:landau} with initial datum $f_{in}$ on its maximal interval of existence $[0,T^*)$. Then, there is an $\eps_0(m,p,H,M)$ sufficiently small such that for any $0 < \eps < \eps_0$, there are $\delta = \delta(\eps, m, p, H, M)$ and $T_0 = T_0(\eps, m, p, H, M) \in (0,1]$ such that for $h := f - \mu$, if
    $\|h_{in}\|_{L^p}^p < \delta$ then \begin{equation}\label{eq:apriori}
        \sup_{0 < t < \min(T_0,T^*)} \left(\|h(t)\|_{L^p}^p + \frac{c_0}{2}\int_{0}^{t}\int_{\R^3} \brak{v}^{-3}|\nabla h^{p/2}|^2 \;dvds \right) \le \eps,
    \end{equation}
    where $c_0 = c_0(H)$ is the constant from Lemma \ref{lem:Alower}. Moreover, $T_0$ may be chosen such that
    \begin{equation*}
    C(m,p,H,M)\eps^{1-\alpha} \le T_0,
    \end{equation*}
where $\alpha = \left(1 - \frac{1}{p} + \frac{1}{3(p-1)}\right) \in (0, 1)$.
\end{lemma}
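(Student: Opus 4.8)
The plan is to derive a closed differential inequality for $y(t):=\norm{h(t)}_{L^p}^p$ in which the Landau diffusion is coercive and every other term is controlled by a \emph{sublinear} power of $y$, and then integrate it by a continuity argument. Since $\mu$ is a steady state, $h:=f-\mu$ solves
\begin{equation*}
\partial_t h=\nabla\cdot\big(A[f]\nabla h-\nabla a[f]\,h\big)+A[h]:\nabla^2\mu+h\mu .
\end{equation*}
Testing with $\abs{h}^{p-2}h$ (legitimate for the Schwartz solution $f$ and $p>1$), integrating by parts in the divergence-form terms and using $\Delta a[f]=-f$ gives
\begin{equation*}
\tfrac{1}{p}\tfrac{\dd}{\dd t}\norm{h}_{L^p}^p+\tfrac{4(p-1)}{p^2}\int_{\R^3}A[f]\,\nabla\abs{h}^{p/2}\cdot\nabla\abs{h}^{p/2}\dd v = I_1+I_2+I_3,
\end{equation*}
with $I_1=\tfrac{p-1}{p}\int\abs{h}^{p+1}$, $I_2=\tfrac{2p-1}{p}\int\abs{h}^p\mu$, $I_3=\int\abs{h}^{p-2}h\,\big(A[h]:\nabla^2\mu\big)$. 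Since $f$ satisfies \eqref{eq:normalization} and has bounded entropy, Lemma~\ref{lem:Alower} bounds the diffusion term below by a coercive contribution $\gtrsim c_0\,G(t)$, where $G(t):=\int_{\R^3}\brak{v}^{-3}\abs{\nabla\abs{h}^{p/2}}^2\dd v$.

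The heart of the argument is $I_1$. Interpolating with Lemma~\ref{lem:interpolation}, we write $\norm{h}_{L^{p+1}}$ as a product of $\norm{\brak{v}^{-3/p}h}_{L^{3p}}$, $\norm{h}_{L^p}$ and $\norm{\brak{v}^{m_0}h}_{L^1}$ with $m_0:=\tfrac92\tfrac{p-1}{p-3/2}<m$; the three exponents are then forced, and choosing the one on the high norm equal to $p/(p+1)$ makes $(\brak{v}^{-3/p}\abs{h})^{3p}=\brak{v}^{-3}(\abs{h}^{p/2})^6$ precisely the left-hand side of Lemma~\ref{lem:poincare} applied to $g=\abs{h}^{p/2}$ with $s=3/2$; one further interpolation of $\norm{h}_{L^{3p/4}}$ between $L^1$ and $L^p$ then yields
\begin{equation*}
I_1\le C\,G(t)\,\norm{h}_{L^p}^{\frac{p}{3(p-1)}}\norm{\brak{v}^{m_0}h}_{L^1}^{\frac23\frac{p-3/2}{p-1}}+C\,\norm{h}_{L^p}^p\,\norm{\brak{v}^{m_0}h}_{L^1}.
\end{equation*}
For $I_2$ we simply use $\mu\in L^\infty$, so $I_2\lesssim\norm{h}_{L^p}^p$. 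For $I_3$ we bound by H\"older (with $\nabla^2\mu$ fixed in every $L^q$) by $\norm{A[h]}_{L^\infty}\norm{h}_{L^p}^{p-1}$, and the $L^\infty$-estimate for $A[h]$ in Lemma~\ref{lem:Alower} together with $\norm{h}_{L^1}\le 2$ (conservation of mass) produces the power $(p-1)+\tfrac13\tfrac{p}{p-1}=p\alpha$ of $\norm{h}_{L^p}$, with $\alpha=1-\tfrac1p+\tfrac1{3(p-1)}\in(0,1)$, i.e. the key bound $\abs{I_3}\lesssim\norm{h}_{L^p}^{p\alpha}$. Finally Lemma~\ref{lem:moments_h} gives $\sup_{[0,1]}\norm{\brak{v}^{m_0}h}_{L^1}\le\overline{M}(m,p,H,M)$.

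Collecting these estimates, there is a fixed threshold $y_*=y_*(m,p,H,M)>0$ — namely where $C\,y^{1/(3(p-1))}\overline{M}^{\frac23\frac{p-3/2}{p-1}}\le c_0/2$ — such that, as long as $y(t)\le y_*$, the $G$-proportional part of $I_1$ is absorbed and
\begin{equation*}
\frac{\dd y}{\dd t}+\frac{c_0}{2}G(t)\le C\big(y^\alpha+y\big)\le 2C\,y^\alpha ,
\end{equation*}
the last step using $y\le 1$ and $\alpha<1$. Dropping $G\ge0$ and integrating $\tfrac{\dd}{\dd t}y^{1-\alpha}\le 2C(1-\alpha)$ gives $y(t)^{1-\alpha}\le y(0)^{1-\alpha}+2C(1-\alpha)t$. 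Now fix $\eps_0$ small (so $\eps_0\le\min(y_*,1)$ and $C\eps_0^{1-\alpha}$ small), and for $0<\eps<\eps_0$ set $\delta:=2^{-1/(1-\alpha)}\eps$ (so $\delta^{1-\alpha}=\eps^{1-\alpha}/2$ and $\delta\le\eps/2$) and $T_0:=\eps^{1-\alpha}/(4C)\in(0,1]$. A standard continuity argument — using that $t\mapsto y(t)$ is continuous for the Schwartz solution and that $y(0)<\delta<y_*$ — shows $y(t)\le\eps<y_*$ throughout $[0,\min(T_0,T^*))$, so the displayed inequality remains valid there; integrating it once more over $[0,t]$ gives $y(t)+\tfrac{c_0}{2}\int_0^t G\le\delta+2CT_0\eps^\alpha\le\eps$, which is exactly \eqref{eq:apriori}, and by construction $C'(m,p,H,M)\,\eps^{1-\alpha}\le T_0$.

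The main obstacle is the exponent bookkeeping in $I_1$: the interpolation–Sobolev chain must be tuned so that $G(t)$ re-emerges multiplied by a \emph{strictly positive} power of $\norm{h}_{L^p}$ — hence absorbable once $h$ is small — while the accompanying moment weight stays no larger than $m_0=\tfrac92\tfrac{p-1}{p-3/2}$, which is precisely where the hypothesis $m>\tfrac92\tfrac{p-1}{p-3/2}$ is used. The second, conceptually more important, point is that the term $I_3$ (the nonlinear interaction with $\mu$) contributes the \emph{sublinear} power $\norm{h}_{L^p}^{p\alpha}$ with $\alpha<1$, rather than a term of the form $\norm{h}_{L^p}^p$ plus an $O(1)$ constant; it is exactly this gain that upgrades the naive propagation time $\sim\eps$ into $\sim\eps^{1-\alpha}$.
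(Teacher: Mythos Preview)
Your proof is correct and follows essentially the same approach as the paper: the same equation for $h$, the same test function and decomposition into $I_1,I_2,I_3$, the same interpolation--weighted-Sobolev chain (Lemma~\ref{lem:poincare} with $s=3/2$) for $I_1$, and the same use of Lemma~\ref{lem:Alower} to extract the sublinear power $y^{\alpha}$ from $I_3$. The only cosmetic difference is in closing the ODE: the paper works with the integral form of $y'\le -GN(y)+\tilde C(\overline M+1)y+\tilde Cy^{\alpha}$ and a continuity argument, whereas you first simplify $y+y^{\alpha}\le 2y^{\alpha}$ on $\{y\le 1\}$ and explicitly integrate $(y^{1-\alpha})'\le 2C(1-\alpha)$ before re-integrating to recover the $G$-term; both routes give the same $T_0\sim\eps^{1-\alpha}$.
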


\begin{remark}
In the proof of Lemma \ref{lem:perthame}, we study the evolution of the $L^p$-norm of $h$ via an ODE method. The $L^p$ norm satisfies a differential inequality of the form,
\begin{equation*}
y^\prime\le y + y^\alpha, \qquad \alpha \in (0,1).
\end{equation*}
For small data, the sublinear evolution dominates and restricts our estimate to a small time interval (of order $\eps^{1-\alpha}$). 
Lemma \ref{lem:perthame} provides a quantitative estimate on how \eqref{eq:landau} preserves $L^p$-smallness for a short amount of time. This is the first step towards a global-in-time existence result for small $L^p$ initial data. 
\end{remark}

\begin{proof}
We start by noting that $\mu$ is a steady state solution to \eqref{eq:landau}, so that $h$ satisfies
\begin{equation}\label{eq:landau_h}
\begin{aligned}
    \partial_t h 
        &= \nabla \cdot \left(A[f]\nabla h - \nabla a[f] h\right) + A[h]:\nabla^2\mu + h\mu.
\end{aligned}
\end{equation}
Testing \eqref{eq:landau_h} with $h^{p-1}$, integrating by parts, and rearranging yields
\begin{equation*}
\begin{aligned}
\frac{1}{p}\frac{\dd\|h(t)\|_{L^p}^p}{\dd t} &+ \frac{4(p-1)}{p^2}\int_{\R^3} A[f]\nabla h^{p/2}\cdot \nabla h^{p/2} \dd v\\
    &= \frac{p-1}{p}\int_{\R^3} h^{p+1}\dd v + \frac{2p-1}{p}\int_{\R^3} h^{p}\mu \dd v +\int_{\R^3} h^{p-1}\left(A[h]:\nabla^2\mu\right)\dd v\\
    &= I_1 + I_2 + I_3.
\end{aligned}
\end{equation*}
Beginning with $I_1$, we use H\"older and a weighted Sobolev inequality to get:
\begin{equation*}
\begin{aligned}
\int_{\R^3} h^{p+1} \dd v &= \int_{\R^3} \left(\frac{h^{p/2}}{\brak{v}^{3/2}}\right)^2 \brak{v}^3 h \dd v \le \left(\int_{\R^3} \left(h^{p/2}\right)^{6}\brak{v}^{-9} \dd v \right)^{1/3}\left(\int_{\R^3} h^{3/2}\brak{v}^{9/2} \dd v\right)^{2/3}\\
    &\le \left[C_1\int_{\R^3} \abs{\nabla h^{p/2}}^2\brak{v}^{-3} \dd v + C_2\left(\int_{\R^3} h^{3p/4} \dd v\right)^{4/3} \right]\left(\int_{\R^3} h^{3/2}\brak{v}^{9/2} \dd v\right)^{2/3},
\end{aligned}
\end{equation*}
using Lemma \ref{lem:poincare} with $s = 3/2$. H\"older's inequality and that $p>3/2$ yield:
\begin{align*}
    \left(\int_{\R^3} h^{3/2}\brak{v}^{9/2}\dd v\right)^{2/3} &\le  \left(\int_{\R^3} h^p \dd v\right)^{\frac{1}{3(p-1)}}\left(\int_{\R^3} h \brak{v}^{\frac{9}{2}\frac{p-1}{p-3/2}}\dd v\right)^{\frac{2}{3}\frac{p-3/2}{p-1}}\\
    \left(\int_{\R^3} h^{3p/4}\dd v\right)^{4/3} &\le  \left(\int_{\R^3} h^p \dd v\right)^{\frac{p - 4/3}{(p-1)}}\left(\int_{\R^3} h \dd v\right)^{\frac{p}{3(p-1)}}
\end{align*}
Summarizing, the term $I_1$ is bounded as
\begin{equation*}
I_1 \le C_1\norm{\brak{v}^{-3/2}\nabla h^{p/2}}_{L^2}^{2}\|h\|_{L^p}^{\frac{p}{3(p-1)}}\|h\brak{v}^m\|_{L^1}^{\frac{2}{3}\frac{p -\frac{3}{2}}{p - 1}} + C_2 \|h\|_{L^p}^p\|h\brak{v}^m\|_{L^1}.    
\end{equation*}
The term $I_2$ is bounded via
\begin{equation*}
    I_2 \le 2\norm{h}_{L^p}^p\|\mu\|_{L^\infty}.
\end{equation*}
The term $I_3$ is bound using H\"older's inequality, 
Lemma \ref{lem:Alower}, and conservation of mass, as
\begin{equation*}
    I_3 \le \|A[h]\|_{L^\infty}\|h\|_{L^p}^{p-1}\|\mu\|_{W^{2,p}} \leq\|\mu\|_{W^{2,p}}\|h\|_{L^p}^{p-1}\|h\|_{L^1}^{1 - \frac{p}{3(p-1)}}\|h\|_{L^p}^{\frac{p}{3(p-1)}}\le C(p)\|h\|_{L^p}^{p\left(1 - \frac{1}{p} + \frac{1}{3(p-1)}\right)}.
\end{equation*}
In summary, we have shown
\begin{equation*}
\begin{aligned}
&\frac{\dd}{\dd t} \|h(t)\|_{L^p}^p + \int_{\R^3} A[f]\nabla h^{p/2}\cdot \nabla h^{p/2} \;\dd v \\
&\quad\le C(p) \left(\left(\int_{\R^3} \brak{v}^{-3}\abs{\nabla h^{p/2}}^2 \dd v \right)\|h\|_{L^p}^{\frac{p}{3(p-1)}}\|h\brak{v}^m\|_{L^1}^{\frac{2}{3}\frac{p -\frac{3}{2}}{p - 1}} + \norm{h}_{L^p}^p\norm{h\brak{v}^m}_{L^1} + \|h\|_{L^p}^p + \|h\|_{L^p}^{p\alpha}\right),
\end{aligned}
\end{equation*}
where $\alpha = \left(1 - \frac{1}{p} + \frac{1}{3(p-1)}\right)$.
The coercivity estimate from Lemma \ref{lem:Alower} and the moment estimates from Lemma \ref{lem:moments_h} then imply that
there are constants $\overline{M} = \overline{M}(M, m, H)$ and $c_0 = c_0(H)$ for which
\begin{equation*}
\frac{c_0}{\brak{v}^3} \le A[f] \qquad \text{and} \qquad  \sup_{0 < t < \min(T^*,1)} \|h(t)\|_{L^1_m} \le \overline{M},
\end{equation*}
and thus for a fixed constant $\tilde C = \tilde C(p)$, we have
\begin{equation}\label{eq:differential_form_explicit}
\begin{aligned}
\frac{\dd}{\dd t}\|h\|_{L^p}^p
&\le -\left(\int_{\R^3}|\nabla h^{p/2}|^2\brak{v}^{-3} \;\dd v\right)\left(c_0 - \tilde{C}\|h\|_{L^p}^{\frac{p}{3(p-1)}}\overline{M}^{\frac{2}{3}\frac{p -\frac{3}{2}}{p - 1}}\right) + \tilde C(\overline{M} + 1)\|h\|_{L^p}^p + \tilde C\|h\|_{L^p}^{p\alpha},
\end{aligned}
\end{equation}
for $0 \le t < \min(1,T^*)$. Introducing $y$, $G$ and $N$ as
\begin{equation*}
y(t) := \|h(t)\|_{L^p}^p \quad \text{and} \quad G(t) := \left(\int_{\R^3}|\nabla h^{p/2}|^2\brak{v}^{-3} \;\dd v\right) \quad \text{and} \quad N(y) := \left(c_0 - \tilde {C} y^{\frac{1}{3(p-1)}}\overline{M}^{\frac{2}{3}\frac{p -\frac{3}{2}}{p - 1}}\right),
\end{equation*}
we have shown in \eqref{eq:differential_form_explicit} that $y$ satisfies the differential inequality
\begin{equation}\label{eq:differential_form}
\begin{aligned}
\frac{\dd y}{\dd t} \le -G(t)N(y)  + \tilde{C}(\overline{M} + 1)y + \tilde{C}y^\alpha.
\end{aligned}
\end{equation}
We prefer to study \eqref{eq:differential_form} in integral form: Integrating from $0$ to $t$, we find for any $0 < t < \min(T^*,1)$,
\begin{equation}\label{eq:integral_form}
y(t) \le y_0 + \tilde{C}\int_0^t (\overline{M} + 1)y(s) + y(s)^\alpha\;ds - \int_0^t G(s)N(y(s)) \;ds.
\end{equation}
We are now ready to fix our choice of parameters $\eps_0$, $T_0$, and $\delta$. First, we choose $\eps_0$ as
\begin{equation*}
\eps_0 = \left(\frac{c_0^{3(p-1)}}{(2\tilde C)^{3(p-1)}\overline{M}^{2p-3}}\right), \qquad \text{so that} \qquad N(y) > \frac{c_0}{2} \quad \text{ if and only if } \quad y < \eps_0.
\end{equation*}
Second, for any $\eps \in (0,\eps_0)$, we pick $T_0$ and $\delta$ as
\begin{equation}\label{eq:constraint}
T_0 = (3\tilde C)^{-1} \min\left(\left(\overline{M}+1\right)^{-1}, \eps^{1-\alpha}\right)  \qquad \text{and} \qquad \delta < \eps/3.
\end{equation}
The choice of $\delta$ guarantees $y_0 < \eps$ and consequently $N(y_0) > c_0/2$. 
Therefore, if $y_0 < \delta$, by continuity of $y$, either $y(t) < \eps$ for all $0 \le t \le \min(T^*,1)$ or there is a first time $t_0 \in (0,\min(T^*,1))$ such that $y(t_0) = \eps$.
By definition of $t_0$, we then have $y(t) < \eps$ and consequently $N(y(t)) > c_0/2$ for $0 \le t < t_0$. So, for any $0 < t < \min(t_0,T_0)$, we combine \eqref{eq:integral_form} with the choice of $T_0$ in \eqref{eq:constraint} and the non-negativity of $G$ to find
\begin{equation}\label{eq:diff_bound}
y(t) < \eps/3 + \tilde{C}(\overline{M} + 1)\eps t + \tilde{C}\eps ^\alpha t - \frac{c_0}{2}\int_0^t G(s) \;dt < \eps.
\end{equation}
We find $t_0 \ge T_0$ and rearranging \eqref{eq:diff_bound} yields the desired estimate
\begin{equation*}
\sup_{0 < t < \min(T_0,T^*)} y(t) + \frac{c_0}{2} \int_0^{\min(T_0,T^*)} G(s) \;\dd t < \eps.
\end{equation*}
The explicit choice of $T_0$ in \eqref{eq:constraint} yields the claimed asymptotic behavior.
\end{proof}

\section{Quantitative \texorpdfstring{$L^\infty$}{L infty} Regularization}\label{sec:DeGiorgi}
In this section, we prove a quantitative regularization estimate for the $L^\infty$ norm of smooth solutions to \eqref{eq:landau}. More precisely, we show that for smooth solutions to \eqref{eq:landau}, the $L^\infty$ norm is controlled by the energy functional that appears in \eqref{eq:apriori}. Our ultimate goal is to construct solutions to \eqref{eq:landau} for initial data that are close to equilibrium in $L^p$, even for rough profiles. Hence, we ensure that the estimate depends only on $L^1$ moments and entropy, making the estimate independent of the solution's smoothness. The proof, inspired by \cite{ABDL}, is a modification of the iteration procedure initially introduced by De Giorgi in \cite{DeGiorgi} for the study of elliptic equations. A similar version of our estimates has already been obtained by Silvestre in \cite{Silvestre}. One could possibly replace the forthcoming estimates with Silvestre's estimates to obtain a similar result to Theorem \ref{thm:existence}, where smallness is instead required in a weighted $L^p$ space. We prefer to assume smallness in the strictly weaker unweighted $L^p$ norm and so prove our own regularization estimates:

\begin{proposition}\label{prop:degiorgi}
Fix $p > \frac{3}{2}$, $m > \frac{9}{2}\frac{p-1}{p - \frac{3}{2}}$, and $H \in \R$. Suppose $f_{in}$ is a non-negative Schwartz class function, satisfying the normalization \eqref{eq:normalization} and the bound
\begin{equation*}
    \int_{\R^3} f_{in} \abs{\log(f_{in})} \;dv \le H.
\end{equation*}
Let $f:[0,T^*)\rightarrow \R^+$ be the unique Schwartz class solution to \eqref{eq:landau} with initial datum $f_{in}$ and $T^* > 0$ its maximal time of existence.
Then, for $\gamma$, $\beta_0$, $\beta_1$ and $\beta_2$, defined in terms of $m$ and $p$ via
\begin{equation}\label{eq:exponents}
\beta_0 = \frac{1}{3(p-1)}, \qquad \beta_1 = \frac{2}{3} - \frac{3}{m}, \qquad \beta_2 = \frac{3}{m}, \qquad \text{and}\qquad \gamma = \frac{2p -3}{3m}\left[m - \frac{9(p-1)}{2p-3}\right],
\end{equation}
there is a constant $C = C(p,m,H)$ such that for any $0 < t < \min(1, T^*)$ there holds
\beqs
\norm{h(t)}_{L^\infty(\R^3)} \leq C \left(E_0^{\frac{\beta_1}{\gamma}} \mathcal M^{\frac{\beta_2}{\gamma}} + E_0^{\frac{\beta_1}{1 + \gamma}} \mathcal M^{\frac{\beta_2}{1+\gamma}} + E_0^{\frac{\beta_1}{2 + \gamma}}  \mathcal M^{\frac{\beta_2}{2+\gamma}}+ E_0^{\frac{\beta_0 + \beta_1}{2 + \gamma}} \mathcal M^{\frac{\beta_2}{2+\gamma}} + E_0^{\frac{\beta_1}{1 + \gamma}}t^{-\frac{1}{1 + \gamma}} \mathcal M^{\frac{\beta_2}{1+\gamma}}\right),
\eeqs
where $\mathcal M$ and $E_0$ are given by
\begin{equation*}
\mathcal{M} := \sup_{0 < t < \min(1, T^*)} \|h(t)\|_{L^1_m}, \qquad E_0 := \sup_{0 < t < \min(1, T^*)} \norm{h(t)}_{L^p}^p + \int_{0}^{T^*}\int_{\R^3} \langle v\rangle^{-3}\abs{\nabla h^{p/2}}^2\dd v \dd t.
\end{equation*}
\end{proposition}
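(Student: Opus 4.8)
Since $f \ge 0$ we have $h \ge -\mu$ pointwise, so $\|h(t)\|_{L^\infty} \le \max\big(\|h_+(t)\|_{L^\infty},\|\mu\|_{L^\infty}\big)$ and it suffices to bound $h$ from above. Fix $t \in (0,T^*)$. I would run a parabolic De Giorgi iteration on shrinking time-slabs $(t - r_k, t] \times \R^3$ with $r_0 = t$, $r_k = \tfrac t2(1 + 2^{-k}) \searrow \tfrac t2$, at increasing levels $\ell_k = L(1 - 2^{-k}) \nearrow L$ for a threshold $L \ge \|\mu\|_{L^\infty} + 1$ to be chosen. With $h_k := (h - \ell_k)_+$ and $w_k := h_k^{p/2}$, the iteration quantity is the natural weighted parabolic energy
\[
U_k := \sup_{t - r_k \le s \le t}\|h_k(s)\|_{L^p}^p + \int_{t - r_k}^{t}\!\!\int_{\R^3}\langle v\rangle^{-3}\,|\nabla w_k|^2\,\dd v\,\dd s,
\]
and since $(0,t] \subset (0,T^*)$ and $h_+ \le |h|$, we have $U_0 \le E_0$. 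The goal is to choose $L$ so that $U_k \to 0$, which forces $h \le L$ on $(\tfrac t2, t]\times\R^3$, hence $\|h(t)\|_{L^\infty} \le L$.

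\textbf{Energy (Caccioppoli) inequality.} Testing \eqref{eq:landau_h} against $\chi_k(s)\,h_k^{p-1}$, where $\chi_k$ is a time cutoff equal to $1$ on $(t - r_{k+1}, t]$, vanishing for $s \le t - r_k$, with $|\chi_k'| \lesssim 2^k/t$, and integrating by parts, the degenerate ellipticity bound $A[f] \ge c_0\langle v\rangle^{-3}$ of Lemma \ref{lem:Alower} produces the good term in $U_k$. The term from $\nabla\!\cdot\!(\nabla a[f]\,h)$ becomes $\tfrac{p-1}{p}\int_{\{h>\ell_k\}} (h+\mu)\,h_k^p$ after using $-\Delta a[f] = f$; the drift $A[h]\!:\!\nabla^2\mu$ is handled with $\|A[h]\|_{L^\infty} \lesssim 1 + \|h_+\|_{L^1}^{\frac23\frac{p-3/2}{p-1}}\|h_+\|_{L^p}^{\frac13\frac{p}{p-1}}$ from Lemma \ref{lem:Alower}; and the time cutoff contributes $\tfrac{2^k}{t}\int_{t-r_{k-1}}^t\!\int h_{k-1}^p$. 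Writing $h = h_{k-1} + \ell_{k-1}$ on $\{h > \ell_{k-1}\}$ and using the moment bound $\|h(s)\|_{L^1_m} \lesssim (1+t)$ of Lemma \ref{lem:moments_h}, this reduces to an estimate of the schematic form
\[
U_k \lesssim (1+t)^{a}\Big(1 + \tfrac{2^k}{t}\Big)\sum_{(q,b)} \ell_{k-1}^{\,p-q}\!\!\int_{t - r_{k-1}}^t\!\!\int_{\{h > \ell_{k-1}\}} h_{k-1}^{\,q}\,\langle v\rangle^{b}\,\dd v\,\dd s,
\]
over a finite list of pairs $(q,b)$ that includes the "critical" one $q = p+1,\ b = 0$ (the analog of the term $I_1$ in the proof of Lemma \ref{lem:perthame}).

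\textbf{Gain of integrability and the recursion.} From $U_k$, Lemma \ref{lem:poincare} with $s = 2$ and $g = w_k$ gives $\int_{t-r_k}^t\big(\int w_k^6\langle v\rangle^{-9}\big)^{1/3}\,\dd s \lesssim (1+t)\,U_k$; interpolating this $L^1_t L^6_x$-bound against $L^\infty_t L^2_x$ in the weighted scale yields a space-time bound $\int\!\int w_k^{2(1+\kappa)}\langle v\rangle^{-\tau} \lesssim (1+t)\,U_k^{1+\kappa}$ with gain exponent $\kappa$ of order $\tfrac23$. On $\{h > \ell_{k+1}\}$ one has $h_{k} \ge L2^{-k-1}$, so indicators and sub-critical powers trade for higher powers via $\mathbbm{1}_{\{h>\ell_{k+1}\}} \le (L2^{-k-1})^{-\nu}h_k^{\nu}$; combining this with Hölder's inequality, the weighted interpolation of Lemma \ref{lem:interpolation}, and the moment bounds (to absorb the negative weight $\langle v\rangle^{-\tau}$ at the price of $(1+t)$-powers), every term on the right is dominated by $U_{k-1}^{1+\gamma'}$. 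The critical term $q = p+1$ closes precisely because $p + 1 < p(1+\kappa)$, i.e. because $p > \tfrac32$, and the leftover linear-in-$\ell_{k-1}$ contributions are absorbed because $\ell_{k-1}(L2^{-k})^{-\nu} = L^{1-\nu}(\cdots)\to 0$ for $\nu > 1$. The gain $\gamma'$ is \emph{strictly positive} provided $m$ is large enough (this is exactly the threshold $m > \tfrac92\tfrac{p-1}{p-3/2}$, equivalently $\gamma > 0$ in \eqref{eq:exponents}), yielding a recursion
\[
U_{k+1} \le C_0\,B^{k}\,(1+t)^{b}\,t^{-1}\,L^{-\lambda}\,U_k^{1+\gamma'}, \qquad C_0,B>1,\ \lambda,\gamma'>0 .
\]
The standard fast-geometric-decay lemma then gives $U_k \to 0$ as soon as $L^{\lambda} \gtrsim C_0 B^{1/\gamma'}(1+t)^{b}t^{-1}U_0^{\gamma'}$, and since $U_0 \le E_0$ it suffices to impose $L^{\lambda} \gtrsim (1+t)^{b}t^{-1}E_0^{\gamma'}$. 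Each $(q,b)$ in the list forces a constraint of this shape with its own $(\lambda,\gamma')$; tracking the bookkeeping, the relevant values are $\lambda \in \{\gamma,\,1+\gamma,\,2+\gamma\}$ and $\gamma' \in \{\beta_1,\,\beta_1+\beta_0\}$ with $\beta_0,\beta_1,\beta_2,\gamma$ as in \eqref{eq:exponents}, while the moment growth supplies powers of $(1+t)$ compatible with the stated $(1+t)^{(1+\beta_2)/\gamma}$. Choosing $L$ equal to the sum of the resulting lower bounds makes all constraints hold at once, so $U_\infty = 0$, $h(t) \le L$, and the claimed estimate follows (the $t^{-1/(\gamma+1)}$-term being the contribution of the time-cutoff constraint, with $\lambda = 1+\gamma$).

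\textbf{Main obstacle.} The delicate part is the weighted bookkeeping: because the diffusion is only $\langle v\rangle^{-3}$-elliptic one cannot use the ordinary Sobolev inequality and must lean on Lemma \ref{lem:poincare}, whose gain carries the weight $\langle v\rangle^{-9}$; balancing how much of this weight to pay off with the (polynomially growing) $L^1$-moments of $h$ against the demand of a strictly positive iteration gain $\gamma' > 0$ is exactly what pins down the threshold on $m$ and the precise exponents $\beta_0,\beta_1,\beta_2,\gamma$. Extracting a \emph{quantitative} bound — rather than a qualitative $L^\infty$-regularization — additionally requires carrying the explicit $t$-dependence through the time cutoffs and the moment estimates, which is the source of the $(1+t)^{(1+\beta_2)/\gamma}$ prefactor and of the singular $t^{-1/(\gamma+1)}$ term as $t \downarrow 0$.
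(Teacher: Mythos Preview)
Your proposal is correct and follows essentially the same De Giorgi scheme as the paper: the same level-set energy $U_k$, the same passage from level $\ell_k$ to $\ell_{k-1}$ via $\mathbbm{1}_{\{h>\ell_k\}}\le (h_{k-1}^+/(\ell_k-\ell_{k-1}))^\nu$, the same use of Lemma~\ref{lem:poincare} combined with weighted interpolation against $L^1_m$-moments to produce the superlinear gain $U_k^{1+\beta_1}$, and the same identification of the constraints $\lambda\in\{\gamma,1+\gamma,2+\gamma\}$ that yield the several $E_0$-powers in the final bound. The only cosmetic difference is that you localize in time with a smooth cutoff $\chi_k$ whereas the paper integrates the differential form of the energy inequality and then averages over the initial time (Lemma~\ref{lem:ElEk}); both produce the same $2^k/t$ contribution responsible for the $t^{-1/(1+\gamma)}$ term.
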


The rest of this section is devoted to the proof of Proposition \ref{prop:degiorgi}, which consists of three steps. In Step 1 and Step 2 we derive an energy estimate relating the energy of different level sets. In Step 3, we iterate this inequality to conclude the proof.

\begin{flushleft}
{\bf \underline{Step 1: Energy estimate for level set functions (Differential Form)}}
\end{flushleft}

For any fixed $\ell \in \R^+$, we consider the part of $h$ above $\ell$, denoted by 
\bals
	h_\ell(t, v) := h(t, v) - \ell, \quad h_\ell^+(t, v) = \max\left(h_\ell(t, v), 0\right).
\eals
Our first goal is to derive an energy estimate on these level set functions, which is contained in the following lemma:
\begin{lemma}\label{lem:enestim_diff}
Suppose $m$, $p$, $M$, $H$, and $h$ are as in Proposition \ref{prop:degiorgi}. Then, there is a constant $C = C(p,m)$ such that for any $0 \leq k < \ell$
\begin{equation}\label{eq:enestim_diff}
\begin{aligned}
    \frac{\dd}{\dd t}\int_{\R^3} (h_\ell^+)^p \dd v &+ c_0\int_{\R^3}\abs{\nabla (h_\ell^+)^{\frac{p}{2}}}^2\langle v\rangle^{-3} \dd v \\
    &\leq  C\left[\frac{1}{(\ell-k)^\gamma} + \frac{1 + \ell}{(\ell-k)^{1+\gamma}} + \frac{1 + \ell + E^{\beta_0}(t) + \ell^2}{(\ell-k)^{2 + \gamma}}\right]\\
    &\qquad\times \left[\left(\norm{\langle \cdot \rangle^{-\frac{3}{2}}\nabla (h_k^+)^{\frac{p}{2}}}_{L^{2}}^{2} +\norm{h_k^+}_{L^p}^p\right) \norm{h_k^+}_{L^p}^{p\left(\frac{q - p -\frac{2}{3}}{p-1}\right)}\norm{h_k^+}_{L^1_m}^{\frac{3}{2m}}\right],
\end{aligned}
\end{equation}
where $c_0 = c_0(M,H)$ is the constant from Lemma \ref{lem:Alower} and
\begin{equation*}
    q = \frac{5}{3}p - \frac{3(p-1)}{m}, \qquad \gamma = q - (p + 1),\qquad \beta_0 = \frac{1}{3(p-1)}, \qquad E(t) = \|h(t)\|_{L^p}^p.
\end{equation*}
\end{lemma}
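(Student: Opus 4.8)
The plan is to test the equation \eqref{eq:landau_h} for $h$ against $(h_\ell^+)^{p-1}$ and estimate the resulting terms by reproducing the interpolation scheme from the proof of Lemma \ref{lem:perthame}, but now localized to the super-level set $\{h > \ell\}$. Since $\nabla h_\ell^+ = \nabla h \cdot \mathbbm{1}_{\{h > \ell\}}$, the divergence-form term $\nabla\cdot(A[f]\nabla h - \nabla a[f] h)$ produces, after integration by parts, the coercive term $\frac{4(p-1)}{p^2}\int A[f]\nabla(h_\ell^+)^{p/2}\cdot\nabla(h_\ell^+)^{p/2}$, which Lemma \ref{lem:Alower} bounds below by $c_0\int|\nabla(h_\ell^+)^{p/2}|^2\langle v\rangle^{-3}$; there will also be a cross-term from the $\nabla a[f]\, h$ piece that one handles via Young's inequality, absorbing a small multiple of the coercive term. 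The remaining contributions are the reaction terms $\int h\, (h_\ell^+)^p$, $\int \mu\,(h_\ell^+)^p$, and $\int (h_\ell^+)^{p-1} A[h]:\nabla^2\mu$, exactly as $I_1,I_2,I_3$ before but with $h$ replaced by $h_\ell^+$ where it is squared/raised to high powers. The key new structural point is that on $\{h > \ell\}$ one has the gain-of-integrability trick: $h_\ell^+ \le h$, and also $h = h_k^+ + k$ on the larger set $\{h>k\}\supset\{h>\ell\}$, so powers of $h_\ell^+$ can be controlled by powers of $h_k^+$ with a gain of a negative power of $(\ell - k)$ coming from $\mathbbm{1}_{\{h>\ell\}} \le \left(\frac{h_k^+}{\ell - k}\right)^{a}$ for any $a \ge 0$; choosing the exponent $a$ appropriately is what produces the $(\ell-k)^{-\gamma}$, $(\ell-k)^{-(1+\gamma)}$, $(\ell-k)^{-(2+\gamma)}$ weights in \eqref{eq:enestim_diff}.

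Concretely, I would first run the $I_1$ estimate verbatim from Lemma \ref{lem:perthame}: apply Lemma \ref{lem:poincare} with $s=3/2$ to $(h_\ell^+)^{p/2}$, then Hölder to split $\left(\int (h_\ell^+)^{3/2}\langle v\rangle^{9/2}\right)^{2/3}$ into $\|h_\ell^+\|_{L^p}$ and $\|h_\ell^+\|_{L^1_{\cdot}}$ pieces. The twist is to carry the weighted $L^1$ exponent as $\frac{3}{2m}$ (matching the statement's $\|h_k^+\|_{L^1_m}^{3/2m}$) rather than the exponent $\frac23\frac{p-3/2}{p-1}$ used before — this corresponds to the modified choice $q = \frac53 p - \frac{3(p-1)}{m}$ and $\gamma = q-(p+1)$, and is achieved by interpolating the $L^{3/2}$-with-weight-$\langle v\rangle^{9/2}$ integral differently (using $L^1_m$ rather than $L^1_{9(p-1)/(2p-3)}$), which is legitimate precisely because $m > \frac92\frac{p-1}{p-3/2}$. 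Then, on $\{h>\ell\}$, one estimates $\|h_\ell^+\|_{L^p}^{p(\cdots)} \le (\ell-k)^{-(\text{gap})}\|h_k^+\|_{L^p}^{p(\cdots)}$ and similarly for the other factors, collecting all the $(\ell-k)$ losses into the three bracketed terms; the $\ell$, $\ell^2$, $E^{\beta_0}(t)$ numerators arise from $h = h_\ell^+ + \ell$ expansions inside the reaction terms $I_2$, $I_3$ and the lower-order part of $I_1$ (the $C_2$ term). For $I_2$ and $I_3$ I would use $\mu\in L^\infty\cap W^{2,p}$, the $\|A[h]\|_{L^\infty}$ bound from Lemma \ref{lem:Alower}, and conservation of mass as before, again trading the indicator for powers of $h_k^+/(\ell-k)$.

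I expect the main obstacle to be the bookkeeping of exponents: one must verify that the three families of lower-order powers of $h_k^+$ produced by the three reaction terms — after the indicator-trick substitution — all collapse to the single common exponent $\frac{q-p-2/3}{p-1}$ on $\|h_k^+\|_{L^p}$ and the single exponent $\frac{3}{2m}$ on $\|h_k^+\|_{L^1_m}$ shown in \eqref{eq:enestim_diff}, while simultaneously the powers of $(\ell-k)^{-1}$ line up as $\gamma$, $1+\gamma$, $2+\gamma$. This is a constrained-linear-algebra computation: each term has degrees of freedom in how one splits a power of $h$ between $L^p$, $L^1_m$, and the indicator, and one must check the system is consistent. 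A secondary technical point is justifying the integration by parts and the differentiation under the integral sign — but since $f$ is Schwartz-class on $[0,T^*)$, $h_\ell^+$ is Lipschitz in $v$ with compact-in-any-fixed-time support decay, so this is routine. Once the differential inequality \eqref{eq:enestim_diff} is in hand, Proposition \ref{prop:degiorgi} will follow by the standard De Giorgi iteration in the level parameter, integrated against the time variable.
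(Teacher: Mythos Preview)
Your overall plan is on the right track, but there is a real ordering issue in how you propose to produce the gradient term $\norm{\langle \cdot\rangle^{-3/2}\nabla(h_k^+)^{p/2}}_{L^2}^2$ on the right-hand side of \eqref{eq:enestim_diff}. You apply the weighted Sobolev inequality (Lemma~\ref{lem:poincare}) to $(h_\ell^+)^{p/2}$ \emph{first}, producing a factor $\norm{\langle\cdot\rangle^{-3/2}\nabla(h_\ell^+)^{p/2}}_{L^2}^2$, and only afterwards convert the remaining $h_\ell^+$ factors to $h_k^+$ via the indicator trick. But the target inequality needs the gradient at level $k$, not level $\ell$, and there is no pointwise comparison in the required direction: on $\{h>\ell\}$ one has $|\nabla(h_\ell^+)^{p/2}|^2=\tfrac{p^2}{4}(h_\ell^+)^{p-2}|\nabla h|^2$ versus $|\nabla(h_k^+)^{p/2}|^2=\tfrac{p^2}{4}(h_k^+)^{p-2}|\nabla h|^2$, and for $3/2<p<2$ the former is \emph{larger} since $h_\ell^+<h_k^+$ and $p-2<0$. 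Nor can you absorb this gradient into the coercive left-hand side, since that would require the accompanying factor $\|h_\ell^+\|_{L^p}^{\cdots}\|h_\ell^+\|_{L^1_m}^{\cdots}$ to be small, which is not known a priori at a fixed step of the De Giorgi iteration.

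The paper resolves this by reversing the order: first reduce the right-hand side to pure Lebesgue integrals $\int(h_\ell^+)^{p+1-i}$ for $i\in\{0,1,2\}$ (this is \eqref{eq:enestim}), then use the level-set inequality \eqref{eq:flfk} to jump to a single higher exponent, $\int(h_\ell^+)^{p+1-i}\le (\ell-k)^{-(i+\gamma)}\int(h_k^+)^{q}$, and \emph{only then} apply the weighted interpolation (Lemma~\ref{lem:poincare} with $s=2$) to $h_k^+$ itself. This naturally yields $\norm{\langle\cdot\rangle^{-3/2}\nabla(h_k^+)^{p/2}}_{L^2}^2$ with no conversion needed; this is precisely the content of Lemma~\ref{lem:interpolate}. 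A secondary point: the drift term $(p-1)\int\nabla h_\ell^+\cdot\nabla a[f]\,h\,(h_\ell^+)^{p-2}$ is better handled not by Young's inequality (which forces an unfavorable weight $\langle v\rangle^{3}$ onto a term you cannot then control) but by integrating by parts via $-\Delta a[f]=f$ together with the substitutions $h=h_\ell^++\ell$ and $f=h_\ell^++\ell+\mu$ on the support; this converts it directly into the reaction terms $\int(h_\ell^+)^{p+1}$, $(1+\ell)\int(h_\ell^+)^p$, and $(\ell+\ell^2)\int(h_\ell^+)^{p-1}$ that feed into \eqref{eq:enestim}.
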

\begin{proof}[Proof of Lemma \ref{lem:enestim_diff}]
We prove Lemma \ref{lem:enestim_diff} in three steps, Step 1-i to Step 1-iii. In Step 1-i we derive an energy estimate for every level set function solving \eqref{eq:landau_h}. In Step 1-ii we combine this energy estimate with a relation between different level set functions. In Step 3-iii we assemble these estimates and conclude the proof of Lemma 4.3.

\begin{flushleft}
\textbf{\underline{
\textbf{Step 1-i: Energy estimate}}}
\end{flushleft}

We test the weak formulation of \eqref{eq:landau_h} with $(h_\ell^+)^{p-1}$. Using that $\partial_t h_\ell^+ = \partial_t h \chi_{\{h \geq \ell\}}$ and $\nabla_v h_\ell^+ = \nabla_v h \chi_{\{h \geq \ell\}}$, we rearrange terms as in the proof of Lemma \ref{lem:perthame} to obtain
\bal\label{eq:enestim_aux}
	\frac{1}{p}\frac{\dd}{\dd t}&\int_{\R^3} (h_\ell^+)^p \dd v + \frac{4(p-1)}{p^2}\int_{\R^3} A[h]\abs{\nabla (h_\ell^+)^\frac{p}{2}}^2 \\
	&\leq (p-1)\int_{\R^3} \nabla h_\ell^+ h \nabla a[f] (h_\ell^+)^{p-2} \dd v + \int_{\R^3} (h_\ell^+)^{p-1}\left(A[h]:\nabla^2 \mu \right)\dd v + \int_{\R^3} h\mu (h_\ell^+)^{p-1}\dd v.
\eal
For the first term on the right hand side, we integrate by parts and continue adding and subtracting to remove the $h$ and $f$, until we have more terms, each involving only $h_\ell^+$:
\begin{align*}
    \int_{\R^3} \nabla h_\ell^+ h \nabla a[f] (h_\ell^+)^{p-2} \dd v &= \int_{\R^3} \nabla h_\ell^+  \nabla a[f] (h_\ell^+)^{p-1} \dd v+ \ell\int_{\R^3} \nabla h_\ell^+  \nabla a[f] (h_\ell^+)^{p-2} \dd v\\
    &= \frac{1}{p}\int_{\R^3} f (h_\ell^+)^{p} \dd v + \frac{\ell}{p-1} \int_{\R^3}  f (h_\ell^+)^{p-1} \dd \\
    &= \frac{1}{p}\int_{\R^3} (h_\ell^+)^{p+1} \dd v + \frac{\ell}{p-1} \int_{\R^3} (h_\ell^+)^{p} \dd v\\
    &\qquad + \frac{1}{p}\int_{\R^3} (\mu + \ell) (h_\ell^+)^{p} \dd v + \frac{\ell}{p-1} \int_{\R^3} (\mu + \ell)(h_\ell^+)^{p-1} \dd v \\
    &\leq C(p)\int_{\R^3} (h_\ell^+)^{p+1} \dd v + C(p)\left(\ell + 1\right)\int_{\R^3} (h_\ell^+)^{p} \dd v + C(p)\left(\ell+\ell^2\right) \int_{\R^3}(h_\ell^+)^{p-1} \dd v.
\end{align*}
Bounding the remaining (linear) terms on the right hand side of \eqref{eq:enestim_aux}, we find
\bals
    \int_{\R^3} (h_\ell^+)^{p-1}\left(A[h]:\nabla^2 \mu \right)\dd v &+ \int_{\R^3} h\mu (h_\ell^+)^{p-1}\dd v \\
    &\leq C\norm{A[h]}_{L^\infty}\int  (h_\ell^+)^{p-1}\dd v + \int (h_\ell^+)^{p} \dd v + \ell \int (h_\ell^+)^{p-1} \dd v\\
    &\leq \big(1 + CE(t)^{\beta_0}\big)\int  (h_\ell^+)^{p-1}\dd v + \int (h_\ell^+)^{p} \dd v + \ell \int (h_\ell^+)^{p-1} \dd v,
\eals
where $E(t) = \|h(t)\|_{L^p}^p$ and $\beta_0 = \frac{1}{3(p - 1)}$ come from the upper bound for $A$ in Lemma \ref{lem:Alower} and the constant $C$ depends only on $p$ as the mass of $f$ is fixed.

Due to the lower bound on $A[f]$ in Lemma \ref{lem:Alower}, we conclude for some universal constant $C$ depending only on $p$,
\begin{equation}\label{eq:enestim}
\begin{aligned}
    \frac{\dd}{\dd t}\int_{\R^3} (h_\ell^+)^p \dd v &+ c_0\int_{\R^3}\abs{\nabla (h_\ell^+)^{\frac{p}{2}}}^2\langle v\rangle^{-3} \dd v\\
	&\leq  C\int_{\R^3} (h_\ell^+)^{p+1}\dd v + C (1+\ell) \int_{\R^3} (h_\ell^+)^{p} \dd v + C\left(1+ E^{\beta_0} + \ell + \ell^2\right) \int_{\R^3}(h_\ell^+)^{p-1} \dd v.
\end{aligned}
\end{equation}
After obtaining an energy estimate like \eqref{eq:enestim}, the standard way of applying the De Giorgi approach to parabolic equations is to first integrate over time and use a Sobolev embedding in the variable $v$ to obtain control of the norms $L^\infty_t L^p_v$ and $L^p_tL^{p^*}_v$, where $p^*$ is the Sobolev conjugate of $p$. Next, one interpolates to gain control of $L^q_{t,v}$ for some $q > p$. In the case of the heat equation, this results in $q = 10/3$, providing improved integrability. The De Giorgi method then repeats this process iteratively, ultimately concluding a quantitative $L^\infty_{t,v}$ bound. However, the presence of the weight $\brak{v}^{-3}$ in \eqref{eq:enestim} prevents the direct application of a Sobolev inequality, but, similar to \cite{ABDL}, a modified De Giorgi procedure can still be used to gain integrability.

\begin{flushleft}
\textbf{\underline{
\textbf{Step 1-ii: Gain of integrability}}}
\end{flushleft}

By passing from a lower level set to a higher lever set, we observe that for any $0 \leq k < \ell$,
\beqs
	0 \leq h_\ell^+ \leq h_k^+,
\eeqs
and if $h_\ell \geq 0$, then $h \geq \ell > k$. In this case, $h_k = h_\ell + \ell - k$ and $\frac{h_k}{\ell-k} = \frac{h_\ell}{\ell-k} + 1 \geq 1$. Therefore,
\beqs
	 \mathbbm{1}_{\{h_\ell \geq 0\}} \leq \frac{h_k^+}{\ell-k}.
\eeqs
In particular for any $\alpha > 0$
\beqs
	 \mathbbm{1}_{\{h_\ell \geq 0\}} \leq \left(\frac{h_k^+}{\ell-k}\right)^\alpha.
\eeqs
We deduce
\beq
	h_\ell^+ \leq (\ell-k)^{-\alpha} (h_k^+)^{1+\alpha} \quad \text{for any }\alpha > 0.
\label{eq:flfk}
\eeq
Inequality (\ref{eq:flfk}) is crucial for the proof of the next lemma, which explains how to estimate the terms on the right hand side of \eqref{eq:enestim}.
\begin{lemma}
\label{lem:interpolate}
Let $p$, $m$, and $h$ be as in Proposition \ref{prop:degiorgi}. Then, for each $0 \le k < \ell$ and for $i \in \set{0, 1, 2}$ there holds
\begin{equation*}
    \int_{\R^3} (h_\ell^+)^{p + 1 - i} \le \frac{C}{(\ell - k)^{i + \gamma}}\left[\left(\norm{\langle \cdot \rangle^{-\frac{3}{2}}\nabla (h_k^+)^{\frac{p}{2}}}_{L^{2}}^{2} +\norm{h_k^+}_{L^p}^p\right) \norm{h_k^+}_{L^p}^{p\left(\frac{2}{3}-\frac{3}{m}\right)}\norm{h_k^+}_{L^1_m}^{{{\frac{3}{m}}}}\right],
\end{equation*}
where
\begin{equation*}
    \gamma = \frac{2}{3}\left( p - \frac{3}{2}\right)-\frac{3}{m}(p-1).
\end{equation*}
\end{lemma}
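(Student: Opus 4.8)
The plan is to reduce the lemma to a single core interpolation estimate for $h_k^+$ and then transfer it to $h_\ell^+$ by means of the level-set inequality \eqref{eq:flfk}. Write $u := h_k^+ \ge 0$ and set $q := p + 1 + \gamma = \tfrac{5}{3}p - \tfrac{3(p-1)}{m}$, where $\gamma = \tfrac23(p - \tfrac32) - \tfrac3m(p-1) > 0$, the positivity of $\gamma$ being exactly the hypothesis $m > \tfrac92\tfrac{p-1}{p-3/2}$. The core claim I would establish is
\[
\int_{\R^3} u^q \dd v \le C\left(\norm{\brak{\cdot}^{-3/2}\nabla u^{p/2}}_{L^2}^2 + \norm{u}_{L^p}^p\right)\norm{u}_{L^p}^{p\left(\frac23 - \frac3m\right)}\norm{u}_{L^1_m}^{\frac3m}.
\]
Granting this, the lemma follows at once: for $i \in \{0,1,2\}$ we have $p+1-i > 0$ since $p > 3/2$, and the elementary argument behind \eqref{eq:flfk} gives, on $\{h_\ell > 0\}$, both $0 \le h_\ell^+ \le h_k^+$ and $\mathbbm{1}_{\{h_\ell > 0\}} \le (h_k^+/(\ell-k))^{i+\gamma}$, whence
\[
\int_{\R^3} (h_\ell^+)^{p+1-i}\dd v \le (\ell-k)^{-(i+\gamma)}\int_{\R^3}(h_k^+)^{p+1-i}(h_k^+)^{i+\gamma}\dd v = (\ell-k)^{-(i+\gamma)}\int_{\R^3}(h_k^+)^q\dd v,
\]
and the core claim bounds the right-hand side, which is precisely the assertion of Lemma \ref{lem:interpolate}.

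It remains to prove the core claim. First I would apply Lemma \ref{lem:poincare} with $g = u^{p/2}$ and $s = 2$ to get
\[
\left(\int_{\R^3} u^{3p}\brak{v}^{-9}\dd v\right)^{1/3} \le C_1\norm{\brak{\cdot}^{-3/2}\nabla u^{p/2}}_{L^2}^2 + C_2\norm{u}_{L^p}^p,
\]
which is legitimate after a routine density argument, since $h$ Schwartz together with $p > 3/2$ ensures $u^{p/2}$ belongs to the weighted space to which Lemma \ref{lem:poincare} applies (the weighted gradient is integrable because $p-2 > -1$). Then I would interpolate $\norm{u}_{L^q}$ (with trivial weight) between the three spaces $L^{3p}_{-9}$, $L^p$ and $L^1_m$ by Hölder's inequality with three exponents, using weights $\theta_1 = p/q$ on $L^{3p}_{-9}$, $\theta_3 = 3/(mq)$ on $L^1_m$ and $\theta_2 = 1-\theta_1-\theta_3$ on $L^p$. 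This choice is the unique one compatible with the interpolation relations: the weight balance forces $0 = -\tfrac3p\theta_1 + m\theta_3$, the Lebesgue relation forces $\tfrac1q = \tfrac{\theta_1}{3p} + \tfrac{\theta_2}{p} + \theta_3$, and together with $\theta_1+\theta_2+\theta_3 = 1$ these reproduce the stated $\theta_i$ and pin down $q = \tfrac53 p - \tfrac{3(p-1)}{m}$. One checks $0 < \theta_i < 1$: $\theta_1 \in (0,1)$ because $q > p$, $\theta_3 \in (0,1)$ trivially, and $\theta_2 > 0$ amounts to $q > p + 3/m$, i.e. $1+\gamma > 3/m$, which holds since $\gamma > 0$ and $m > 9/2$. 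Raising the interpolation inequality to the power $q$ and observing that the $L^{3p}_{-9}$-factor then appears exactly as $\left(\int u^{3p}\brak{v}^{-9}\dd v\right)^{1/3}$, which is controlled by the previous display, yields the core claim.

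I do not expect any essential obstacle here: once the reduction via \eqref{eq:flfk} is in place, the whole statement is a direct application of the weighted Sobolev inequality of Lemma \ref{lem:poincare} followed by a Hölder interpolation. The only point requiring genuine care is the bookkeeping in the three-parameter interpolation and the verification that each $\theta_i$ is strictly between $0$ and $1$ — in particular $\theta_2 > 0$ — which is exactly where the strict inequality $m > \tfrac92\tfrac{p-1}{p-3/2}$ (equivalently $\gamma > 0$) is needed.
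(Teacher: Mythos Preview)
Your proposal is correct and follows essentially the same approach as the paper's proof: both apply the weighted Sobolev inequality of Lemma \ref{lem:poincare} with $g=(h_k^+)^{p/2}$ and $s=2$, interpolate via a three-parameter H\"older inequality between $L^{3p}_{-9}$, $L^p$, and $L^1_m$ with exactly the exponents you identify, and then transfer to $h_\ell^+$ via the level-set inequality \eqref{eq:flfk}. The only cosmetic difference is the order of presentation (the paper first establishes the interpolation estimate for a generic exponent $r$ and then specializes to $r=q$, whereas you fix $q$ from the outset), and your explicit verification that each $\theta_i\in(0,1)$ is slightly more detailed than the paper's.
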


\begin{proof}[Proof of Lemma \ref{lem:interpolate}]
First, let us derive a useful bound on $\|g\|_{L^r}^r$, where $g:\R^3 \rightarrow \R$ is arbitrary. Interpolating, we find
\beqs
    \norm{g}_{L^r} \leq\norm{\langle \cdot \rangle^{-\frac{3}{p}}g}_{L^{3p}}^{\theta_1} \norm{ g}_{L^p}^{\theta_2}\norm{\langle\cdot\rangle^\alpha g}_{L^1}^{\theta_3},
\eeqs
provided $\theta_1,\ \theta_2,\ \theta_3 \in (0,1)$ and the following relations are satisfied: 
\beqs
    \theta_1 + \theta_2 + \theta_3 = 1, \qquad \frac{\theta_1}{3p} + \frac{\theta_2}{p} + \theta_3 = \frac{1}{r}, \qquad \frac{-3}{p}\theta_1 + \alpha \theta_3 = 0.
\eeqs
Imposing further that $r\theta_1 = p$, we find
\bals
    r\theta_2 = \frac{p\left[r - p -\frac{2}{3}\right]}{p-1},\qquad r\theta_3 = \frac{5p-3r}{3(p-1)},\qquad \alpha = \frac{9(p-1)}{5p-3r}, \qquad r \in \left(p+\frac{2}{3}, \frac{5p}{3}\right).
\eals
The weighted Sobolev inequality of Lemma  \ref{lem:poincare} with $s = 2$ then implies for a constant depending only on $p$, $r$ and $\alpha$
\beq \label{eqn:interpolation_estimate}
    \norm{g}^r_{L^r} \leq C\left(\norm{\langle \cdot \rangle^{-\frac{3}{2}}\nabla \big(g^{\frac{p}{2}}\big)}_{L^{2}}^{2} +\norm{g}_{L^p}^p\right) \norm{g}_{L^p}^{p\left(\frac{r - p -\frac{2}{3}}{p-1}\right)}\norm{\langle\cdot\rangle^\alpha g}_{L^1}^{\frac{5p-3r}{3(p-1)}}.
\eeq
Note that the admissible range for $r$, i.e. $(p+ 2/3, 5p/3)$ is non-degenerate if and only if $p > 1$ and, moreover, $p+1$ is within this range if and only if $p > 3/2$.
With the estimate \eqref{eqn:interpolation_estimate} in hand for general $g$ and $r$, let us return to our specific setting.
For simplicity, let us only prove the case when $i = 0$.
Let us fix $q$ by setting $\alpha = m$, or, in other words, define $q$ via the relation
\begin{equation*}
m = \frac{9(p-1)}{5p - 3q}  \qquad \text{or, equivalently,} \qquad q = \frac{5p}{3} - \frac{3(p-1)}{m}.
\end{equation*}
With this definition of $q$, we see that $q$ is in the admissible range $(p+2/3, 5p/3)$ provided $m > 9/2$. However, since we need to bound the $L^{p+1}$ norm, we see that $q > p + 1$ if and only if
\begin{equation*}
m > \frac{9}{2} \frac{p- 1}{p - \frac{3}{2}}.
\end{equation*}
Since by assumption we have enough moments, we set $\gamma = q - (p + 1)$ for $\gamma$ positive.
Finally, we combine \eqref{eq:flfk} with \eqref{eqn:interpolation_estimate} to obtain for $0 \le k < \ell$,
\begin{align*}
\int_{\R^3} (h_\ell^+)^{p+1} \dd v &\leq \frac{1}{(\ell-k)^{\gamma}}\int (h_k^+)^{p+1+\gamma} \dd v\\
    &\leq \frac{1}{(\ell-k)^{\gamma}}\int (h_k^+)^{q} \dd v\\
    &\leq \frac{C}{(\ell-k)^{\gamma}}\left[\left(\norm{\langle \cdot \rangle^{-\frac{3}{2}}\nabla (h_k^+)^{\frac{p}{2}}}_{L^{2}}^{2} +\norm{h_k^+}_{L^p}^p\right) \norm{h_k^+}_{L^p}^{p\left(\frac{q - p -\frac{2}{3}}{p-1}\right)}\norm{h_k^+}_{L^1_m}^{{{\frac{5p-3q}{3(p-1)}}}}\right],\\
\end{align*}
where both $q$ and $\gamma$ have been fixed in terms of $m$ and $p$.
\end{proof}

\begin{flushleft}
\textbf{\underline{Step 1-iii: Conclusion of energy estimate on level sets}}
\end{flushleft}

Lemma \ref{lem:interpolate} bounds the terms on the right hand side of \eqref{eq:enestim} using only the quantities appearing on the left hand side, with the exception of $L^1$-moments, which we know stay bounded by Lemma \ref{lem:moments_h}. This yields the final differential form of our energy estimate:
\begin{equation*}
\begin{aligned}
    \frac{\dd}{\dd t}\int_{\R^3} (h_\ell^+)^p \dd v &+ c_0\int_{\R^3}\abs{\nabla (h_\ell^+)^{\frac{p}{2}}}^2\langle v\rangle^{-3} \dd v \\
    &\leq  C\left[\frac{1}{(\ell-k)^\gamma} + \frac{1 + \ell}{(\ell-k)^{1+\gamma}} + \frac{1 + E^{\beta_0} + \ell + \ell^2}{(\ell-k)^{2 + \gamma}}\right]\\
    &\qquad\times \left[\left(\norm{\langle \cdot \rangle^{-\frac{3}{2}}\nabla (h_k^+)^{\frac{p}{2}}}_{L^{2}}^{2} +\norm{h_k^+}_{L^p}^p\right) \norm{h_k^+}_{L^p}^{p\left(\frac{q - p -\frac{2}{3}}{p-1}\right)}\norm{h_k^+}_{L^1_m}^{\frac{5p-3q}{3(p-1)}}\right],
\end{aligned}
\end{equation*}
and concludes the proof of Lemma \ref{lem:enestim_diff}.
\end{proof}

\begin{flushleft}
    {\bf \underline{Step 2: Energy estimate for level set functions (Integral Form)}}
\end{flushleft}
The energy estimate \eqref{eq:enestim_diff} gives us control between different level sets, however, we will find this more useful in  integral form. This leads to the definition of our energy functional $\mathcal{E}$ for any $\ell \geq 0$ and $0 \le T_1 \le T_2 \le T^*$ as
\beqs
	\mathcal E_\ell(T_1, T_2) := \sup_{t \in [T_1, T_2]} \norm{h_\ell^+(t)}_{L^p_v}^p + c_0\int_{T_1}^{T_2} \norm{\langle \cdot\rangle^{-\frac{3}{2}}\nabla (h_\ell^+)^{\frac{p}{2}}}^2_{L^2_v} \dd t,
\eeqs
where $c_0$ is the constant from Lemma \ref{lem:Alower}.
We will now integrate \eqref{eq:enestim_diff} to obtain an estimate purely in terms of this energy:
\begin{lemma}\label{lem:ElEk}
Let $m$, $p$, $H$, $\mathcal M$ and $h$ be as in Proposition \ref{prop:degiorgi}. Further, let $\beta_0$, $\gamma$, and $q$ be the exponents from Lemma \ref{lem:enestim_diff}. Then, there is a constant $C = C(m, p, H)$ such that for any $0 \leq T_1 \leq T_2 \leq T_3 \leq T^*$ and $0 \leq k \leq \ell$, 
there holds
\begin{equation*}
\begin{aligned}
    \mathcal{E}_\ell(T_2,T_3) &\le C(T_3-T_1)\mathcal M^{ \beta_2}\mathcal{E}_k(T_1,T_3)^{1+\beta_1}\\
    &\qquad\times\left[\frac{1}{(T_2 - T_1)(\ell - k)^{1 + \gamma}} + \frac{1}{(\ell - k)^\gamma} + \frac{1 + \ell}{(\ell - k)^{1 + \gamma}} + \frac{1 + \ell +\ell^2 + \mathcal{E}_0^{\beta_0}(T_1, T_3)}{(\ell - k)^{2+\gamma}} \right],
\end{aligned}
\end{equation*}
where
\begin{equation*}
    \beta_1 = \frac{2}{3}-\frac{3}{m} \qquad \text{and} \qquad \beta_2 = \frac{3}{m}.
\end{equation*}
\end{lemma}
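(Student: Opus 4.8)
The plan is to integrate the differential estimate \eqref{eq:enestim_diff} of Lemma \ref{lem:enestim_diff} in time and to recast it in terms of the energy functional $\mathcal E$, using the level-set inequality \eqref{eq:flfk} to absorb the resulting initial-value term into the forcing term. Throughout, abbreviate $\psi_k(t) := \norm{\langle\cdot\rangle^{-3/2}\nabla(h_k^+)^{p/2}}_{L^2}^2 + \norm{h_k^+(t)}_{L^p}^p$ and write $K_{\ell,k}(t)$ for the bracket of negative powers of $\ell-k$ on the right of \eqref{eq:enestim_diff}, so that, recalling $p\,\frac{q-p-2/3}{p-1}=p\beta_1$ and $\frac{5p-3q}{3(p-1)}=\beta_2$ for the $q$ of Lemma \ref{lem:enestim_diff}, the estimate reads $\frac{\dd}{\dd t}\norm{h_\ell^+}_{L^p}^p + c_0\norm{\langle\cdot\rangle^{-3/2}\nabla(h_\ell^+)^{p/2}}_{L^2}^2 \le C\,K_{\ell,k}(t)\,\psi_k(t)\,\norm{h_k^+(t)}_{L^p}^{p\beta_1}\,\norm{h_k^+(t)}_{L^1_m}^{\beta_2}$. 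First I would record the uniform-in-time bounds that turn this right-hand side into something controlled purely by $\mathcal E_k(T_1,T_3)$: the simple moment bound \eqref{eq:simple_L1} gives $\norm{h_k^+(t)}_{L^1_m}\le\norm{h(t)}_{L^1_m}\le C(1+T_3)$ for $t\le T_3$; the term $E^{\beta_0}(t)=\norm{h(t)}_{L^p}^{p\beta_0}$ inside $K_{\ell,k}(t)$ satisfies $E^{\beta_0}(t)\le C\big(1+\mathcal E_0^{\beta_0}(T_1,T_3)\big)$ after splitting $h=h^+-h^-$ with $0\le h^-\le\mu$ (so $\norm{h}_{L^p}^p\le\norm{h^+}_{L^p}^p+C$ and $\beta_0<1$), which makes $K_{\ell,k}(t)\le C\widetilde K_{\ell,k}$ uniform in $t$, where $\widetilde K_{\ell,k}$ is precisely the bracket in the statement minus its first term; and $\norm{h_k^+(t)}_{L^p}^{p\beta_1}\le\mathcal E_k(T_1,T_3)^{\beta_1}$ together with $\int_{T_1}^{T_3}\psi_k(t)\dd t\le(c_0^{-1}+T_3-T_1)\mathcal E_k(T_1,T_3)\le C(1+T_3)\mathcal E_k(T_1,T_3)$. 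Integrating the displayed inequality from any $s\in[T_1,T_2]$ to any $t\in[T_2,T_3]$ and discarding no negative contribution then yields
\beqs
\norm{h_\ell^+(t)}_{L^p}^p + c_0\int_s^t\norm{\langle\cdot\rangle^{-3/2}\nabla(h_\ell^+)^{p/2}}_{L^2}^2\dd\tau \;\le\; \norm{h_\ell^+(s)}_{L^p}^p + C(1+T_3)^{1+\beta_2}\,\widetilde K_{\ell,k}\,\mathcal E_k(T_1,T_3)^{1+\beta_1}.
\eeqs

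Next I would run the standard De Giorgi averaging. Taking the supremum over $t\in[T_2,T_3]$ (dropping the non-negative integral) gives a bound on $\sup_{t\in[T_2,T_3]}\norm{h_\ell^+(t)}_{L^p}^p$; taking $t=T_3$ and using $\int_s^{T_3}\ge\int_{T_2}^{T_3}$ (since $s\le T_2$) gives a bound on $c_0\int_{T_2}^{T_3}\norm{\langle\cdot\rangle^{-3/2}\nabla(h_\ell^+)^{p/2}}_{L^2}^2$; averaging both over $s\in[T_1,T_2]$ and adding produces
\beqs
\mathcal E_\ell(T_2,T_3) \;\le\; \frac{2}{T_2-T_1}\int_{T_1}^{T_2}\norm{h_\ell^+(s)}_{L^p}^p\dd s \;+\; C(1+T_3)^{1+\beta_2}\,\widetilde K_{\ell,k}\,\mathcal E_k(T_1,T_3)^{1+\beta_1}.
\eeqs
It remains to handle the averaged initial-value term, and this is the one place where some care is needed: the crude bound $\norm{h_\ell^+}_{L^p}\le\norm{h_k^+}_{L^p}$ would be far too weak to make the De Giorgi iteration converge, so instead I would use \eqref{eq:flfk} with $\alpha=(1+\gamma)/p$, for which $p(1+\alpha)=p+1+\gamma=q$, to get $\norm{h_\ell^+(s)}_{L^p}^p\le(\ell-k)^{-(1+\gamma)}\norm{h_k^+(s)}_{L^q}^q$, and then apply the interpolation inequality \eqref{eqn:interpolation_estimate} with $r=q$ to bound $\norm{h_k^+(s)}_{L^q}^q\le C\psi_k(s)\,\norm{h_k^+(s)}_{L^p}^{p\beta_1}\,\norm{h_k^+(s)}_{L^1_m}^{\beta_2}$. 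Integrating over $s\in[T_1,T_2]\subseteq[T_1,T_3]$ and invoking the same uniform bounds as in the first paragraph gives $\int_{T_1}^{T_2}\norm{h_\ell^+(s)}_{L^p}^p\dd s\le C(1+T_3)^{1+\beta_2}\mathcal E_k(T_1,T_3)^{1+\beta_1}(\ell-k)^{-(1+\gamma)}$, so that the averaged term contributes $\frac{C(1+T_3)^{1+\beta_2}\mathcal E_k(T_1,T_3)^{1+\beta_1}}{(T_2-T_1)(\ell-k)^{1+\gamma}}$. Collecting this with the forcing term and recalling the form of $\widetilde K_{\ell,k}$ gives exactly the claimed inequality, the first bracket term $\tfrac{1}{(T_2-T_1)(\ell-k)^{1+\gamma}}$ originating from the averaged initial data and the remaining three from $\widetilde K_{\ell,k}$.

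The argument is essentially bookkeeping once Lemmas \ref{lem:enestim_diff} and \ref{lem:interpolate} are available; the only genuinely delicate points are (i) recognizing that the initial-value term must be raised to the exponent $q$ via \eqref{eq:flfk} — trading one level-set gap $(\ell-k)^{-(1+\gamma)}$ for the extra integrability needed to reuse the interpolation inequality — rather than merely dominated by $h_k^+$ in $L^p$, and (ii) checking that all the polynomial-in-time factors (the moment growth, the crude bound $c_0^{-1}+T_3-T_1$ on $\int\psi_k$ normalized by $\mathcal E_k$, and the $A[h]$ bound) telescope into the single power $(1+T_3)^{1+\beta_2}$ with $\beta_2=3/m$. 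I do not expect any obstacle beyond these, since the substantive analytic input — the weighted Sobolev inequality (Lemma \ref{lem:poincare}), the upper and lower bounds on $A$ (Lemma \ref{lem:Alower}), and the $L^1$-moment propagation (Lemma \ref{lem:moments_h}, or its crude consequence \eqref{eq:simple_L1}) — is already packaged into the statements cited above.
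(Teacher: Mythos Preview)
Your proposal is correct and follows essentially the same route as the paper: integrate \eqref{eq:enestim_diff} from $s\in[T_1,T_2]$ to $t\in[T_2,T_3]$, take the supremum in $t$ and average in $s$, then handle the averaged initial term via \eqref{eq:flfk} (equivalently, Lemma \ref{lem:interpolate} with $i=1$) to produce the $(T_2-T_1)^{-1}(\ell-k)^{-(1+\gamma)}$ contribution. Your treatment of $E^{\beta_0}(t)$ via the decomposition $h=h^+-h^-$ with $h^-\le\mu$ is in fact slightly more careful than the paper's, which tacitly bounds $\norm{h}_{L^p}^{p\beta_0}$ by $\mathcal E_0^{\beta_0}$ without comment; the extra additive constant you pick up is harmless, being absorbed into the existing $1$ in the last bracket term.
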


\begin{proof}
Fix $0 \leq T_1 < T_2 \leq T_3 \leq T^*$. 
We integrate \eqref{eq:enestim_diff} over $(t_1, t_2)$ and denoting the right hand side of \eqref{eq:enestim_diff} by $RHS(t)$, we obtain
\begin{equation*}
\int_{\R^3} (h_\ell^+)^p(t_2) \dd v + c_0 \int_{t_1}^{t_2}\int_{\R^3} \abs{\nabla (h_\ell^+)^{\frac{p}{2}}}^2\brak{v}^{-3} \dd v \dd t \le \int_{\R^3} (h_\ell^+)^p(t_1) \dd v + \int_{t_1}^{t_2} RHS(t) \dd t.
\end{equation*}
Since $RHS(t)$ is positive, this implies for $T_1 \leq t_1 \leq T_2 \leq t_2 \leq T_3$,
\bals
    \int_{\R^3} (h_\ell^+)^p(t_2) \dd v+ c_0 \int_{T_2}^{t_2} \int_{\R^3} \abs{\nabla (h_\ell^+)^{\frac{p}{2}}}^2 \langle v\rangle^{-3} \dd v\dd t \le \int_{\R^3} (h_\ell^+)^p(t_1) \dd v + \int_{T_1}^{t_2} RHS(t) \dd t.
\eals
Next, taking the the supremum over $t_2 \in [T_2, T_3]$ and then averaging over $t_1 \in [T_1, T_2]$, we find
\bals
    \sup_{t\in [T_2, T_3]} \int_{\R^3} (h_\ell^+)^p(t) \dd v &+ c_0 \int_{T_2}^{T_3} \int_{\R^3} \abs{\nabla (h_\ell^+)^{\frac{p}{2}}}^2 \langle v\rangle^{-3} \dd v\dd t\\
    &\le \frac{1}{T_2 - T_1}\int_{T_1}^{T_2}\int_{\R^3} (h_\ell^+)^p(t) \dd v \dd t + \int_{T_1}^{T_3} RHS(t) \dd t.
\eals
The left hand side is $\mathcal{E}_\ell(T_2,T_3)$. It remains to relate the right hand side back to the energy functional $\mathcal{E}_k(T_1,T_3)$. Indeed, we have the obvious estimate
\bals
\int_{T_1}^{T_3}\Bigg[\left(\norm{\langle \cdot \rangle^{-\frac{3}{2}}\nabla (h_k^+)^{\frac{p}{2}}}_{L^{2}}^{2} +\norm{h_k^+}_{L^p}^p\right) &\norm{h_k^+}_{L^p}^{p\left( \frac{2}{3}-\frac{3}{m}\right)}\norm{h_k^+}_{L^1_m}^{{\frac{3}{m}}}\Bigg] \dd t  \\
&\leq\mathcal{M}^{\beta_2}\left(c_0^{-1} + (T_3 - T_1)\right)\mathcal{E}_k(T_1,T_3)^{1 + \beta_1},
\eals
where
\beqs
\beta_1 =  \frac{2}{3}-\frac{3}{m}, \qquad \beta_2 = \frac{3}{m}, \qquad \text{and} \qquad \mathcal{M} = \sup_{T_1 < t < T_3} \|h(t)\|_{L^1_m}.
\eeqs
Also, recalling the definition of $E(t)$, we see
\begin{equation*}
\sup_{T_1 < t < T_3} E(t)^{\beta_0} = \sup_{T_1 < t < T_3} \|h(t)\|_{L^p}^{p\beta_0} \le \mathcal{E}_0(T_1, T_3)^{\beta_0}.
\end{equation*}
Therefore, we see 
\bals
\int_{T_1}^{T_3} &RHS(t) \dd t \\
&\le C\mathcal{M}^{\beta_2}\left(c_0^{-1} + (T_3 - T_1)\right)\mathcal{E}_k(T_1,T_3)^{1 + \beta_1}\left[\frac{1}{(\ell - k)^\gamma} + \frac{1 + \ell}{(\ell - k)^{1 + \gamma}} + \frac{1 + \ell + \ell^2 + \mathcal{E}_0(T_1, T_3)^{\beta_0}}{(\ell - k)^{2+\gamma}} \right],
\eals
where the constant $C = C(p,m)$ is still independent of $h$, the level sets $l$ and $k$, and the times $T_1$, $T_2$, and $T_3$. Finally, using Lemma \ref{lem:interpolate} once more, we obtain
\begin{equation*}
\frac{1}{T_2 - T_1} \int_{T_1}^{T_2}\int_{\R^3} (h_\ell^+)^p \dd v \dd t \le \frac{C\mathcal{M}^{\beta_2}\left(c_0^{-1} + (T_3 - T_1)\right)\mathcal{E}_k(T_1,T_3)^{1 + \beta_1}}{(T_2 - T_1)(\ell-k)^{1 +\gamma}}.
\end{equation*}
By Lemma \ref{lem:moments_h}, $\mathcal{M}$ grows at most linearly in $T_3$, and thus we conclude
\begin{equation*}
\begin{aligned}
    \mathcal{E}_\ell(T_2,T_3) &\le C(H,p,m)
    (T_3-T_1)\mathcal M^{\beta_2}
    \mathcal{E}_k(T_1,T_3)^{1+\beta_1}\\
        &\qquad\times\left[\frac{1}{(T_2 - T_1)(\ell - k)^{1 + \gamma}} + \frac{1}{(\ell - k)^\gamma} + \frac{1 + \ell}{(\ell - k)^{1 + \gamma}} + \frac{1 +\ell + \ell^2 + \mathcal{E}_0(T_1, T_3)^{\beta_0}}{(\ell - k)^{2+\gamma}} \right],
\end{aligned}
\end{equation*}
as desired.
This concludes the proof.
\end{proof}

\begin{flushleft}
    {\bf \underline{Step 3: De Giorgi iteration}}
\end{flushleft}

\begin{proof}[Proof of Proposition \ref{prop:degiorgi}]
We now finish the proof of Proposition \ref{prop:degiorgi} using an iteration procedure. Let $0 < t < T^*$ be the fixed times in the statement of Proposition \ref{prop:degiorgi} and take $t < T \leq \min(T^*, 1)$. We consider for $n \in \N$
\beqs	
	\ell_n = K\left(1 - 2^{-n}\right), \quad t_n = t\left(1 - 2^{-n}\right), \quad \text{and} \quad E_n = \mathcal{E}_{\ell_n}(t_n, T),
\eeqs
where $K > 0$ is a parameter to be chosen appropriately. Indeed, our main goal is to find a value of $K$ for which $\lim_{n\rightarrow \infty} E_n = 0$. By the definition of the energy and the choice of iteration parameters, this would imply $h(\tau,v) \le K$ for almost every $t < \tau < T$ and $v\in\R^3$.

To this end, we apply Lemma \ref{lem:ElEk} with $T_1 = t_n$, $T_2 = t_{n+1}$, and $T_3 = T$ and deduce the following recurrence relation for $E_n$
\bal
	E_{n+1} &\leq C (T-t_n)\mathcal M^{\beta_2}E_n^{1 + \beta_1}\Bigg[ \frac{1}{(t_{n+1}-t_n)(\ell_{n+1}-\ell_n)^{\gamma+1}} + \frac{1}{(\ell_{n+1} - \ell_n)^\gamma} \\
    &\qquad\qquad\qquad\qquad\qquad+ \frac{1+\ell_{n+1}}{(\ell_{n+1}-\ell_n)^{\gamma+1}} +\frac{1 + E_0^{\beta_0} +\ell_{n+1} + \ell_{n+1}^2}{(\ell_{n+1}-\ell_n)^{\gamma+2}}\Bigg]\\
    &\leq C
     \mathcal M^{\beta_2}E_n^{1 + \beta_1}\Bigg[\frac{2^{(\gamma + 1)(n+1)}}{t K^{\gamma+1}} + \frac{2^{(\gamma + 1)(n+1)}+ 2^{(\gamma + 2)(n+1)}(1 - 2^{-(n+1)})}{K^{\gamma+1}} \\
    &\qquad\qquad\qquad\qquad\qquad+ \frac{2^{(\gamma+2)(n+1)}(1 + E_0^{\beta_0})}{K^{\gamma+2}}\\
    &\qquad\qquad\qquad\qquad\qquad + \frac{2^{\gamma(n+1)} + 2^{(\gamma + 1)(n+1)}\left(1 - 2^{-(n+1)}\right) + 2^{(\gamma + 2)(n+1)}\left(1 - 2^{-(n+1)}\right)^2}{K^\gamma}\Bigg]\\
	&\le C\mathcal M^{\beta_2}E_n^{1 + \beta_1}\left[\frac{2^{n(\gamma+1)}}{t K^{\gamma+1}} +\frac{2^{n(\gamma+2)}\left(1 + E_0^{\beta_0}\right)}{K^{\gamma+2}} + \frac{2^{n(\gamma+2)}}{K^{\gamma+1}} + \frac{2^{n(\gamma+2)}}{K^{\gamma}}\right].
\label{eq:recurrence}
\eal
Now, because the power $\beta_1$ is positive, $E_n$ should decay, even though the coefficients in the recurrence are geometric (in $n$). To prove this, we use a trick from \cite{ABDL} to find a quantitative estimate for $K$ despite differing homogeneity in $K$. In essence, we expect $E_n$ to decay exponentially, so we search for a choice of parameters $K$ and $Q > 0$ such that the sequence $E_n^*$ defined as
\beqs
    E_n^* := E_0 Q^{-n}, \quad n \in \N
\eeqs
satisfies \eqref{eq:recurrence} with the reversed inequality, and so by induction will remain larger than $E_n$. Therefore, we want
\bal
    E_{n+1}^* \geq C \mathcal M^{\beta_2}(E_n^*)^{1+\beta_1}\left[\frac{2^{n(\gamma+1)}}{tK^{\gamma+1}}+\frac{2^{n(\gamma+2)}\left(1 + E_0^{\beta_0}\right)}{K^{\gamma+2}}+\frac{2^{n(\gamma+2)}}{K^{\gamma+1}}+ \frac{2^{n(\gamma+2)}}{K^{\gamma}}\right].
\label{eq:reverse}
\eal
Using the definition of $E_n^*$, \eqref{eq:reverse} holds if
\bals
    1 \geq C \mathcal M^{\beta_2}E_0^{\beta_1}Q \left(2^{\gamma+2} Q^{-\beta_1}\right)^n\left[\frac{1}{tK^{\gamma+1}} +\frac{1 + E_0^{\beta_0}}{K^{\gamma+2}}+\frac{1}{K^{\gamma+1}}+ \frac{1}{K^\gamma}\right].
\eals
We now choose $Q$ such that 
\beqs
    2^{\gamma+2}Q^{-\beta_1} \leq 1 \qquad \text{or equivalently,}\qquad Q \geq 2^{\frac{(\gamma+2)}{\beta_1}}.
\eeqs
Then, for \eqref{eq:reverse} to hold we need
\bals
    1 \geq C \mathcal M^{\beta_2} E_0^{\beta_1}Q \left[\frac{1}{tK^{\gamma+1}} +\frac{1 + E_0^{\beta_0}}{K^{\gamma+2}}+\frac{1}{K^{\gamma+1}} + \frac{1}{K^\gamma}\right].
\eals
Next, choosing $K$ so each of the terms is smaller than a quarter, the recurrence \eqref{eq:reverse} holds. More precisely, we choose $K$ as
\begin{equation} \label{eq:max_K}
\begin{aligned}
    K =  C \max \left\{E_0^{\frac{\beta_1}{\gamma}} \mathcal M^{\frac{\beta_2}{\gamma}}; E_0^{\frac{\beta_1}{1 + \gamma}} \mathcal M^{\frac{\beta_2}{1+\gamma}}; E_0^{\frac{\beta_1}{2 + \gamma}} \mathcal M^{\frac{\beta_2}{2+\gamma}}; E_0^{\frac{\beta_0 + \beta_1}{2 + \gamma}} \mathcal M^{\frac{\beta_2}{2+\gamma}} ; E_0^{\frac{\beta_1}{1 + \gamma}}t^{-\frac{1}{1 + \gamma}} \mathcal M^{\frac{\beta_2}{1+\gamma}}\right\}.
\end{aligned}
\end{equation}
Then, as discussed above, since $E_0 = E_0^*$ it follows by induction that $E_n \leq E_n^*$ for $n\in \N$. Moreover, since $Q > 1$ we deduce
\beqs
    \lim_{n \to \infty} E_n \le \lim_{n \to \infty} E_n^* = 0.  
\eeqs
Therefore, the definition of our energy functional $E_n$ implies 
\beqs
	\sup_{t < \tau < T} \norm{h_K^+(\tau)}_{L^p_v}^p = 0, \qquad \text{or equivalently,} \qquad \|h\|_{L^\infty((t,T)\times \R^3)} \le K.
\eeqs
Finally, from the choice of $K$, we have for all $v\in \mathbb{R}^3$ and $\tau \in(t,T)$
\begin{equation*}
h(\tau,v) \le K \le C\left(E_0^{\frac{\beta_1}{\gamma}} \mathcal M^{\frac{\beta_2}{\gamma}} + E_0^{\frac{\beta_1}{1 + \gamma}}\mathcal M^{\frac{\beta_2}{1+\gamma}} + E_0^{\frac{\beta_1}{2 + \gamma}}  \mathcal M^{\frac{\beta_2}{2+\gamma}}+ E_0^{\frac{\beta_0 + \beta_1}{2 + \gamma}} \mathcal M^{\frac{\beta_2}{2+\gamma}} + E_0^{\frac{\beta_1}{1 + \gamma}}t^{-\frac{1}{1 + \gamma}} \mathcal M^{\frac{\beta_2}{1+\gamma}}\right).
\end{equation*}
Taking $T = 2t$ yields the desired estimate. Finally, recalling the definitions of the exponents $\beta_0$, $\beta_1$, $\beta_2$, and $\gamma$, and performing some basic algebraic manipulations gives the simplified expressions found in \eqref{eq:exponents} and completes the proof of Proposition \ref{prop:degiorgi}.
\end{proof}

\section{Global Existence for Smooth Initial Data}\label{sec:smooth_data}

In this section, we prove Proposition \ref{prop:smooth_data} by demonstrating global-in-time existence of solutions for initial data that are very smooth and rapidly decaying. Let us define the class of initial data considered in the proposition as:
\begin{equation*}
\begin{aligned}
\mathcal{A}(p,m,H,M,\delta) &= \Bigg\{h_{in} \in\S(\R^3) \mid f_{in} = h_{in} + \mu \text{ satisfies } \eqref{eq:normalization},\ 0 \le f_{in},\\
    &\qquad\qquad\|f_{in}\|_{L^1_m} \le M, \ \int_{\R^3} f_{in}|\log(f_{in})| \dd v \le H, \ \|h_{in}\|_{L^p}^p \le \delta\Bigg\}. 
\end{aligned}
\end{equation*}
We now fix $p$, $m$, $H$, $M$ such that $p > 3/2$ and
$$m > \frac{9}{2}\frac{p - 1}{p - \frac{3}{2}}\max\left(1, \frac{p\left(p-\frac{3}{2}\right)}{p^2 - 2p + 3/2}\right),$$ and attempt to find a corresponding $\delta$ so that for each $h_{in} \in \mathcal{A}(\delta)$, there exists a unique global-in-time Schwartz class solution to \eqref{eq:landau} with initial datum $f_{in} = h_{in} + \mu$.

By Theorem \ref{thm:henderson_snelson_tarfulea}, for any $h_{in} \in \S(\R^3)$, there is a time $T^*(h_{in}) > 0$ and function $h:[0,T^*)\times \R^3 \rightarrow \R$ such that $f(t) = h(t) + \mu$ is the unique Schwartz class solution to \eqref{eq:landau} on $[0,T^*)\times \R^3$ with initial datum $f_{in} = h_{in} + \mu$ with maximal time of existence $T^*$. Moreover, $T^*$ is characterized via the blow up criterion
\begin{equation}\label{eq:blowup_criterion}
\lim_{t\nearrow T^*} \|f(t)\|_{L^\infty} = \infty.
\end{equation}
Our first goal is to show that $T^*$ is uniformly large for all $h_{in} \in \mathcal{A}(\delta)$. By Lemma \ref{lem:perthame}, for any $\eps > 0$ sufficiently small, there are $T_0(\eps) \le 1$ and $\delta_0(\eps) > 0$ such that for any $h_{in} \in \mathcal{A}(\delta_0)$, the following estimate holds:
\begin{equation*}
\sup_{0 < t < \min(T_0,T^*)} \|h\|_{L^p}^p + c_0(H)\int_0^{\min(T_0,T^*)}\int_{\R^3}\brak{v}^{-3}\abs{\nabla h^{p/2}}^2 \dd v\dd t \le \eps.
\end{equation*}
By Proposition \ref{prop:degiorgi} and Lemma \ref{lem:moments_h}, we find for $0 < t < \min(T_0,T^*)$ and for $\eps$ sufficiently small
\begin{equation}\label{eqn:Linfty_bound}
\|h(t)\|_{L^\infty} \le C(p,m,H,\mathcal M)\left(\eps^{\min\left(\frac{\beta_1}{\gamma}, \frac{\beta_1}{1+\gamma}, \frac{\beta_1}{2 + \gamma}, \frac{\beta_0 + \beta_1}{2 + \gamma}\right)} + \left(\eps^{\beta_1}t^{-1}\right)^{\frac{1}{1+\gamma}}\right) < \infty,
\end{equation}
where the exponents $\beta_0$, $\beta_1$, and $\gamma$ are defined in Proposition \ref{prop:degiorgi}.
Using the characterization of $T^*$ in \eqref{eq:blowup_criterion}, we must have $T^* > T_0$ for every $h_{in} \in \mathcal{A}(\delta_0(\eps))$. Moreover, Lemma \ref{lem:perthame} guarantees $T_0(\eps) \ge C\eps^{1-\alpha}$ for $\alpha = 1 - \frac{1}{p} + \frac{1}{3(p-1)}$, so that \eqref{eqn:Linfty_bound} implies
\begin{equation}\label{eqn:uniform_Linfty}
\lim_{\eps \to 0^+} \sup_{h_{in} \in \mathcal{A}(\delta_0(\eps))} \left[\sup_{T_0(\eps)/4 < t < T_0(\eps)} \|h(t)\|_{L^\infty} + \int_0^{T_0(\eps)}\int_{\R^3}\brak{v}^{-3} \abs{\nabla h^{p/2}}^2 \dd v \dd t\right] = 0,
\end{equation}
provided $\alpha + \beta_1 > 1$. We note that simplifying the expressions for $\alpha$ and $\beta_1$,
\begin{equation*}
\alpha + \beta_1 - 1 = 0 \qquad\text{if and only if}\qquad m = \frac{9}{2}\frac{p - 1}{p - \frac{3}{2}}\left(\frac{p\left(p-\frac{3}{2}\right)}{p^2 - 2p + 3/2}\right). 
\end{equation*}
Therefore, our constraint on $m$ exactly guarantees that $\alpha + \beta_1 - 1 > 0$ and so \eqref{eqn:uniform_Linfty} holds.

We would like to apply the global-in-time existence result Theorem \ref{thm:desvillettes_he_jiang} to conclude that there is a $\delta$ sufficiently small such that for each $h_{in} \in \mathcal{A}(\delta)$, we have $T^*(h_{in}) = \infty$. Consequently, $f_{in}$ admits a global-in-time Schwartz class solution if $f_{in}$ is suitably close to the Maxwellian $\mu$ in $L^p$. However, to apply Theorem \ref{thm:desvillettes_he_jiang} to the profile $h(T_0/2)$, which is well-defined for $h_{in} \in \mathcal{A}(\delta_0(\eps))$, we must show the condition:
\begin{equation}\label{eq:existence_condition}
\lim_{\eps \to 0^+} \sup_{h_{in} \in \mathcal{A}(\delta_0(\eps))} \left(\|h(T_0/2)\|_{L^2_1} + \|\nabla h(T_0/2)\|_{L^2_2}\right) < \eps_1,
\end{equation}
where $\eps_1$ is the fixed smallness parameter in Theorem \ref{thm:desvillettes_he_jiang}. Supposing for the moment that \eqref{eq:existence_condition} holds, we will show how to deduce Proposition \ref{prop:smooth_data}. 

Use \eqref{eq:existence_condition} to pick $\eps_0$ sufficiently small so that 
\begin{equation*}
\sup_{h_{in} \in \mathcal{A}(\delta_0(\eps_0))} \left(\|h(T_0/2)\|_{L^2_1} + \|\nabla h(T_0/2)\|_{L^2_2}\right) < \eps_1.
\end{equation*}
Then, for each $h_{in} \in \mathcal{A}(\delta_0(\eps_0))$, it follows that one may apply Theorem \ref{thm:desvillettes_he_jiang} to $h(T_0/2)$ and obtain a solution $\tilde f = \tilde h + \mu$ to \eqref{eq:landau} on $[T_0/2, \infty)$, where $\tilde f(T_0/2) = f(T_0/2)$, which further satisfies $\sup_{t > T_0/2}\|\tilde f\|_{L^2} < \infty$. Note that restricting $f$ to $[T_0/2, T_0]$ yields another solution to \eqref{eq:landau} with the same profile at time $T_0/2$. Since both $f$ and $\tilde f$ lie within $L^1(T_0/2, T_0; L^\infty)$ by Proposition \ref{prop:degiorgi}, applying Theorem \ref{thm:Fournier}, we conclude $f$ and $\tilde f$ agree on $[T_0/2, T_0]$. Therefore,
\begin{equation*}
f(t) = \begin{cases} f(t) &\text{if }t \le T_0/2\\
    \tilde f(t) &\text{if }t \ge T_0/2,\end{cases}
\end{equation*}
defines a global-in-time solution to \eqref{eq:landau} with initial datum $f_{in}$.
Finally, since $\tilde f$ is constructed by Theorem \ref{thm:desvillettes_he_jiang}, we obtain the long-time bound
\begin{equation*}
\sup_{T_0/2 < t < \infty} \|h(t)\|_{H^1} \le C(p, m, H, M).
\end{equation*}
Applying Proposition \ref{prop:degiorgi} once more yields that $\|h(t)\|_{L^\infty}$ remains finite, and by \eqref{eq:blowup_criterion} we conclude $T^* = \infty$. The constructed solution $f$ remains Schwartz class, which concludes the proof of Proposition \ref{prop:smooth_data}.

\noindent{\underline {\bf Proof of Claim \ref{eq:existence_condition}}}

We now show condition \eqref{eq:existence_condition}.
The first term in \eqref{eq:existence_condition} is easily controlled by using the moment bound from Lemma \ref{lem:moments_h} and \eqref{eqn:uniform_Linfty}. Indeed, for any $h_{in} \in \mathcal{A}(\delta_0(\eps))$
\begin{equation*}
\int_{\R^3} |h(T_0/2)|^2 \brak{v}^2 \dd v \le \left(\sup_{T_0/4 < t < T_0} \|h(t)\|_{L^\infty}\right)\left(\sup_{0<t<T_0} \|h(t)\|_{L^1_2}\right),
\end{equation*}
where the right hand side decays as $\eps \rightarrow 0^+$.

The second term in \eqref{eq:existence_condition} requires more care. We know by \eqref{eqn:uniform_Linfty} the $L^\infty$ norm of $h(T_0/2)$ is uniformly small for $h_{in}\in \mathcal{A}(\delta_0(\eps))$. We need to show that this implies the gradient $\nabla h(T_0/2)$ is also small (uniformly in $h_{in}$). Note that because $T_0(\eps) \sim \eps^{1-\alpha}$ for $\alpha \in (0,1)$, the $H^1$ estimates in the forthcoming Lemma \ref{lem:higher_regularity} imply for $m \ge 8$,
\begin{equation*}
\text{For }h_{in} \in \mathcal{A}(\delta_0(\eps)), \qquad \|\nabla h(T_0/2)\|_{L^2_2} \le C(H,M) \eps^{\alpha},
\end{equation*}
which concludes the proof of \eqref{eq:existence_condition}. Now, let us prove the necessary higher regularity estimates:
\vspace{-.25cm}
\begin{lemma}[Uniform $H^1$ Regularity]\label{lem:higher_regularity}
Let $f$ be a smooth, rapidly decaying solution to \eqref{eq:landau} on $[0,T] \times \R^3$ for some $0 < T < 1$, satisfying the normalization \eqref{eq:normalization} and with entropy $H$. Suppose further that $h = f - \mu$ satisfies $\|h\|_{L^\infty([0,T]\times \R^3)} < \eps \le 1$. Then, for any $k \ge 0$, we have the following weighted $H^1$ estimates: For any $0 < t < T$,
\begin{equation*}
    \sup_{t < s < T} \int_{\R^3} \brak{v}^k \abs{\nabla h(s)}^2 \dd v + c_0\int_{t}^T \brak{v}^{k-3}\abs{\nabla^2 h(s)}^2 \dd v\dd s \le C\left(k,H,\|h_{in}\|_{L^1_{k+6}}\right)\eps \left(1 + \frac{1}{t}\right).
\end{equation*}
\end{lemma}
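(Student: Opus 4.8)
The plan is to prove Lemma \ref{lem:higher_regularity} by a two–step weighted parabolic energy estimate (first on $h$, then on $\nabla h$), followed by a time–averaging argument that produces the $(1+1/t)$ factor with no dependence on higher Sobolev norms of $h_{in}$. Since $f$ is smooth and rapidly decaying, every integration by parts below is rigorous. All coefficient bounds will have constants depending only on $H$, on $\eps\le 1$, and on $\|h_{in}\|_{L^1_{k+6}}$, the latter through the moment propagation of Lemma \ref{lem:moments_h} on $[0,T]\subset[0,1]$, which gives $\sup_{[0,T]}\|f(s)\|_{L^1_{j}}\le C$ for $j\le k+6$, hence also $\|\brak{v}^{j/2}f(s)\|_{L^2}^2\le\|f(s)\|_{L^\infty}\|f(s)\|_{L^1_j}\le C$. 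The coefficient estimates needed are: the lower bound $A[f]\ge c_0\brak{v}^{-3}$ and upper bound $\|A[f]\|_{L^\infty}\le C$ from Lemma \ref{lem:Alower} (using $\|f\|_{L^1}=1$, $\|f\|_{L^\infty}\le\|\mu\|_{L^\infty}+1$); the decay bounds $\|\brak{v}A[\mu]\|_{L^\infty}+\|\brak{v}^2\nabla A[\mu]\|_{L^\infty}\le C$, which hold because $A[\mu]$, resp. $\nabla A[\mu]$, is the Gaussian $\mu$ convolved with a kernel homogeneous of degree $-1$, resp. $-2$; the smallness $\|A[h]\|_{L^\infty}+\|\nabla A[h]\|_{L^\infty}\le C\eps^{\sigma}$ for some $\sigma=\sigma(k)\in(0,1)$, obtained by splitting the same convolution kernels over $B_1$ and $B_1^c$, using $\|h\|_{L^\infty}\le\eps$ on $B_1$ and $\|h\|_{L^1}\le C\eps^{\sigma}$ on $B_1^c$ (the latter from $\|h\|_{L^\infty}\le\eps$ plus the bounded moment $\|h\|_{L^1_{k+6}}\le C$); and the weighted Calderón–Zygmund bound $\|\brak{v}^{\beta}\nabla^2a[f]\|_{L^2}\le C\|\brak{v}^{\beta}f\|_{L^2}$ (valid since $\brak{v}^{2\beta}$ is an $A_2$ weight), with $\Delta a[f]=-f$.

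\textbf{Step 1 (weighted $L^2$ bound).} Fix $0\le N\le k+6$, test $\partial_t h=\nabla\cdot(A[f]\nabla h-\nabla a[f]h)+A[h]:\nabla^2\mu+h\mu$ with $\brak{v}^N h$ and integrate by parts. The principal term gives $\int A[f]\brak{v}^N|\nabla h|^2\ge c_0\int\brak{v}^{N-3}|\nabla h|^2$; the weight–derivative errors are absorbed via $\|A[f]\|_{L^\infty}\le C$ and Young; the drift term is rewritten with $\Delta a[f]=-f$ and controlled using $f\ge0$ and $\|h\|_{L^\infty}\le\eps$; the reaction terms are bounded by $C\eps\big(\|\brak{v}^{N/2}h\|_{L^2}^2+\eps\big)$ using $\|h\|_{L^\infty}\le\eps$, $\|A[h]\|_{L^\infty}\le C\eps^{\sigma}$, and the fast decay of $\mu$. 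Since $\|\brak{v}^{N/2}h_{in}\|_{L^2}^2\le\|h_{in}\|_{L^\infty}\|h_{in}\|_{L^1_N}\le C\eps$, Grönwall on $[0,T]\subset[0,1]$ yields
\begin{equation*}
\sup_{0<s<T}\|\brak{v}^{N/2}h(s)\|_{L^2}^2+c_0\int_0^T\!\!\int_{\R^3}\brak{v}^{N-3}|\nabla h|^2\dd v\dd s\le C\big(N,H,\|h_{in}\|_{L^1_N}\big)\,\eps ,
\end{equation*}
so in particular $\int_0^T\!\int_{\R^3}\brak{v}^{j}|\nabla h|^2\dd v\dd s\le C\eps$ for every $0\le j\le k+3$.

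\textbf{Steps 2--3 (weighted $H^1$ bound and conclusion).} Differentiating in $v_i$ and using bilinearity of $Q$, one gets $\partial_t\partial_i h=Q(f,\partial_i h)+Q(\partial_i f,h)+A[\partial_i h]:\nabla^2\mu+\partial_i h\,\mu+A[h]:\nabla^2\partial_i\mu+h\partial_i\mu$. Testing with $\brak{v}^k\partial_i h$ and summing over $i$, with $E_k(s):=\int\brak{v}^k|\nabla h(s)|^2$ and $D_k(s):=\int\brak{v}^{k-3}|\nabla^2 h(s)|^2$, the $A[f]\nabla\partial_i h$ piece of $Q(f,\partial_i h)$ gives the dissipation $-c_0 D_k(s)$ (via $A[f]\ge c_0\brak{v}^{-3}$), while every other term is bounded after Cauchy–Schwarz and Young by $\tfrac{c_0}{4}D_k(s)+C E_k(s)+C\eps+C\eps^{\sigma}\big(E_{k+1}(s)+E_{k+3}(s)\big)$: the $A[\mu],\nabla A[\mu],\nabla a[\mu]$ contributions lose no weight thanks to their $\brak{v}^{-1},\brak{v}^{-2}$ decay; the $A[h],\nabla A[h]$ contributions lose at most three powers of $\brak{v}$ but carry a factor $\eps^{\sigma}$; the $\nabla^2 a[f]$ terms are handled by the weighted $L^2$ bound of Step 1 and the bounded moments of $f$; and the $\mu$–reaction terms are $O(\eps)$ by the Gaussian decay of $\mu$. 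Thus $\tfrac{d}{ds}E_k+c_0 D_k\le C E_k+R(s)$ with $R(s):=C\eps+C\eps^{\sigma}\big(E_{k+1}+E_{k+3}\big)(s)\ge0$, and by Step 1 both $\int_0^T E_k\dd s\le C\eps$ and $\int_0^T R\dd s\le C\eps+C\eps^{1+\sigma}\le C\eps$. Integrating $\tfrac{d}{ds}E_k\le CE_k+R$ from $r$ to $s$ and using these two facts gives $E_k(s)\le E_k(r)+C\eps$ for $r<s$; averaging in $r\in(0,t)$ yields $\sup_{t<s<T}E_k(s)\le\tfrac1t\int_0^t E_k\dd r+C\eps\le C\eps(1+\tfrac1t)$. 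Finally, integrating $\tfrac{d}{ds}E_k+c_0D_k\le CE_k+R$ from $t$ to $T$ gives $c_0\int_t^T D_k\dd s\le E_k(t)+C\eps\le C\eps(1+\tfrac1t)$, which is the claimed estimate.

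\textbf{Main obstacle.} The delicate part is the bookkeeping in Step 2: one must verify that \emph{every} variable–coefficient term loses at most three powers of the weight $\brak{v}$ relative to the degenerate dissipation $D_k=\int\brak{v}^{k-3}|\nabla^2 h|^2$ (so that Step 1 with $k+6$ moments closes the estimate), and that the two genuinely dangerous contributions are controlled: the reaction pieces of $f h^2$–type arising from $\Delta a[f]=-f$, which are tamed by the pointwise smallness $\|h\|_{L^\infty}\le\eps$ (and, where it appears, by separating $f=\mu+h$ and using that $\brak{v}^k\mu$ is bounded), and the zeroth–order operator $\nabla^2 a[f]$, which is unbounded on $L^\infty$ and must instead be absorbed through the weighted Calderón–Zygmund estimate combined with the propagated $L^1$–moments of $f$.
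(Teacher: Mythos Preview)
Your proposal is correct and follows essentially the same two--step strategy as the paper: a weighted $L^2$ energy estimate on $h$ (testing \eqref{eq:landau_h} with $\brak{v}^N h$), then a weighted $H^1$ estimate on $\nabla h$ (testing with $\brak{v}^k\partial_i h$ after differentiating, equivalently testing with $\partial_i(\brak{v}^k\partial_i h)$), followed by the same time--averaging trick to produce the $(1+1/t)$ factor. One small caveat: your invocation of weighted Calder\'on--Zygmund via ``$\brak{v}^{2\beta}$ is an $A_2$ weight'' only holds for $|2\beta|<3$, which is not enough for general $k$; the paper instead bounds the $\nabla^2 a[f]$ contributions by the unweighted estimate $\|\nabla^2 a[f]\|_{L^r}\le C\|f\|_{L^r}$ together with H\"older (with $r$ chosen in terms of $k$), which transfers the weight onto $|h|^2$ and then uses $\|h\|_{L^\infty}\le\eps$ and the $L^1_{k+6}$ moment---this is a minor repair and does not affect your overall argument.
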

\begin{proof}
As a first step towards $H^1$ regularity, we perform a weighted $L^2$ estimate by testing \eqref{eq:landau_h} with $\brak{v}^k h$ and integrating by parts.
\begin{equation*}
\begin{aligned}
    \frac{\dd}{\dd t}\int_{\R^3} \brak{v}^k h^2 + \int_{\R^3} \brak{v}^kA[f] \nabla h \cdot \nabla h &\le -\frac{1}{2}\int_{\R^3} \nabla\brak{v}^{k} \cdot A[f] \nabla h^2 \dd v  + \int_{\R^3} h^2\nabla \brak{v}^{k} \cdot \nabla a[f] \dd v\\
        &\quad + \frac{1}{2}\int_{\R^3} \brak{v}^{k}\nabla a[f] \cdot \nabla h^2 \dd v + \int_{\R^3} \brak{v}^k h(A[h]:\nabla^2\mu + h\mu) \dd v\\
        &= I_1 + I_2 + I_3 + I_4.
\end{aligned}
\end{equation*}
Recall, by conservation of mass, $\|f\|_{L^\infty_tL^1_v} = 1$ and since $f = h + \mu$, $\|f\|_{L^\infty_{t,v}} \le \eps + \|\mu\|_{L^\infty} \le 2$. It follows from Lemma \ref{lem:Alower} that there is a universal constant such that 
\begin{equation}\label{eq:coeffient_bound}
\|A[f]\|_{L^\infty} \le \|f\|_{L^1}^{2/3}\|f\|_{L^\infty}^{1/3} \le C \qquad \text{and} \qquad \|\nabla a[f]\|_{L^\infty} \le \|f\|_{L^1}^{1/3}\|f\|_{L^\infty}^{2/3} \le C.
\end{equation}
Note that the estimates do not depend on the sign of $f$ and therefore analogous estimates hold for $h$ as well.
From \eqref{eq:coeffient_bound}, we immediately bound $I_2$ as 
\begin{equation*}
|I_2| \le C(k)\int_{\R^3} h^2 \nabla \brak{v}^k \dd v \le C(k)\eps\|h\|_{L^1_k}.
\end{equation*}
For $I_3$, we integrate by parts and similarly obtain from \eqref{eq:coeffient_bound},
\begin{equation*}
|I_3| \le C(k)\int_{\R^3} h^2\abs{\nabla \brak{v}^{k}\cdot \nabla a[f] + \brak{v}^k f} \dd v \le C(k)\eps\norm{h}_{L^1_k}.
\end{equation*}
For $I_1$, we integrate by parts and use \eqref{eq:coeffient_bound} again to obtain
\begin{equation*}
\begin{aligned}
I_1 &= C\int_{\R^3} \nabla\brak{v}^{k} \cdot \left[\nabla \cdot \left(A[f] h^2\right) - \nabla a[f]h^2\right]  \dd v\\
    &\le C(k)\int_{\R^3} \left[\brak{v}^{k-2} + \brak{v}^{k-1}\right]h^2 \dd v \le C(k)\eps \norm{h}_{L^1_k}.
\end{aligned}
\end{equation*}
Finally, for $I_4$, we use \eqref{eq:coeffient_bound} for $h$ and find 
\begin{equation*}
    |I_4| \le C(k)\|A[h]\|_{L^\infty}\int_{\R^3} \brak{v}^k |h|\abs{\nabla^2\mu} \dd v + C(k)\int_{\R^3} h^2\brak{v}^{k}\mu \dd v \le C(k)\eps^{4/3} + C(k) \eps \norm{h}_{L^1_k}.
\end{equation*}
Summarizing, we have shown using the lower bound on $A[f]$ from Lemma \ref{lem:Alower}
\begin{equation*} 
\frac{\dd }{\dd t}\int_{\R^3} \brak{v}^k |h(t)|^2 \dd v + c_0(H)\int_{\R^3} \brak{v}^{k-3}|\nabla h|^2 \dd v \le C(k)\eps\int_{\R^3} \brak{v}^k |h| \dd v.
\end{equation*}
Therefore, integrating in time and using the linear-in-time growth bound on $L^1$-moments of $h$ from Lemma \ref{lem:moments_h}, we obtain
\begin{equation}\label{eqn:L2_estimate2}
\sup_{0 < t < T} \|h\|_{L^2_k}^2 + c_0(H)\int_{0}^{T}\|\nabla h(t)\|_{L^2_{k-3}}^2 \dd t \le C(k)\eps\norm{h}_{L^\infty_t L^1_k} T + \|h_{in}\|_{L^2_k}^2 \le C(\|h_{in}\|_{L^1_{k}}, H)\eps.
\end{equation}
As a second step towards higher regularity, we perform weighted $H^1_k$ estimates by testing \eqref{eq:landau_h} with \\
$\partial_{v_i}\left(\brak{v}^k\partial_{v_i}h\right)$ for $i \in \set{1,2,3}$ and integrating by parts to obtain
\begin{equation*}
\begin{aligned}
    \frac{\dd}{\dd t}\int_{\R^3} \brak{v}^k (\partial_{v_i}h)^2 &+ \int_{\R^3} \brak{v}^kA[f]\nabla\partial_{v_i}h \cdot \nabla\partial_{v_i}h \\
    &\le -\int_{\R^3} \nabla(\brak{v}^{k} \partial_{v_i}h) \cdot \partial_{v_i} A[f] \nabla h \dd v 
     - \int A[f]\partial_{v_i} h\nabla \partial_{v_i} h \nabla \langle v\rangle^k\\
    &\quad+ \int_{\R^3} \nabla (\brak{v}^{k} \partial_{v_i}h) \cdot \partial_{v_i}(\nabla a[f]h) \dd v + \int_{\R^3} \partial_{v_i}\left(\brak{v}^k \partial_{v_i} h\right)\left(A[h]:\nabla^2\mu + h\mu\right) \dd v \\
        &= J_1 + J_2 + J_3 +J_4.
\end{aligned}
\end{equation*}
Before we estimate $J_i, i \in \{1, \dots, 4\}$, we observe that Lemma \ref{lem:Alower} gives the following estimates on the coefficients $A[f]$ and $\nabla a[f]$ 
\begin{equation}\label{eq:coefficient_bound2}
    \|\nabla A[f]\|_{L^\infty_v} \le \|f\|_{L^1}^{1/3}\|f\|_{L^\infty}^{2/3} \le C \qquad \text{and} \qquad \|\nabla^2 a[f]\|_{L^r_v} \le \|f\|_{L^r} \le C \quad\text{where }1 < r < \infty.
\end{equation}
Since these estimates use only the order of decay of the kernel and do not rely upon the sign of $f$, analogous estimates hold for $A[h]$ and $\nabla a[h]$, where the smallness of $h$ gives an improvement (in powers of $\eps$) over the universal constant $C$.
Now, we split $J_1$ as 
\begin{equation*}
    J_1 = \int_{\R^3} \brak{v}^k\nabla \partial_{v_i} h \cdot \partial_{v_i}A[f]\nabla h \dd v + \int_{\R^3} \partial_{v_i}h\nabla\brak{v}^k \partial_{v_i} A[f] \nabla h = J_{11} + J_{12}.
\end{equation*}
For $J_{11}$, we use Young's inequality and \eqref{eq:coefficient_bound2} to conclude
\begin{equation*}
|J_{11}| \le \frac{c_0}{4}\int_{\R^3} \brak{v}^{k-3}|\nabla \partial_{v_i} h|^2 \dd v + C(k,H)\int_{\R^3} |\nabla h|^2 \brak{v}^{k+3}\dd v.
\end{equation*}
For $J_{12}$, we simply use \eqref{eq:coefficient_bound2} to obtain
\begin{equation*}
|J_{12}| \le C(k)\int_{\R^3} |\nabla h|^2 \brak{v}^{k-1} \dd v.
\end{equation*}
For $J_2$ we use \eqref{eq:coeffient_bound} and Young's inequality to get
\beqs
    J_2 \leq  \frac{c_0}{4}\int_{\R^3} \abs{\nabla \partial_{v_i} h}^2 \brak{v}^{k-3} \dd v + C(k,H)\int_{\R^3}\brak{v}^{k+3}\abs{\nabla h}^2\dd v.
\eeqs
Next, we split $J_3$ as
\begin{equation*}
J_3 = \int_{\R^3} \brak{v}^{k} \nabla \partial_{v_i}h \cdot \partial_{v_i}(\nabla a[f]h) \dd v + \int_{\R^3} \partial_{v_i}h\nabla\brak{v}^{k} \cdot \partial_{v_i}(\nabla a[f]h) \dd v = J_{31} + J_{32}.
\end{equation*}
For $J_{31}$, we use Young's inequality, \eqref{eq:coeffient_bound}, H\"older's inequality, and \eqref{eq:coefficient_bound2} with $r = \frac{2(k+6)}{3}$ to obtain
\begin{equation*}
\begin{aligned}
|J_{31}| &\le \frac{c_0}{4}\int_{\R^3} \abs{\nabla \partial_{v_i} h}^2 \brak{v}^{k-3} \dd v + C(k,H)\int_{\R^3}\brak{v}^{k+3} \left(|\nabla\partial_{v_i} a[f]|^2|h|^2 + |\nabla a[f]|^2|\partial_{v_i}h|^2 \right) \\
    &\le \frac{c_0}{4}\int_{\R^3} \abs{\nabla \partial_{v_i} h}^2 \brak{v}^{k-3} \dd v + C(k,H)\eps^{\frac{k+9}{k+6}}\|h\|_{L^1_{k+6}}^{\frac{k+3}{k+6}} + C(k,H) \int_{\R^3} \brak{v}^{k+3}|\nabla h|^2 \dd v.
\end{aligned}
\end{equation*}
For $J_{32}$, we use Young's inequality, \eqref{eq:coeffient_bound}, H\"older's inequality, and \eqref{eq:coefficient_bound2} with $r = \frac{k+6}{3}$ to obtain
\begin{equation*}
\begin{aligned}
|J_{32}| &\le C(k)\int_{\R^3} \brak{v}^{k-1}\left(|\nabla a[f]||\partial_{v_i}h|^2 + |\nabla\partial_{v_i}a[f]||h\partial_{v_i}h| \right) \\
    &\le C(k)\int_{\R^3} \brak{v}^{k-1}\abs{\nabla h}^2 \dd v + C(k)\int_{\R^3} \abs{\nabla h}^2 \brak{v}^{k+3} \dd v + C(k)\eps^{\frac{k+12}{k+6}}\|h\|_{L^1_{k+6}}^{\frac{k}{k+6}}.
\end{aligned}
\end{equation*}

Lastly, we split the linear terms in $J_4$ as
\begin{equation*}
J_4 = \int_{\R^3} \brak{v}^k\partial_{v_iv_i} h \left(A[h]:\nabla^2 \mu + \mu h\right) \dd v + \int_{\R^3} \partial_{v_i}\brak{v}^k\partial_{v_i}h \left(A[h]:\nabla^2\mu + h\mu\right) \dd v = J_{41} + J_{42}.
\end{equation*}
For $J_{41}$, we use Young's inequality, H\"older's inequality, \eqref{eq:coeffient_bound} for $h$, and the embedding $L^2_{3+} \embeds L^1$ so that
\begin{equation*}
\begin{aligned}
|J_{41}| &\le \frac{c_0}{4} \int_{\R^3} \brak{v}^{k-3} \abs{\nabla\partial_{v_i} h}^2 \dd v + C(k,H)\int_{\R^3} \abs{A[h]}^2 \brak{v}^{k+3}\abs{\nabla^2 \mu}^2 \dd v + C(k,H)\int_{\R^3} \brak{v}^{k+3}|\mu|^2 h^2 \dd v\\
    &\le \frac{c_0}{4} \int_{\R^3} \brak{v}^{k-3} \abs{\nabla\partial_{v_i} h}^2 \dd v + C(k,H)\eps^{4/3}\|h\|_{L^1_{3+}}^{2/3} + C(k,H)\eps^2.
\end{aligned}
\end{equation*}
For $J_{42}$, we use Young's inequality, \eqref{eq:coeffient_bound} for $h$, and $L^2_{3+} \embeds L^1$ to obtain
\begin{equation*}
\begin{aligned}
|J_{42}| &\le C(k)\int_{\R^3} \brak{v}^{k+3} \abs{\nabla h}^2 \dd v + C(k)\int_{\R^3} \brak{v}^k\abs{A[h]}^2\abs{\nabla^2 \mu}^2 \dd v + C(k)\int_{\R^3}\brak{v}^k \mu^2 h^2 \dd v\\
    &\le C(k)\int_{\R^3} \brak{v}^{k+3} \abs{\nabla h}^2 \dd v + C(k)\eps^{4/3}\|h\|_{L^1_{3+}}^{2/3} + C(k)\eps^2.
\end{aligned}
\end{equation*}
To summarize, using the lower bound for $A[f]$ from Lemma \ref{lem:Alower} and absorbing the highest order terms without a sign, we find
\begin{equation*}
\begin{aligned}
\frac{\dd}{\dd t}\int_{\R^3} &\brak{v}^k |\partial_{v_i}h|^2 \dd v + \frac{c_0(H)}{4}\int_{\R^3} \brak{v}^{k-3}\abs{\nabla \partial_{v_i}h} \dd v\\
    &\le C(k,H)\int_{\R^3} \brak{v}^{k+3}\abs{\nabla h}^2 \dd v + C(k,H)\eps \left(1 + \int_{\R^3} \brak{v}^{k+6}|h| \dd v\right)^{\max\left(\frac{2}{3}, \frac{k}{k+6},\frac{k+3}{k+6}\right)}.
\end{aligned}
\end{equation*}
After integrating over $(t_1,t_2)$ for $0 < t_1 < t < t_2 < T < 1$ and using the linear-in-time growth bound on $L^1$ moments of $h$ from Lemma \ref{lem:moments_h}, we have
\begin{equation*}
\begin{aligned}
\int_{\R^3} \brak{v}^k \abs{\nabla h(t_2)}^2 \dd v &+ \frac{c_0(H)}{4}\int_{t_1}^{t_2}\int_{\R^3} \brak{v}^{k-3}\abs{\nabla^2 h} \dd v \\
&\le \int_{\R^3} \brak{v}^k \abs{\nabla h(t_1)}^2 \dd v+ C(k,H)\int_{t_1}^{t_2}\int_{\R^3} \brak{v}^{k+3}\abs{\nabla h}^2 \dd v \dd s + C\left(k,H,\|h\|_{L^1_{k+6}}\right)\eps.
\end{aligned}
\end{equation*}
Therefore, taking a supremum over $t_2 \in (t,T)$, averaging over $t_1 \in (0,t)$, appealing to \eqref{eqn:L2_estimate2} with $k$ replaced by $k + 3$, we find
\begin{equation*}
\begin{aligned}
   \sup_{t < s < T} \int_{\R^3} \brak{v}^k \abs{\nabla h}^2 \dd v + \frac{c_0(H)}{4}\int_{t}^{T}\int_{\R^3} \brak{v}^{k-3}\abs{\nabla^2 h} \dd v \dd s \le C\left(k, H, \|h_{in}\|_{L^1_{k+6}}\right)\eps\left(1 + \frac{1}{t}\right),
\end{aligned}
\end{equation*}
which is the claimed estimate.
\end{proof}

\section{Global Existence for Rough Initial Data: Proof of Theorem \ref{thm:existence}}\label{sec:compactness}

In this section, we deduce Theorem \ref{thm:existence} from Proposition \ref{prop:smooth_data} via a compactness argument.

\begin{flushleft}
\underline{{\bf Step 1: Construction of approximating sequence}}
\end{flushleft}

Let us fix $p$, $m$, $H$, and $M$ as in the statement of Theorem \ref{thm:existence}. Then, we fix $\delta_0 = \delta_0(p,m,H,M)$ as defined in Proposition \ref{prop:smooth_data}. For simplicity, dependence of constants on these fixed parameters will be suppressed.

Now, we fix our initial datum $f_{in}:\R^3 \rightarrow \R^+$ as any profile $f_{in} \in L^1_m \cap L^p$ satisfying the normalization \eqref{eq:normalization} (recall this fixes the Maxwellian as $\mu$) and further satisfying the bounds
\begin{equation*}
\int_{\R^3} \brak{v}^m \abs{f_{in}(v) - \mu(v)} \dd v \le M \qquad \text{and} \int_{\R^3} f_{in}\abs{\log f_{in}} \le H.
\end{equation*}
Let us pick a sequence of Schwartz class functions $f_{in}^n \in \S(\R^3)$ that approximate $f_{in}$ and satisfy
\begin{itemize}
    \item Convergence in $L^1_m \cap L^p$, $$\lim_{n\rightarrow \infty} \norm{f_{in}^n - f_{in}}_{L^1_m} + \norm{f_{in}^n - f_{in}}_{L^p} = 0;$$
    \item Normalization \eqref{eq:normalization};
    \item and uniform control of $L^1$-moments, entropy and $L^p$-distance from equilibrium $$\int_{\R^3} \brak{v}^m \abs{f_{in}^n(v) - \mu(v)} \dd v \le M, \quad \int_{\R^3} f_{in}^n\abs{\log f_{in}^n} \le H, \quad \text{and} \quad \int_{\R^3}\abs{f^n_{in} - \mu}^p \dd v \le \delta_0.$$
\end{itemize}
From Proposition \ref{prop:smooth_data}, we obtain a corresponding sequence of global-in-time Schwartz class solutions to \eqref{eq:landau}, namely $f_n:[0,\infty) \times \R^3 \rightarrow \R^+$. 

\begin{flushleft}
\underline{{\bf Step 2: Uniform-in-$n$ estimates}}
\end{flushleft}

We recall that the ODE argument in Lemma \ref{lem:perthame}, the gain of $L^\infty$-regularity in Proposition \ref{prop:degiorgi}, and the $H^1$-regularity estimates in Lemma \ref{lem:higher_regularity} imply the existence of a short time interval $[0,T_0]$ on which we have the following uniform-in-$n$ control:
\begin{equation}\label{eq:short_time_estimates}
\begin{aligned}
    &\sup_{0 < t < T_0} \left( t^{\frac{1}{1+\gamma}}\|f_n(t)\|_{L^\infty(\R^3)}  + \|f_n(t)\|_{L^1_m} + \|f_n(t)\|_{L^p}\right) \le C, \\
    &\sup_{s< t < T_0} \|f_n\|_{H^1_k} \le C, \qquad\text{for each }s\in(0,T_0].
\end{aligned}
\end{equation}
Moreover, the long time behavior from Theorem \ref{thm:desvillettes_he_jiang} combined with the gain of regularity in Proposition \ref{prop:degiorgi} yields the estimates: for each $T_0 < s < \infty$,
\begin{equation}\label{eq:long_time_estimates}
    \sup_{T_0 < t < s} \|f_n(t)\|_{H^1_2} \le C(s) \quad \text{and} \quad \sup_{T_0 < t < s}\|f_n(t)\|_{L^\infty} \le C(s).
\end{equation}
In order to apply the Aubin-Lions lemma and conclude strong compactness of the family $\{f_n\}$, we note that the above control implies corresponding control on the time derivative via standard duality arguments. More precisely, for any $0 < s < t < \infty$ and any for $\Phi \in L^1(s,t,H^1)$,
\begin{equation}
\begin{aligned}
&\abs{\int_s^t\int_{\R^3} \Phi(\tau)\partial_t f_n(\tau) \dd v \dd \tau} \le \abs{\int_s^t\int_{\R^3} \nabla \Phi \cdot \left(A[f_n]\nabla f_n - \nabla a[f_n]f_n\right) \dd v \dd \tau}\\
    &\qquad\le \|\nabla \Phi\|_{L^1(s,t;L^2)}\left(\|A[f_n]\|_{L^\infty_{t,x}}\|\nabla f_n\|_{L^\infty(s,t;L^2)} + \|\nabla a[f_n]\|_{L^\infty_{t,x}} \|f_n\|_{L^\infty(s,t;L^2)}\right)\\
    &\qquad \le C(s,t)\|\Phi\|_{L^1(s,t;H^1)},
\end{aligned}
\end{equation}
where we have used the coefficient bounds in Lemma \ref{lem:Alower}. Therefore,
\begin{equation}\label{eq:long_time_derivative_bound}
\|\partial_t f_n\|_{L^\infty(s,t;H^{-1})} = \sup_{\|\Phi\|_{L^1(s,t;H^1)} = 1} \int_s^t\int_{\R^3}\Phi(\tau)\partial_t f_n(\tau) \dd v\dd \tau \le C(s,t).
\end{equation}
On the other hand, to find a bound on $\partial_t f_n$ which holds all the way up to time $0$, we integrate by parts and obtain for any $\Phi \in L^1(0,T_0;W^{2,p^\prime})$
\begin{equation*}
\int_{\R}\int_{\R^3} \partial_t f_n \Phi \dd v \dd t = \int_{\R}\int_{\R^3} \left(\nabla^2\Phi : A[f_n] f_n\right) + 2\nabla \Phi \cdot \nabla a[f_n]f_n \dd v\dd t.
\end{equation*}
Therefore, using H\"older's inequality, $A[f_n] \in L^\infty(0,T_0;L^\infty)$, $\nabla a[f_n] \in L^\infty(0,T_0;L^3)$, and the Sobolev embedding $\dot{W}^{1,r}(\R^3) \embeds L^{\frac{3r}{3-r}}(\R^3)$ for $1 \le r < 3$, we find
\begin{equation*}
\begin{aligned}
\int_{0}^{T_0}\int_{\R^3} \partial_t f_n \Phi \dd v \dd t &\le \|A[f_n]\|_{L^\infty_{t,v}}\|\nabla^2 \Phi\|_{L^1_tL^{p^\prime}_v}\|f\|_{L^\infty_t L^p_v} + 2\|\nabla \Phi\|_{L^1L^{\frac{3}{2}\frac{p}{p-3/2}}}\|\nabla a[f_n]\|_{L^\infty_tL^3_v}\|f_n\|_{L^\infty_tL^p_v}\\
    &\le C\|\nabla^2 \Phi\|_{L^1_tL^{\frac{p}{p-1}}_v}.
\end{aligned}
\end{equation*}
By duality, this gives the bound
\begin{equation}\label{eq:short_time_derivative_bound}
    \|\partial_t f_n\|_{L^\infty(0,T_0;W^{-2,p})} = \sup_{\norm{\Phi}_{L^1(0,T_0;W^{2,p^\prime})} = 1} \int_0^{T_0} \Phi \partial_t f_n \dd t \le C.
\end{equation}
Finally, from the short time smoothing estimates in \eqref{eq:short_time_estimates}, we see $\{f_n\}$ is bounded in $L^{(1 + \gamma)-}(0,T_0;L^\infty)$. Therefore, interpolation with the $L^\infty(0,T_0;L^1)$ bound yields
\begin{equation}\label{eq:short_time_weak}
\int_0^{T_0} \|f_n(t)\|_{L^2}^2 \dd t \le C.
\end{equation}

\begin{flushleft}
\underline{{\bf Step 3: Construction of the limit}}
\end{flushleft}

First, note that the Banach-Alaoglu theorem in $L^\infty_{loc}(\R^+;H^1_k)$ and $L^2(0,T_0;L^2)$ implies the existence of a weak-star limit $f:\R^+ \times \R^3 \rightarrow \R$ of the sequence $\{f_n\}$. Now, we use a diagonalization argument to combine the uniform-in-estimates from \eqref{eq:short_time_estimates}, \eqref{eq:long_time_estimates}, \eqref{eq:long_time_derivative_bound}, \eqref{eq:short_time_derivative_bound}, and \eqref{eq:short_time_weak} with the Aubin-Lions lemma, Banach-Alaoglu theorem, or Riesz's theorem to pick a subsequence (still denoted $f_n$) converging to $f$ in each of the following senses:
\begin{itemize}
\item Strongly in $C(s,t; L^2)$ for each $0 < s < t < \infty$;
\item weak-starly in $L^\infty(s,t; H^1_k) \cap W^{1,\infty}(s,t; H^{-1})$, for each $0 < s < t < \infty$;
\item weak-starly in $L^\infty(0,T_0; L^p) \cap W^{1,\infty}(0,T_0; W^{-2,p})$;
\item strongly in $C(0,T_0; W^{-1,p})$;
\item weakly in $L^2([0,T_0]\times \R^3)$; 
\item and pointwise almost everywhere in $[0,\infty)\times \R^3$.
\end{itemize}
The pointwise convergence first ensures that $f \ge 0$ almost everywhere. Moreover, combined with Fatou's lemma, $f\in L^\infty(0,\infty;L^1_2) \cap L^\infty_{loc}(0,\infty;L^1_m)$ and $f$ has bounded entropy. Now, for every time $0 \le s \le t < \infty$, we find 
\begin{equation}\label{eq:conservation}
\int_{\R^3} \begin{pmatrix} 1 \\ v \\ |v|^2\end{pmatrix} f_n(s) = \int_{\R^3} \begin{pmatrix} 1 \\ v \\ |v|^2\end{pmatrix} f_n(t).
\end{equation}
Now, recall $f_n \rightarrow f$ pointwise almost everywhere and for almost every $t$, $\|f_n(t)\|_{L^1_m \cap L^2} \le C(t)$. So, for almost every $t$, the fixed time profiles $f_n(t)$ are tight and uniformly integrable and converge pointwise to $f(t)$. The Vitali convergence theorem implies \eqref{eq:conservation} holds for $f$ as well, i.e. $f$ conserves mass, momentum, and energy. Moreover, the same argument implies $f$ has decreasing entropy and satisfies the normalization \eqref{eq:normalization}. Finally, note that the pointwise convergence implies $f$ satisfies the same short-time smoothing estimates as $f_n$.

\begin{flushleft}
\underline{{\bf Step 4: The limit is instantaneously a strong solution to Landau}}
\end{flushleft}

We now show that the limit $f$ satisfies the Landau equation \eqref{eq:landau} in the appropriate sense. Because we have strong compactness in $L^\infty_{loc}(\R^+; L^2)$, passing to the limit in the nonlinear, nonlocal terms is simple. Indeed, for any $\Phi \in L^2(s,t;H^1)$, since $f_n$ solves \eqref{eq:landau}, we find
\begin{equation*}
\int_s^t\int_{\R^3} \Phi \partial_t f_n \dd v \dd \tau= -\int_s^t\int_{\R^3} \nabla \Phi \cdot \left(A[f_n]\nabla f_n - \nabla a[f_n]f_n\right) \dd v \dd \tau.
\end{equation*}
Analyzing the convergence in the top order terms, we have
\begin{equation}\label{eq:Aconvergence1}
\begin{aligned}
    \abs{\int_s^t\int_{\R^3} \nabla \Phi \cdot \left(A[f_n]\nabla f_n - A[f]\nabla f\right) \dd v\dd \tau} &\le \abs{\int_s^t\int_{\R^3} \nabla \Phi \cdot A[f_n - f]\nabla f_n  \dd v\dd \tau}\\
        &\qquad + \abs{\int_s^t\int_{\R^3} \nabla \Phi \cdot A[f]\left(\nabla f_n - \nabla f\right) \dd v\dd \tau}.
\end{aligned}
\end{equation}
For the first term on the right hand side of \eqref{eq:Aconvergence1}, we find
\begin{equation*}
\begin{aligned}
\abs{\int_s^t\int_{\R^3} \nabla \Phi \cdot A[f_n - f]\nabla f_n \dd v\dd \tau} &\le \|\nabla \Phi\|_{L^2_{t,x}}\|\nabla f_n\|_{L^2_{t,x}}\|A[f - f_n]\|_{L^\infty_{t,x}}\\
    &\le C(s,t)\|f-f_n\|_{L^\infty(s,t;L^1)}^{\frac{1}{3}}\|f-f_n\|_{L^\infty(s,t;L^2)}^{\frac{2}{3}},
\end{aligned}
\end{equation*}
which converges to $0$ by conservation of mass and strong compactness in $L^\infty(s,t;L^2)$.
The second term on the right hand side of \eqref{eq:Aconvergence1} converges to $0$ because $f_n \rightarrow f$ weakly in $L^2(s,t;H^1)$ and $A[f]\nabla \Phi \in L^2(s,t;H^1)$.

Next, we analyze the lower order terms, namely,
\begin{equation}\label{eq:aconvergence1}
\begin{aligned}
\bigg|\int_s^t\int_{\R^3} &\nabla \Phi \cdot \left(f_n\nabla a[f_n] - f\nabla a[f]\right) \dd v\dd \tau\bigg|\\
    &\le \int_s^t\int_{\R^3} \abs{\nabla \Phi\cdot (f_n - f)\nabla a[f_n]} \dd v \dd\tau + \int_s^t\int_{\R^3} \abs{\nabla \Phi \cdot f\nabla a[f_n - f]} \dd v \dd\tau\\
    &\le \|\Phi\|_{L^2(s,t;H^1)}\left(\|f_n - f\|_{L^2(t,s;L^2)}\|\nabla a[f_n]\|_{L^\infty_{t,x}} + \|f\|_{L^2_{t,x}}\|\nabla a[f_n -f]\|_{L^\infty_{t,x}}\right)
\end{aligned}
\end{equation}
and the right hand side converges to $0$ as $n \rightarrow \infty$. Finally, using $\partial_t f_n \weak \partial_t f$ in $L^2(s,t;H^{-1})$, we find $f$ satisfies Landau in the sense that for any $0 < s < t < \infty$ and any $\Phi \in L^2(s,t;H^1)$,
\begin{equation}\label{eq:landau_strong}
\int_s^t\int_{\R^3} \Phi \partial_t f \dd v \dd \tau= -\int_s^t\int_{\R^3} \nabla \Phi \cdot \left(A[f]\nabla f - \nabla a[f]f\right) \dd v \dd \tau.
\end{equation}
We have shown $f$ is a weak solution to \eqref{eq:landau} on $(0,\infty)\times \R^3$ in the sense of \eqref{eq:landau_strong}. It remains to show that $f \in C^1(0,\infty;C^2_v)$, so that $f$ is a classical solution for positive times.

However, improving $f\in L^\infty_{t,v}$ to classical regularity for $f$ is relatively standard in the literature. For the sake of completeness, we provide a sketch of one possible argument following \cite{GGZ} and refer the reader to \cite[Lemma 4.2 and Lemma 4.3]{GGZ} to fill in the omitted details.
We note that $f$ belongs to $L^\infty(s,t;L^\infty)$ and the weak formulation \eqref{eq:landau_strong} is sufficiently strong to allow us to use $f$ as a test function. This allows us to perform weighted $H^k$ estimates and obtain for $k \in \R$, $\ell \in \mathbb{N}$, and $0 < s  < t < \infty$:
\begin{equation}\label{eq:higher_regularity}
    \sup_{s < \tau < t} \|\nabla^\ell f\|_{L^2_k}^2 + \int_s^t \|\nabla^{\ell + 1} f\|_{L^2_{k-3}} \dd \tau \le C\left(t,H,\|f_{in}\|_{L^1_{k+6\ell}}, \|f\|_{L^\infty(s,t;L^\infty)}\right) \left(1 + \frac{1}{s}\right)^{\ell}.
\end{equation}
The estimates for $\ell=0$ and $\ell=1$ follow from the proof of Lemma \ref{lem:higher_regularity}, while the estimates for $\ell\ge 2$ are explained thoroughly in \cite[Lemma 4.2]{GGZ}. From \eqref{eq:higher_regularity} with $\ell = 2$ and $k = 0$ and the Sobolev embedding $H^2 \embeds W^{1,6}$, we find
\begin{equation*}
\norm{f}_{L^\infty(s,t;W^{1,6})} \le C\left(s,t,\|f_{in}\|_{L^1_{12}}\right) \qquad \text{and} \qquad \norm{\partial_t f}_{L^\infty(s,t;W^{-1,6})} \le C\left(s,t,\|f_{in}\|_{L^1_{12}}\right),
\end{equation*}
after using a duality argument to bound the time derivative like in \eqref{eq:short_time_derivative_bound}. Real interpolation of (vector-valued) Sobolev spaces implies 
\begin{equation*}
    \norm{f}_{W^{\theta, 6}(s,t;W^{(1-2\theta)-,6})} \le C(s,t), \qquad \text{ for any }\theta \in (0,1).
\end{equation*}
Picking $1/6 < \theta < 1/4$, Morrey's inequality implies $f\in C^{0,\alpha/2}(s,t;C^{0,\alpha})$ for some $\alpha\in (0,1)$. Because $f$ is globally (in velocity) H\"older with exponent $\alpha$, the non-local coefficients $A[f]$ and $\nabla a[f]$ lie in the same H\"older space as $f$. Thus, \eqref{eq:landau} is now a divergence form parabolic equation with H\"older continuous coefficients. Since the lower bound for $A[f]$ still degenerates like $\brak{v}^{-3}$ for large velocities, performing classical parabolic Schauder estimates on compact subsets of $(s,t)\times \R^3$ yields $f \in C^{1,\alpha/2}(s,t;C^{2,\alpha}_{loc})$, which implies $f$ is a classical solution to \eqref{eq:landau}.

\begin{flushleft}
\underline{{\bf Step 5: Behavior at Initial Time}}
\end{flushleft}

It remains only to address sense in which $f$ satisfies \eqref{eq:landau} up to time $t = 0$ and the sense in which the initial datum $f_{in}$ is obtained. For $\varphi \in C^\infty_c([0,\infty) \times \R^3)$, we integrate by parts in \eqref{eq:landau} for $f_n$ and use $\nabla \cdot A = \nabla a$, to obtain
\begin{equation}\label{eq:landau_weakn}
\int_{\R^3} f_n(0)\varphi(0) \dd v - \int_{\R}\int_{\R^3} f_n\partial_t \varphi \dd v \dd t = \int_{\R}\int_{\R^3} \left(\nabla^2\varphi : A[f_n] f_n\right) + 2\nabla \varphi \cdot \nabla a[f_n]f_n \dd v\dd t.
\end{equation}
We use $f_n(0) \rightarrow f_{in}$ in $L^p$ by construction and $f_n \weak f$ in $L^2(0,T;L^2)$ to conclude that the left hand side of \eqref{eq:landau_weakn} converges to
\begin{equation*}
\int_{R^3} f_{in}\varphi(0) \dd v - \int_{\R}\int_{\R^3} f\partial_t \varphi \dd v \dd t.
\end{equation*}
On the other hand, say $\varphi$ is supported in the time interval $[0,t]$. Then, splitting the time integral on the right hand side of \eqref{eq:landau_weakn} into $[0,s]$ and $[s,t]$, and using $f$ is a classical solution to Landau on $[s,t]$, 
we find that for any $s > 0$,
\begin{equation*}
\begin{aligned}
\int_{\R^3} f_{in}\varphi(0) \dd v - \int_{\R}\int_{\R^3} f\partial_t \varphi \dd v \dd t &= \int_s^\infty\int_{\R^3} \left(\nabla^2\varphi : A[f] f\right) + 2\nabla \varphi \cdot \nabla a[f]f \dd v\dd t\\
    &\qquad+ \lim_{n\rightarrow \infty}\int_0^s\int_{\R^3} \left(\nabla^2\varphi : A[f_n] f_n\right) + 2\nabla \varphi \cdot \nabla a[f_n]f_n \dd v\dd t.
\end{aligned}
\end{equation*}
Taking $s\rightarrow 0^+$, it suffices to show that
\begin{equation*}
\lim_{s\rightarrow 0^+} \sup_n \int_0^s\int_{\R^3} \left(\nabla^2\varphi : A[f_n] f_n\right) + 2\nabla \varphi \cdot \nabla a[f_n]f_n \dd v\dd t = 0.
\end{equation*}
First, using Lemma \ref{lem:Alower} and $f_n \weak f$ in $L^2(0,T_0;L^2)$, we have
\begin{equation*}
\int_0^s \nabla^2\varphi : A[f_n]f_n \le s^{1/2}\|\nabla^2\varphi_{L^\infty(0,T_0;L^2)}\|A[f_n]\|_{L^\infty(0,T_0;L^\infty)}\|f_n\|_{L^2(0,T_0;L^2)} \le Cs^{1/2}\|\varphi\|_{L^\infty(0,T_0;H^2)},
\end{equation*}
which evidently converges to $0$ as $s\rightarrow 0^+$, uniformly in $n$. Second, using Lemma \ref{lem:Alower} and $f_n\weak f$ in $L^2(0,T_0;L^2)$ again,
\begin{equation*}
\begin{aligned}
\int_0^s\|\nabla \varphi \cdot \nabla a[f_n]f_n\|_{L^1} \dd t &\le \int_0^s\|\nabla \varphi\|_{L^6}\|\nabla a[f_n]\|_{L^3}\|f_n\|_{L^2} \dd t \le Cs^{1/2}\|\varphi\|_{L^\infty(0,T_0;H^2)},
\end{aligned}
\end{equation*}
which evidently converges to $0$ as $s \rightarrow 0^+$. Thus, by the Lebesgue dominated convergence theorem, $f$ satisfies Landau in the sense that for each $\varphi \in C^\infty_c([0,\infty)\times \R^3)$,
\begin{equation}\label{eq:landau_weak}
\int_{\R^3} f_{in}\varphi(0) \dd v - \int_{\R}\int_{\R^3} f\partial_t \varphi \dd v \dd t = \int_{\R}\int_{\R^3} \left(\nabla^2\varphi : A[f] f\right) + 2\nabla \varphi \cdot \nabla a[f]f \dd v\dd t.
\end{equation}

It remains only to show that the initial datum is obtained strongly in $L^1 \cap L^p$. First, we note that $f \in C(0,T_0; W^{-1,p})$ and so $f$ is certainly continuous in the sense of distributions. Now, taking test functions in \eqref{eq:landau_weak} of the form $\psi(t)\varphi(v)$ for $\psi$ a smooth approximation of $\chi_{[0,s]}(t)$ and $\varphi \in C^\infty_c(\R^3)$, and using the monotone convergence theorem we find for any $0 < s$ and $\varphi\in C^\infty_c(\R^3)$,
\begin{equation}\label{eq:landau_weak2}
\int_{\R^3} f_{in}\varphi \dd v - \int_{\R^3} f(s)\varphi \dd v = \int_{0}^s\int_{\R^3} \left(\nabla^2\varphi : A[f] f\right) + 2\nabla \varphi \cdot \nabla a[f]f \dd v\dd t.
\end{equation}
Since the right hand side of \eqref{eq:landau_weak2} converges to $0$ as $s\to 0$ for any $\varphi\in C^\infty_c$, we conclude that
$$\lim_{s\rightarrow 0^+} \int_{\R^3} \varphi f(s) \dd v = \int_{\R^3} \varphi f_{in}\dd v, \qquad \text{for each }\varphi\in C^\infty_c(\R^3).$$
Because $f$ is continuous in the sense of distributions, $f(0) = f_{in}$, and the initial datum is obtained in the sense of distributions. By a density argument using $f\in L^\infty(0,T_0; L^p)$, the initial datum $f_{in}$ is obtained weakly in $L^p$ so that by weak lower semi-continuity of norms, we obtain
\begin{equation*}
\|f_{in}\|_{L^p} \le \liminf_{s\rightarrow 0^+} \|f(s)\|_{L^p}.
\end{equation*}
Next, analyzing the evolution of the $L^p$ norms of the smooth solutions $f_n$, we find the simple estimate,
\begin{equation*}
\begin{aligned}
\frac{\dd}{\dd t} \int f_n^p &= - \int \nabla f_n^{p-1} \cdot \left(A[f_n]\nabla f_n - \nabla a[f_n]f_n\right) \dd v\\
    &\le -C(p)\int \nabla f_n^{p/2} \cdot A[f_n]\nabla f_n^{p/2} \dd v +  C(p)\int_{\R^3} f_n^{p+1} \dd v\\
    & \le \|f_n\|_{L^p}^p\|f_n\|_{L^\infty}.
\end{aligned}
\end{equation*}
Integrating in time, for each $0\le t \le T_0$,
\begin{equation}\label{eq:short_time_init_data}
\|f_n(t)\|_{L^p}^p \le \|f_n(0)\|_{L^p}^p + \|f_n\|_{L^1(0,t;L^\infty)}\|f_n\|_{L^\infty(0,T_0;L^p)}^p.
\end{equation}
Now taking the limit as $n\rightarrow \infty$ in \eqref{eq:short_time_init_data}, we find for each $0 < t < T_0$,
\begin{equation*}
\|f(t)\|_{L^p}^p \le \|f_{in}\|_{L^p}^p + \|f\|_{L^1(0,t;L^\infty)}\|f\|_{L^\infty(0,T_0;L^p)}^p.
\end{equation*}
Since $f \in L^1(0,T_0;L^\infty)$, taking $t \rightarrow 0^+$, we find
\begin{equation*}
\limsup_{t \rightarrow 0^+} \|f(t)\|_{L^p} \le \|f_{in}\|_{L^p}.
\end{equation*}
Combining $f(t) \weak f_{in}$ in $L^p$  with $\|f(t)\|_{L^p} \rightarrow \|f_{in}\|_{L^p}$ implies strong convergence in $L^p$. Finally, $f\in C(0,T_0;L^p)$ and $f\in L^\infty(0,T_0;L^1_m)$ for $m > 2$ guarantees $f\in C(0,T_0;L^1_2 \cap L^p)$.

\appendix
\section{Weighted Sobolev Inequalities}\label{appendix}
In this section, we provide a brief proof of the Sobolev inequalities claimed in Lemma \ref{lem:poincare}.
Weighted Sobolev and Poincare inequalities of the form
\begin{align*}
    \left( \int_{Q}|\phi|^qw_1\;dv\right)^{1/q}\leq C\left( \int_{Q} |\nabla \phi|^p\;w_2dv\right)^{1/p},
\end{align*} 
for $q\geq p \ge 1$, and $\phi$ either compactly supported in $Q$ or with zero average over $Q$, are guaranteed to hold if certain averages involving $w_1$ and $w_2$ are bounded over all cubes contained in a cube slightly larger than $Q$. For $p=2$ and $2\leq q<\infty$, $r>1$, a cube $Q$, weights $w_1$ and $w_2$, we define
\begin{align*}
    \sigma_{q,2,r}(Q,w_1,w_2) := |Q|^{\frac{1}{d}-\frac{1}{2}+\frac{1}{q}}\left (\fint_{Q}w_1^r\;dv \right )^{\frac{1}{qr}}\left (\fint_{Q}w_2^{-r}\;dv \right )^{\frac{1}{2r}}.
\end{align*}
The precise statement is summarized in the following theorem. For its proof, we refer to \cite[Theorem 1]{SW}.
\begin{theorem}\label{thm:local weighted Sobolev inequalities} 
  Consider two nonnegative  weights $w_1$, $w_2$, a cube $Q$, $2\leq q<\infty$ and $r>1$. Then, for any Lipschitz function $\phi$ which has compact support in  $Q$ or is such that $\int_Q \phi\;dv = 0$, we have
  \begin{align}\label{eqn:Local Weighted Sobolev Inequality General Weights}
    \left( \int_{Q}|\phi|^qw_1\;dv\right)^{1/q}\leq   \mathcal{C}_{q,2,r}(Q,w_1,w_2)\left( \int_{Q}|\nabla \phi|^2\; w_2\;dv\right)^{1/2},
  \end{align} 
  where for some constant $C(d,r,q)$,
  \begin{align*}
    \mathcal{C}_{q,2,r}(Q,w_1,w_2) := C(d,r,q)\sup \limits_{Q' \subset 8Q} \sigma_{q,2,r}(Q',w_1,w_2).
  \end{align*}
  \end{theorem}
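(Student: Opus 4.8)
\textbf{Proof proposal for Lemma \ref{lem:poincare}.}

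The plan is to derive the stated global inequality on $\R^3$ from the local weighted Sobolev inequality of Theorem \ref{thm:local weighted Sobolev inequalities} by a covering argument, with $q = 6$, $d = 3$, $w_1(v) = \brak{v}^{-9}$, $w_2(v) = \brak{v}^{-3}$. First I would check that the relevant weight functionals $\sigma_{6,2,r}(Q',w_1,w_2)$ are bounded uniformly over all cubes $Q'$ in $\R^3$ for a suitable choice of $r > 1$. Since $\brak{v}$ is comparable to a constant times $\max(1,|v|)$ and varies slowly, on a cube $Q'$ of sidelength $\rho$ centered at $v_0$ one has $\fint_{Q'}\brak{v}^{-9r}\,dv \lesssim \brak{v_0}^{-9r}$ when $\rho \lesssim \brak{v_0}$ and $\fint_{Q'}\brak{v}^{-9r}\,dv \lesssim \rho^{-9r}\cdot(\text{something integrable})$ for large cubes; similarly for $w_2^{-r} = \brak{v}^{3r}$, where $\fint_{Q'}\brak{v}^{3r}\,dv \lesssim \brak{v_0}^{3r} + \rho^{3r}$. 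Plugging these into
$$
\sigma_{6,2,r}(Q',w_1,w_2) = |Q'|^{\frac13 - \frac12 + \frac16}\left(\fint_{Q'}\brak{v}^{-9r}\right)^{\frac{1}{6r}}\left(\fint_{Q'}\brak{v}^{3r}\right)^{\frac{1}{2r}} = \left(\fint_{Q'}\brak{v}^{-9r}\right)^{\frac{1}{6r}}\left(\fint_{Q'}\brak{v}^{3r}\right)^{\frac{1}{2r}},
$$
the volume factor cancels (as $\frac13 - \frac12 + \frac16 = 0$), and a short case analysis (small cube near $v_0$ large or small; large cube) shows the product is $O(1)$, uniformly in $Q'$, for any fixed $r \in (1, \infty)$; pick e.g. $r = 2$. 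This boundedness handles the $|\nabla g|^2$ term but only on a single bounded region after subtracting the average; the global statement and the role of the corrector term $\|g\|_{L^s}$ must come from a partition of unity.

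Next I would set up a Whitney-type decomposition of $\R^3$ into dyadic annuli $\{2^{j} \le |v| \le 2^{j+1}\}$ (together with a bounded central ball), and on each annulus further subdivide into finitely many cubes $Q$ of sidelength comparable to $2^j$, so that $\brak{v} \sim 2^j$ on each such $Q$ (with a uniformly bounded number of cubes per annulus, by scaling). On each cube $Q$ in this decomposition, write $g = (g - \brak{g}_Q) + \brak{g}_Q$ where $\brak{g}_Q = \fint_Q g$. Apply Theorem \ref{thm:local weighted Sobolev inequalities} to $\phi = g - \brak{g}_Q$ (which has zero average over $Q$) to obtain
$$
\left(\int_Q |g - \brak{g}_Q|^6\brak{v}^{-9}\,dv\right)^{1/3} \le C\left(\int_Q |\nabla g|^2\brak{v}^{-3}\,dv\right),
$$
using the uniform bound on $\sigma$ above (note the exponent: raising \eqref{eqn:Local Weighted Sobolev Inequality General Weights} to the power $2$ with $q=2$ on the left-hand side... actually with $q=6$, squaring gives $\left(\int|\phi|^6 w_1\right)^{1/3}$ on the left and $\int|\nabla\phi|^2 w_2$ on the right, as written). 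For the constant part, $\int_Q |\brak{g}_Q|^6 \brak{v}^{-9}\,dv \sim 2^{-9j}\,2^{3j}\,|\brak{g}_Q|^6 = 2^{-6j}|\brak{g}_Q|^6$, and by Jensen/Hölder $|\brak{g}_Q| \le |Q|^{-1/s}\|g\|_{L^s(Q)} \sim 2^{-3j/s}\|g\|_{L^s(Q)}$, so $\int_Q |\brak{g}_Q|^6\brak{v}^{-9} \lesssim 2^{-6j}\cdot 2^{-18j/s}\|g\|_{L^s(Q)}^6 \le \|g\|_{L^s(Q)}^6$ since $j \ge 0$ on the annuli and the exponent of $2$ is negative. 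Using $|a+b|^6 \le 2^5(|a|^6 + |b|^6)$ and then summing over the cubes $Q$ in the decomposition: $\sum_Q \|g\|_{L^s(Q)}^6 \le \left(\sum_Q \|g\|_{L^s(Q)}^s\right)^{6/s} = \|g\|_{L^s(\R^3)}^6$ (valid since $6 \ge s$, using $\ell^s \subset \ell^6$ summability), while $\sum_Q \int_Q |\nabla g|^2\brak{v}^{-3} = \int_{\R^3}|\nabla g|^2\brak{v}^{-3}$ (bounded overlap), and $\sum_Q \int_Q |g|^6\brak{v}^{-9} = \int_{\R^3}|g|^6\brak{v}^{-9}$. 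This yields exactly the claimed inequality with $C_1, C_2$ depending only on $s$ (the dependence on $s$ enters through the constant $C(d,r,q)$ in Theorem \ref{thm:local weighted Sobolev inequalities}, the $\ell^s \hookrightarrow \ell^6$ embedding constant, and the number of cubes per annulus, all of which are universal given $d=3$, $q=6$).

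The main obstacle I anticipate is the uniform estimate on $\sigma_{6,2,r}(Q',w_1,w_2)$ over \emph{all} cubes, not just those in the Whitney decomposition — Theorem \ref{thm:local weighted Sobolev inequalities} requires the supremum over $Q' \subset 8Q$, so cubes $Q$ deep in the decomposition have their sup taken over cubes $Q'$ of comparable or smaller size centered nearby, but one must make sure no cube $Q'$ (e.g. one straddling the origin, or one very large) produces an unbounded $\sigma$. The key cancellation making this work is the vanishing of the exponent $\frac1d - \frac12 + \frac1q = \frac13 - \frac12 + \frac16 = 0$ for our parameters, which decouples the estimate from $|Q'|$; the remaining verification that $\left(\fint_{Q'}\brak{v}^{-9r}\right)^{1/6r}\left(\fint_{Q'}\brak{v}^{3r}\right)^{1/2r}$ is $O(1)$ is then a routine but slightly fiddly case analysis on the position and size of $Q'$ relative to the origin, exploiting that $\brak{v}^{-9}$ is integrable on $\R^3$ (since $9 > 3$) so that the $w_1$-average cannot blow up, and that $\brak{v}^{3r}$ grows polynomially so its average over $Q'$ is controlled by $(\text{diam}\,Q' + \text{dist}(0,Q'))^{3r}$ up to constants. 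A secondary, purely technical point is that Lemma \ref{lem:poincare} is stated for Schwartz $g$, so no approximation argument is needed and all integrals and sums are finite a priori, which legitimizes the interchange of summation and integration above.
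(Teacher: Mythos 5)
The central problem is that your proposal does not prove the statement at issue. The statement to be proved is Theorem \ref{thm:local weighted Sobolev inequalities} itself --- the two-weight local Sobolev inequality whose constant is governed by $\sup_{Q'\subset 8Q}\sigma_{q,2,r}(Q',w_1,w_2)$ --- whereas your argument takes that theorem as a black box and uses it, via a covering argument, to deduce the global inequality of Lemma \ref{lem:poincare}. As a proof of the theorem this is circular (you assume exactly what is to be shown); as a piece of mathematics it is instead a proof of the downstream lemma, and in that role it essentially reproduces the paper's Appendix argument (the paper partitions $\R^3$ into unit cubes rather than your dyadic, Whitney-type cubes, but the structure --- subtract the cube average, apply the local inequality, control the averages by Jensen, sum over the cubes --- is the same). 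Note that the paper does not prove Theorem \ref{thm:local weighted Sobolev inequalities} either: it cites \cite[Theorem 1]{SW}. A genuine proof would have to go through the sub-representation formula $|\phi(v)-(\phi)_Q|\le C\, I_1\big(\chi_Q|\nabla\phi|\big)(v)$ for a.e.\ $v\in Q$ (and $|\phi|\le C I_1(\chi_Q|\nabla\phi|)$ when $\phi$ is compactly supported in $Q$), followed by a two-weight norm inequality for the fractional integral $I_1$ under the integral bump condition encoded in $\sigma_{q,2,r}$; none of that machinery appears in your proposal.

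A secondary point, relevant if your text is read as a proof of Lemma \ref{lem:poincare}: the claim that $\sigma_{6,2,r}(Q',\brak{v}^{-9},\brak{v}^{-3})$ is $O(1)$ uniformly over \emph{all} cubes $Q'$ is false. For a cube of sidelength $\rho\ge 1$ centered at the origin one has $\fint_{Q'}\brak{v}^{-9r}\,dv\approx \rho^{-3}$ and $\fint_{Q'}\brak{v}^{3r}\,dv\approx\rho^{3r}$, so $\sigma_{6,2,r}(Q')\approx \rho^{\frac{3}{2}-\frac{1}{2r}}\to\infty$ as $\rho\to\infty$. This is harmless in your scheme only if the Whitney constant is taken small enough that $8Q$ remains inside a region where $\brak{v}\sim 2^{j}$ for every decomposition cube $Q$ at scale $2^{j}$, so that the supremum in the theorem ranges only over cubes on which both weights are essentially constant; the paper sidesteps the issue entirely by using cubes of sidelength $1$, for which the required comparability of $\brak{v}$ on $8Q$ is immediate. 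Repairing this detail, however, does not close the principal gap: the theorem you were asked to prove remains unproven in your write-up.
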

We are now prepared to show Lemma \ref{lem:poincare} by combining Theorem \ref{thm:local weighted Sobolev inequalities} with a covering argument.
\begin{flushleft}
{\bf \underline{Proof of Lemma \ref{lem:poincare}}}
\end{flushleft}
Let $Q = Q(v_0)$ be a cube of side length $1$, centered at $v_0$. For any $\alpha,\; m > 0$, we compute 
\begin{align*}
  \frac{1}{|Q|}\int_{Q} {(1+|v|)^{m}}\;\dd v  \leq 
  \left \{ \begin{array}{rl}
 2^m (1+|v_0|)^m ,  & v_0\in {B_{2}(0)}^c,\\	 
   {}\\
   c,  \hspace{1cm}& v_0\in {B_{2}(0)},
   \end{array}\right.
  \end{align*}
  and 
   \begin{align*}
  \frac{1}{|Q|}\int_{Q} \frac{1}{(1+|v|)^\alpha}\;\dd v  \leq 
  \left \{ \begin{array}{rl}
   \frac{2}{(1+|v_0|)^\alpha},& v_0\in {B_{2}(0)}^c,\\
   {}\\
   c,  \hspace{1cm}& v_0\in {B_{2}(0)}.
   \end{array}\right.
  \end{align*}
Therefore,
\begin{align*}
  \sigma_{6,2,r}(Q,\langle v \rangle^{-9},\langle v \rangle^{-3}) = \left (\fint_{Q}\langle v \rangle^{-9r}\;\dd v \right )^{\frac{1}{6 r}}\left (\fint_{Q}\langle v \rangle^{3r}\;\dd v \right )^{\frac{1}{2r}} \le C,
\end{align*}
where $C$ is independent on the center of $Q$. Applying Theorem \ref{thm:local weighted Sobolev inequalities}, we find (\ref{eqn:Local Weighted Sobolev Inequality General Weights}) holds for $w_1  = \langle v \rangle^{-9}$ and $w_2 =\langle v \rangle^{-3}$:
\begin{align}\label{eqn:Local Weighted Sobolev Inequality Landau weights}
    \left( \int_{Q}|\phi - (\phi)_Q|^6 \langle v \rangle^{-9}\;\dd v\right)^{1/3}\leq   C \int_{Q}|\nabla \phi|^2\;  \langle v\rangle^{-3}\;\dd v.
\end{align} 
Here and below $(\phi)_Q$ denotes the average of $\phi$ in $Q$. 

Next, we consider a family $\mathcal{Q}$ of non-overlapping cubes of side length $1$ which cover $\mathbb{R}^3$. Decomposing the integral over $\R^3$ using this partition and Young's inequality, we have
\begin{align*}	
    \int_{\mathbb{R}^3}|\phi|^6\brak{v}^{-9}\;dv = \sum \limits_{Q\in\mathcal{Q}} \int_{Q}|\phi|^6 \brak{v}^{-9}\;dv
	&\leq c_1\sum \limits_{Q\in\mathcal{Q}} \int_{Q}|\phi-(\phi)_{Q}|^6 \brak{v}^{-9}\;\dd v + c_2\sum \limits_{Q\in\mathcal{Q}} |(\phi)_{Q}|^6 \int_{Q}\brak{v}^{-9}\;\dd v. 
\end{align*}
For each $Q\in\mathcal{Q}$, we apply \eqref{eqn:Local Weighted Sobolev Inequality Landau weights} and find 
\begin{align*}
    \sum \limits_{Q\in\mathcal{Q}} \int_{Q}|\phi-(\phi)_{Q}|^6 \brak{v}^{-9}\;\dd v &\le \sum \limits_{Q\in\mathcal{Q}} \int_{Q}|\phi-(\phi)_{Q}|^6 \brak{v}^{-9}\;\dd v\\
        &\leq  C \sum \limits_{Q\in\mathcal{Q}} \left(\int_{Q} |\nabla \phi|^2\brak{v}^{-3}\;\dd v \right)^{3}
        \le C \left (\int_{\mathbb{R}^3}|\nabla \phi|^2\brak{v}^{-3}\;\dd v \right )^{3}
\end{align*}
The last inequality follows by choice of $Q$
\begin{equation}\label{eq:sums}
\begin{aligned}
      \sum \limits_{Q\in\mathcal{Q}} \left(\int_{Q} |\nabla \phi|^2\brak{v}^{-3}\;\dd v \right)^{3} &\leq \left(\int_{\R^3} |\nabla \phi|^2\brak{v}^{-3}\;\dd v \right)^{2}\left( \sum \limits_{Q\in\mathcal{Q}} \int_{Q} |\nabla \phi|^2\brak{v}^{-3}\;\dd v\right) \\
      &=\left(\int_{\R^3} |\nabla \phi|^2\brak{v}^{-3}\;\dd v \right)^{2} \left(\int_{\R^3} |\nabla \phi|^2\brak{v}^{-3}\;\dd v \right).
\end{aligned}
\end{equation}
Additionally, since $\brak{v}^{-9} \le 1$ and $|Q| = 1$, for any $1\le s \le 6$, Jensen's inequality implies
\begin{align*}
\sum \limits_{Q\in\mathcal{Q}} |(\phi)_{Q}|^6 \int_{Q}\brak{v}^{-9}\;\dd v \le  \sum \limits_{Q\in\mathcal{Q}} |(\phi)_{Q}|^6  \le \sum \limits_{Q\in\mathcal{Q}} \left(\int_Q |\phi|^s \;\dd v\right)^{6/s} \le  \left( \int_{\mathbb{R}^3}  \abs{\phi}^s \;\dd v \right)^{6/s}.
\end{align*}
The last inequality follows by the same argument as in \eqref{eq:sums}. 
Summarizing, we obtain for any $1 \le s \le 6$,
\begin{align*}
    \int_{\mathbb{R}^3}|\phi|^6 \brak{v}^{-9}\;\dd v  \le  C_1 \left (\int_{\mathbb{R}^3}|\nabla \phi|^2 \brak{v}^{-3}\;\dd v \right )^{3} + C_2\left(  \int_{\mathbb{R}^3}  \abs{\phi}^s \;\dd v \right)^{6/s},
\end{align*}
as claimed.




\bibliographystyle{plain}
\bibliography{Landau}







\end{document}